\documentclass[openany]{now}
\usepackage{mathtools}
\usepackage{amssymb}
\usepackage{xfrac}
\usepackage{framed}

\title{A Primer on Reproducing Kernel Hilbert Spaces}

\author{
Jonathan H. Manton \\
The University of Melbourne \\
Victoria 3010 Australia \\
j.manton@ieee.org
\and
Pierre-Olivier Amblard \\
CNRS \\
Grenoble 38402 France \\
pierre-olivier.amblard@gipsa-lab.grenoble-inp.fr
}

\newcommand{\reals}{\mathbb{R}}
\newcommand{\X}{\mathbb{X}}
\newcommand{\cx}{\mathbb{C}}
\newcommand{\Hs}{\mathcal{H}}

\begin{document}

\frontmatter

\maketitle

\tableofcontents

\mainmatter

\begin{abstract}
Reproducing kernel Hilbert spaces are elucidated without assuming
prior familiarity with Hilbert spaces.  Compared with extant pedagogic
material, greater care is placed on motivating the definition of
reproducing kernel Hilbert spaces and explaining when and why these
spaces are efficacious.  The novel viewpoint is that reproducing
kernel Hilbert space theory studies extrinsic geometry, associating
with each geometric configuration a canonical overdetermined
coordinate system.  This coordinate system varies continuously with
changing geometric configurations, making it well-suited for studying
problems whose solutions also vary continuously with changing
geometry. This primer can also serve as an introduction to
infinite-dimensional linear algebra because reproducing kernel
Hilbert spaces have more properties in common with Euclidean spaces
than do more general Hilbert spaces.
\end{abstract}


\chapter{Introduction}
\label{ch:intro}

Hilbert space theory is a prime example in mathematics of a beautiful synergy
between symbolic manipulation and visual reasoning.  Two-dimensional and
three-dimensional pictures can be used to reason about infinite-dimensional
Hilbert spaces, with symbolic manipulations subsequently verifying the soundness
of this reasoning, or suggesting modifications and refinements. Visualising a
problem is especially beneficial because over half the human brain is involved
to some extent with visual processing.  Hilbert space theory is an invaluable
tool in numerous signal processing and systems theory
applications~\cite{Small:2011wg,Curtain:1995hv,Berlinet:2012uv}.

Hilbert spaces satisfying certain additional properties are known as Reproducing
Kernel Hilbert Spaces (RKHSs), and RKHS theory is normally described as a
transform theory between Reproducing Kernel Hilbert Spaces and positive
semi-definite functions, called kernels: every RKHS has a unique kernel, and
certain problems posed in RKHSs are more easily solved by involving the kernel. 
However, this description hides the crucial aspect that the kernel captures not
just intrinsic properties of the Hilbert space but also how the Hilbert space is
embedded in a larger function space, which is referred to here as its extrinsic
geometry. A novel feature of this primer is drawing attention to this extrinsic
geometry, and using it to explain why certain problems can be solved more
efficiently in terms of the kernel than the space itself.

Another novel feature of this primer is that it motivates and
develops RKHS theory in finite dimensions before considering infinite
dimensions.  RKHS theory is ingenious; the underlying definitions
are simple but powerful and broadly applicable.  These aspects are
best brought out in the finite-dimensional case, free from the
distraction of infinite-dimensional technicalities.  Essentially
all of the finite-dimensional results carry over to the
infinite-dimensional setting.

This primer ultimately aims to empower readers to recognise when and how RKHS
theory can profit them in their own work.  The following are three of the known
uses of RKHS theory.

\begin{enumerate}
\item If a problem involves a subspace of a function space, and if the subspace
(or its completion) is a RKHS, then the additional properties enjoyed by RKHSs
may help solve the problem. (Explicitly computing limits of sequences in Hilbert
spaces can be difficult, but in a RKHS the limit can be found
pointwise.)

\item Certain classes of problems involving positive semi-definite functions can
be solved by introducing an associated RKHS whose kernel is precisely the
positive semi-definite function of interest. A classic example, due to Parzen,
is associating a RKHS with a stochastic process, where the kernel of the RKHS is
the covariance function of the stochastic process (see \S\ref{sec:rsp}).

\item Given a set of points and a function specifying the desired distances
between points, the points can be embedded in a RKHS with the distances between
points being precisely as prescribed; see \S\ref{sec:GbD}. (Support vector
machines use this to convert certain nonlinear problems into linear problems.)
\end{enumerate}

In several contexts, RKHS methods have been described as providing a unified
framework~\cite{Yao:1967tc, Kailath:1971hk, Newton:2002uz, Saitoh:1988vg};
although a subclass of problems was solved earlier by other techniques, a RKHS
approach was found to be more elegant, have broader applicability, or offer new
insight for obtaining actual solutions, either in closed form or numerically.
Parzen describes RKHS theory as facilitating a coordinate-free
approach~\cite{Newton:2002uz}.  While the underlying Hilbert space certainly
allows for coordinate-free expressions, the power of a RKHS beyond that of a
Hilbert space is the presence of two coordinate systems: the pointwise
coordinate system coming from the RKHS being a function space, and a canonical
(but overdetermined) coordinate system coming from the kernel.  The pointwise
coordinate system facilitates taking limits while a number of geometric problems
have solutions conveniently expressed in terms of what we define to be the
canonical coordinate system. (Geometers may wish to think of a RKHS as a
subspace $V \subset \reals^X$ with pointwise coordinates being the extrinsic
coordinates coming from $\reals^X$ while the canonical coordinates are intrinsic
coordinates on $V$ relating directly to the inner product structure on $V$.)

The body of the primer elaborates on all of the points mentioned
above and provides simple but illuminating examples to ruminate on.
Parenthetical remarks are used to provide greater technical detail
that some readers may welcome. They may be ignored without compromising
the cohesion of the primer.  Proofs are there for those wishing to
gain experience at working with RKHSs; simple proofs are
preferred to short, clever, but otherwise uninformative proofs.
Italicised comments appearing in proofs provide intuition or
orientation or both.

This primer is neither a review nor a historical survey, and as such, many
classic works have not been discussed, including those by leading pioneers such
as Wahba~\cite{wahba1973interpolating,wahba1981spline}.

\paragraph*{Contributions} This primer is effectively in two parts. The first
part (\S\ref{ch:intro}--\S\ref{sec:asp}), written by the first author, gives a
gentle and novel introduction to RKHS theory. It also presents several
classical applications. The second part (\S\ref{sec:embR}--\S\ref{ch:appemb}),
with \S\ref{sec:embR} written jointly and \S\ref{ch:appemb} written by the
second author, focuses on recent developments in the machine learning
literature concerning embeddings of random variables.

\section{Assumed Knowledge}

Basic familiarity with concepts from finite-dimensional linear algebra is
assumed: vector space, norm, inner product, linear independence, basis,
orthonormal basis, matrix manipulations and so forth.

Given an inner product $\langle\cdot,\cdot\rangle$, the induced norm is
$\|x\| = \sqrt{\langle x, x \rangle}$.  Not every norm comes from an inner
product, meaning some norms cannot be written in this form. If a norm does come
from an inner product, the inner product can be uniquely determined from
the norm by the polarisation identity $4 \langle x, y \rangle = \|x+y\|^2 -
\|x-y\|^2$.  (A corresponding formula exists for complex-valued vector spaces.)

A metric $d(\cdot,\cdot)$ is a ``distance function'' describing the distance
between two points in a metric space. To be a valid metric, it must satisfy
several axioms, including the triangle inequality.  A normed space is
automatically a metric space by the correspondence $d(x,y) = \|x-y\|$.

\section{Extrinsic Geometry and a Motivating Example}
\label{sec:eg}

Differential geometry groups geometric properties into two kinds: intrinsic and
extrinsic. Intrinsic properties depend only on the space itself, while extrinsic
properties depend on precisely how the space is embedded in a larger space. A
simple example in linear algebra is that the orientation of a straight line
passing through the origin in $\reals^2$ describes the \emph{extrinsic} geometry
of the line.

The following observation helps motivate the development of finite-dimensional
RKHS theory in \S\ref{sec:finRKHS}.  Let
\begin{equation}
L(\theta) = \{(t\cos\theta, t\sin\theta) \mid t \in \reals\} \subset \reals^2
\end{equation}
denote a straight line in $\reals^2$ passing through the origin and intersecting
the horizontal axis at an angle of $\theta$ radians; it is a one-dimensional
subspace of $\reals^2$.  Fix an arbitrary point $p = (p_1,p_2) \in \reals^2$ and
define $f(\theta)$ to be the point on $L(\theta)$ closest to $p$ with respect to
the Euclidean metric. It can be shown that
\begin{equation}
\label{eq:cs}
f(\theta) = (r(\theta) \cos\theta, r(\theta) \sin\theta), \qquad
	r(\theta) = p_1 \cos\theta + p_2 \sin\theta.
\end{equation}
Visualising $f(\theta)$ as the projection of $p$ onto $L(\theta)$ shows that
$f(\theta)$ depends \emph{continuously} on the orientation of the line. While
(\ref{eq:cs}) verifies this continuous dependence, it resorted to introducing an
\emph{ad hoc} parametrisation $\theta$, and different values of $\theta$ (e.g.,
$\theta$, $\pi + \theta$ and $2\pi + \theta$) can describe the same line.

\begin{framed} \noindent
Is there a more natural way of representing
$L(\theta)$ and $f(\theta)$, using linear algebra?
\end{framed}

A first attempt might involve using an orthonormal basis vector to represent
$L(\theta)$. However, there is no \emph{continuous} map from the line
$L(\theta)$ to an orthonormal basis vector $v(\theta) \in L(\theta)$.
(This should be self-evident with some thought, and follows rigorously from the
Borsuk-Ulam theorem.) Note that $\theta \mapsto (\cos\theta,\sin\theta)$ is not
a well-defined map from $L(\theta)$ to $\reals^2$ because $L(0)$ and $L(\pi)$
represent the same line yet $(\cos 0, \sin 0) \neq (\cos \pi, \sin \pi)$.

RKHS theory uses not one but two vectors to represent $L(\theta)$. Specifically,
it turns out that the kernel of $L(\theta)$, in matrix form, is
\begin{equation}
K(\theta) = \begin{bmatrix} \cos\theta \\ \sin\theta \end{bmatrix}
\begin{bmatrix} \cos\theta & \sin\theta \end{bmatrix} = \begin{bmatrix}
\cos^2\theta & \sin\theta \cos\theta \\ \sin\theta \cos\theta & \sin^2\theta
\end{bmatrix}.
\end{equation}
The columns of $K(\theta)$ are
\begin{equation}
k_1(\theta) = \cos\theta \begin{bmatrix} \cos\theta \\ \sin \theta
\end{bmatrix},\qquad k_2(\theta) = \sin\theta \begin{bmatrix} \cos\theta \\
\sin \theta \end{bmatrix}.
\end{equation}
Note that $L(\theta)$ is spanned by $k_1(\theta)$ and $k_2(\theta)$, and
moreover, both $k_1$ and $k_2$ are well-defined (and continuous) functions of
$L$; if $L(\theta) = L(\phi)$ then $k_1(\theta) = k_1(\phi)$ and $k_2(\theta) =
k_2(\phi)$.  To emphasise, although $\theta$ is used here for convenience to
describe the construction, RKHS theory defines a map from $L$ to $k_1$
and $k_2$ that does not depend on any \emph{ad hoc} choice of parametrisation.
It is valid to write $k_1(L)$ and $k_2(L)$ to show they are functions of $L$
alone.

Interestingly, $f$ has a simple representation in terms of the
kernel:
\begin{equation}
f(L) = p_1\, k_1(L) + p_2\, k_2(L).
\end{equation}
Compared with (\ref{eq:cs}), this is both simple and natural, and does not
depend on any \emph{ad hoc} parametrisation $\theta$ of the line $L$. In
summary,
\begin{itemize}
  \item the kernel represents a vector subspace by a possibly overdetermined
  (i.e., linearly dependent) ordered set of vectors, and the correspondence from
  a subspace to this ordered set is continuous;
  \item this \emph{continuous} correspondence cannot be achieved with an ordered
  set of basis (i.e., linearly independent) vectors;
  \item certain problems have solutions that depend continuously on the subspace
  and can be written elegantly in terms of the kernel:
  \begin{equation}
  \mathrm{subspace} \rightarrow \mathrm{kernel} \rightarrow
  \mathrm{solution}.
  \end{equation}
\end{itemize}

The above will be described in greater detail in \S\ref{sec:finRKHS}.

\paragraph*{Remark}
The above example was chosen for its simplicity. Ironically, the general problem
of projecting a point onto a subspace is not well-suited to the RKHS framework
for several reasons, including that RKHS theory assumes there is a norm only on
the subspace; if there is a norm on the larger space in which the subspace sits
then it is ignored.  A more typical optimisation problem benefitting from RKHS
theory is finding the minimum-norm function passing through a finite number of
given points; minimising the norm acts to regularise this interpolation
problem; see \S\ref{sec:ip}.

\section{Pointwise Coordinates and Canonical Coordinates}
\label{sec:cc}

Aimed at readers already familiar with Hilbert space theory, this section
motivates and defines two coordinate systems on a RKHS.

A separable Hilbert space $\mathcal{H}$ possesses an orthonormal basis
$e_1,e_2,\cdots \in \mathcal{H}$. An arbitrary element $v \in \mathcal{H}$ can
be expressed as an infinite series $v = \sum_{i=0}^\infty \alpha_i e_i$ where
the ``coordinates'' $\alpha_i$ are given by $\alpha_i = \langle v , e_i
\rangle$. A classic example is using a Fourier series to represent a periodic
function. The utility of such a construction is that an arbitrary element of
$\mathcal{H}$ can be written as the limit of a linear combination of a
manageable set of fixed vectors.

RKHS theory generalises this ability of writing an arbitrary element of a
Hilbert space as the limit of a linear combination of a manageable set of fixed
vectors. If $\mathcal{H} \subset \reals^T$ is a (not necessarily separable) RKHS
then an arbitrary element $v \in \mathcal{H}$ can be expressed as the limit of a
sequence $v_1,v_2,\cdots \in \mathcal{H}$ of vectors, each of which is a finite
linear combination of the vectors $\{K(\cdot,t) \mid t \in T\}$, where $K\colon
T \times T \rightarrow \reals$ is the kernel of $\mathcal{H}$. It is this
ability to represent an arbitrary element of a RKHS $\mathcal{H}$ as the limit
of a linear combination of the $K(\cdot,t)$ that, for brevity, we refer to as
the presence of a canonical coordinate system. The utility of this canonical
coordinate system was hinted at in \S\ref{sec:eg}.

There is another natural coordinate system: since an element $v$ of a RKHS
$\mathcal{H} \subset \reals^T$ is a function from $T$ to $\reals$, its $t$th
coordinate can be thought of as $v(t)$. The relationship between this pointwise
coordinate system and the aforementioned canonical coordinates is that $v(t) =
\langle v, K(\cdot,t) \rangle$. Note though that whereas an arbitrary linear
combination of the $K(\cdot,t)$ is guaranteed to be an element of $\mathcal{H}$,
assigning values arbitrarily to the $v(t)$, i.e., writing down an arbitrary
function $v$, may not yield an element of $\mathcal{H}$; canonical coordinates
are intrinsic whereas pointwise coordinates are extrinsic. The utility of the
pointwise coordinate system is that limits in a RKHS can be determined
pointwise: if $v_k$ is a Cauchy sequence, implying there exists a $v$ satisfying
$\|v_k - v \| \rightarrow 0$, then $v$ is fully determined by $v(t) = \lim_k
v_k(t)$.


\chapter{Finite-dimensional RKHSs}
\label{sec:finRKHS}

Pedagogic material on RKHSs generally considers infinite-dimensional spaces from
the outset\footnote{The tutorial~\cite{steinke2008kernels} considers
finite-dimensional spaces but differs markedly from the presentation here. An
aim of~\cite{steinke2008kernels} is explaining the types of regularisation
achievable by minimising a RKHS norm.}. Starting with finite-dimensional spaces
though is advantageous because the remarkable aspects of RKHS theory are already
evident in finite dimensions, where they are clearer to see and easier to study.
The infinite-dimensional theory is a conceptually straightforward extension of
the finite-dimensional theory.

Although elements of a RKHS must be functions, there is a canonical
correspondence between $\reals^n$ and real-valued functions on
$\{1,2,\cdots,n\}$ given by the rule that $(x_1,\cdots,x_n) \in
\reals^n$ is equivalent to the function $f\colon\{1,2,\cdots,n\}\rightarrow
\reals$ satisfying $f(i) = x_i$ for $i=1,\cdots,n$.  To exemplify, the point
$(5,8,4) \in \reals^3$ is canonically equivalent to the function
$f\colon\{1,2,3\} \rightarrow \reals$ given by $f(1)=5$, $f(2)=8$ and $f(3)=4$.
For simplicity and clarity, this primer initially works with $\reals^n$.

\paragraph*{Remark}
The term finite-dimensional RKHS is potentially ambiguous; is merely the RKHS
itself finite-dimensional, or must the embedding space also be finite
dimensional? For convenience, we adopt the latter interpretation. While this
interpretation is not standard, it is consistent with our emphasis on the
embedding space playing a significant role in RKHS theory.  Precisely, we say a
RKHS is finite dimensional if the set $X$ in \S\ref{sec:idRKHS} has finite
cardinality.

\section{The Kernel of an Inner Product Subspace}

Central to RKHS theory is the following question.
Let $V \subset \reals^n$ be endowed with an inner product.
How can this configuration be described efficiently?
The configuration involves three aspects:
the vector space $V$, the orientation of $V$ in $\reals^n$, and
the inner product on $V$. Importantly, the inner product is not defined on the
whole of $\reals^n$, unless $V = \reals^n$. (An alternative viewpoint that
will emerge later is that RKHS theory studies possibly degenerate inner products
on $\reals^n$, where $V$ represents the largest subspace on which the inner
product is not degenerate.)

One way of describing the configuration is by writing down a basis
$\{v_1,\cdots,v_r\} \subset V \subset \reals^n$ for $V$ and the
corresponding Gram matrix $G$ whose $ij$th element is $G_{ij} =
\langle v_i, v_j \rangle$.  Alternatively, the configuration is
completely described by giving an orthonormal basis $\{u_1,\cdots,u_r\}
\subset V \subset \reals^n$; the Gram matrix associated with an
orthonormal basis is the identity matrix and does not need
to be stated explicitly.  However, these representations do not satisfy the
following requirements because there is no unique choice for a
basis, even an orthonormal basis.

\begin{description}
\item[One-to-one] The relationship between a subspace $V$ of
$\reals^n$ and its representation should be one-to-one.

\item[Respect Topology] If $(V_1,\langle \cdot , \cdot \rangle_1),
(V_2,\langle \cdot , \cdot \rangle_2), \cdots$ is a sequence of
inner product spaces ``converging'' to $(V,\langle \cdot, \cdot
\rangle)$ then the representations of $(V_1,\langle \cdot , \cdot
\rangle_1), (V_2,\langle \cdot , \cdot \rangle_2), \cdots$ should
``converge'' to the representation of $(V,\langle \cdot, \cdot
\rangle)$, and \emph{vice versa}.

\item[Straightforward Inverse] It should be easy to deduce $V$ and
its inner product from its representation.
\end{description}

The lack of a canonical basis for $V$ can be overcome by considering
spanning sets instead.  Whereas a basis induces a coordinate system,
a spanning set that is not a basis induces an overdetermined
coordinate system due to linear dependence.

RKHS theory produces a unique ordered spanning set $k_1,\cdots,k_n$ for $V
\subset \reals^n$ by the rule that $k_i$ is the unique vector in
$V$ satisfying $\langle v, k_i \rangle = e_i^\top v$ for all $v \in
V$.  Here and throughout, $e_i$ denotes the vector whose elements
are zero except for the $i$th which is unity; its dimension should
always be clear from the context. In words, taking the inner product
with $k_i$ extracts the $i$th element of a vector.  This representation
looks \emph{ad hoc} yet has proven to be remarkably useful, in part
because it unites three different perspectives given by the three
distinct but equivalent definitions below.

\begin{definition}
\label{def:kernel}
Let $V \subset \reals^n$ be an inner product space. The \emph{kernel}
of $V$ is the unique matrix $K = [k_1\ k_2\ \cdots\ k_n]\in \reals^{n
\times n}$ determined by any of the following three equivalent
definitions.
\begin{enumerate}
\item $K$ is such that each $k_i$ is in $V$ and $\langle v, k_i \rangle = e_i^\top v$ for all $v \in V$.
\item $K = u_1 u_1^\top
+ \cdots + u_r u_r^\top$ where $u_1,\cdots,u_r$ is an orthonormal basis for $V$.
\item $K$ is such that the $k_i$ span $V$ and $\langle k_j, k_i \rangle = K_{ij}$.
\end{enumerate}
\end{definition}

The third definition is remarkable despite following easily from
the first definition because it shows that $K \in \reals^{n \times
n}$ is the Gram matrix corresponding to the vectors $k_1,\cdots,k_n$.
The kernel simultaneously encodes a set of vectors that span $V$
and the corresponding Gram matrix for those vectors!

The name \emph{reproducing kernel} can be attributed either to the
kernel $K$ reproducing itself in that its $ij$th element $K_{ij}$
is the inner product of its $j$th and $i$th columns $\langle k_j,
k_i \rangle$, or to the vectors $k_i$ reproducing an arbitrary
vector $v$ by virtue of $v = (\langle v,k_1 \rangle,\cdots, \langle
v, k_n \rangle)$.

\begin{example}
\label{ex:Qinv}
Equip $V = \reals^n$ with an inner product $\langle u,v
\rangle = v^\top Q u$ where $Q$ is symmetric (and positive definite).
The equation $\langle v, k_i \rangle = e_i^\top v$ implies
$k_i = Q^{-1} e_i$, that is, $K = Q^{-1}$.
Alternatively, an eigendecomposition $Q = XDX^\top$ yields an
orthonormal basis for $V$ given by the scaled columns of $X$, namely,
$\{XD^{-\frac12}e_1,\cdots,XD^{-\frac12}e_n\}$.  Therefore $K =
\sum_{i=1}^n (XD^{-\frac12}e_i)(XD^{-\frac12}e_i)^\top = XD^{-1}X^\top =
Q^{-1}$.  Observe that $\langle k_j, k_i \rangle = \langle Q^{-1}e_j,
Q^{-1}e_i \rangle =  e_i^\top Q^{-1} e_j = e_i^\top K e_j = K_{ij}$.
\end{example}

\begin{example}
Let $V \subset \reals^2$ be the subspace spanned by the vector $(1,1)$
and endowed with the inner product giving the vector $(1,1)$ unit norm.
Then $\{(1,1)\}$ is an orthonormal basis for $V$ and therefore
$K = \begin{bmatrix} 1 \\ 1 \end{bmatrix} \begin{bmatrix} 1 & 1 \end{bmatrix}
= \begin{bmatrix} 1 & 1 \\ 1 & 1 \end{bmatrix}$.
\end{example}

\begin{example}
The configuration having $K = \begin{bmatrix} 1 & 0 \\ 0 & 0
\end{bmatrix}$ as its kernel can be found as follows. Since $K$ is
two-by-two, $V$ must sit inside $\reals^2$. Since $V$ is the
span of $k_1 = [1\ 0]^\top$ and $k_2 = [0\ 0]^\top$, $V = \reals \times \{0\}$.
The vector $k_1$ has unit norm in $V$ because $\langle k_1, k_1
\rangle = K_{11} = 1$.
\end{example}

Before proving the three definitions of $K$ are equivalent, several
remarks are made.  Existence and uniqueness of $K$ follows
most easily from definition one because the defining equations
are linear.  From definition one alone it would seem remarkable
that $K$ is always positive semi-definite, denoted $K \geq 0$, yet
this fact follows immediately from definition two. Recall that a
positive semi-definite matrix is a symmetric matrix whose eigenvalues
are greater than or equal to zero. That $K$ is unique is not clear
from definition two alone. Definition three gives perhaps the most
important characterisation of $K$, yet from definition three
alone it is not at all obvious that such a $K$ exists.
Definition three also implies that any element
of $V$ can be written as a linear combination $K\alpha$ of the
columns of $K$, and $\langle K \alpha, K \beta \rangle = \beta^\top K
\alpha$. The reader is invited to verify this by writing $K\beta$
as $\beta_1 k_1 + \cdots + \beta_n k_n$. 

\begin{lemma}
\label{lem:Kunique}
Given an inner product space $V \subset \reals^n$, there is
precisely one $K = [k_1,\cdots,k_n] \in \reals^{n \times n}$ 
for which each $k_i$ is in $V$ and satisfies
$\langle v, k_i \rangle = e_i^\top v$ for all $v \in V$.
\end{lemma}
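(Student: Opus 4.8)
The plan is to treat each column $k_i$ separately, since the $n$ defining conditions decouple: the $i$th condition constrains only $k_i$. For fixed $i$, the requirement is that $k_i \in V$ and that the linear functional $v \mapsto \langle v, k_i \rangle$ on $V$ agree with the functional $\ell_i \colon v \mapsto e_i^\top v$. So the lemma is really the finite-dimensional Riesz representation theorem — every linear functional on a finite-dimensional inner product space is represented by a unique element of that space — applied to the particular functionals $\ell_i$.

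For existence I would invoke Gram--Schmidt to obtain an orthonormal basis $u_1,\dots,u_r$ of $V$ with respect to the given inner product, and then exhibit the candidate
\begin{equation}
k_i = \sum_{j=1}^r (e_i^\top u_j)\, u_j .
\end{equation}
Since each $u_j \in V$, this $k_i$ lies in $V$. To verify the defining property, expand an arbitrary $v \in V$ as $v = \sum_{j=1}^r \langle v, u_j\rangle\, u_j$ and compute $\langle v, k_i\rangle$ using bilinearity and orthonormality; the cross terms vanish and the diagonal terms give $\sum_j \langle v,u_j\rangle (e_i^\top u_j) = e_i^\top\big(\sum_j \langle v,u_j\rangle u_j\big) = e_i^\top v$, as required. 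This is a short computation, so I would not belabour it.

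For uniqueness, suppose $k_i$ and $k_i'$ both lie in $V$ and both satisfy $\langle v, \cdot\rangle = e_i^\top v$ on $V$. Subtracting, $\langle v, k_i - k_i'\rangle = 0$ for every $v \in V$; since $k_i - k_i' \in V$, I may substitute $v = k_i - k_i'$ to obtain $\|k_i - k_i'\|^2 = 0$, and positive-definiteness of the inner product forces $k_i = k_i'$.

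I expect no serious obstacle; the one point to keep clearly in view is the interplay between the two structures in play — $e_i^\top v$ is read off from the ambient coordinates of $\reals^n$, whereas the constraint $k_i \in V$ together with the inner product is intrinsic to $V$ — and it is precisely the restriction $k_i \in V$ (rather than merely $k_i \in \reals^n$) that makes the solution unique. It is also worth remarking that finite-dimensionality enters only through the existence of an orthonormal basis; the same argument works verbatim in any complete inner product space once the Riesz representation theorem is available, which foreshadows the infinite-dimensional development.
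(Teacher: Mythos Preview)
Your proof is correct. Both existence and uniqueness are handled cleanly, and your remark about the two structures at play (extrinsic coordinates versus the intrinsic inner product on $V$) is exactly the right point to flag.

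The route differs from the paper's. The paper argues abstractly: it fixes an arbitrary basis $v_1,\dots,v_r$ of $V$, packages the conditions $\langle v_j,k_i\rangle = e_i^\top v_j$ into a linear map $L\colon V\to\reals^r$, $L(k)=(\langle v_1,k\rangle,\dots,\langle v_r,k\rangle)$, shows $L$ is injective by the same positive-definiteness trick you use for uniqueness, and then invokes the finite-dimensional fact that an injective linear map between spaces of equal dimension is bijective. Existence and uniqueness thus fall out simultaneously from ``square and nonsingular implies invertible''. Your argument instead separates the two halves and is constructive on the existence side: by passing to an orthonormal basis you exhibit $k_i=\sum_j(e_i^\top u_j)u_j$ explicitly. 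This is essentially the content of the paper's Definition~2 ($K=\sum_j u_ju_j^\top$, so $k_i=Ke_i$), so your existence proof doubles as a direct verification that Definition~2 satisfies Definition~1. What the paper's approach buys is that it never needs Gram--Schmidt or orthonormality --- any basis suffices --- which keeps the argument closer to the bare linear-algebra skeleton; what your approach buys is an explicit formula and a cleaner bridge to the infinite-dimensional Riesz theorem, as you note.
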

\begin{proof}
\emph{This is an exercise in abstract linear algebra that is
the generalisation of the linear equation $Ax=b$ having a
unique solution if $A$ is square and non-singular.}
Let $v_1,\cdots,v_r$ be a basis for $V$. Linearity implies
the constraints on $k_i$ are equivalent to requiring
$\langle v_j, k_i \rangle = e_i^\top v_j$ for $j=1,\cdots,r$.
Let $L$ denote the linear operator taking $k \in V$ to
$(\langle v_1, k \rangle, \langle v_2, k \rangle, \cdots,
\langle v_r, k \rangle)$. Both the domain $V$ and the range
$\reals^r$ of $L$ have dimension $r$. \emph{The linear operator
$L$ is equivalent to a square matrix.}
Now, $L$ is injective because if $L(k) = L(\tilde k)$ then
$L(k-\tilde k) = 0$, that is, $\langle v, k-\tilde k \rangle = 0$
for all $v \in V$ (because $v_1,\cdots,v_r$ is a basis), and
in particular, $\langle k - \tilde k, k - \tilde k \rangle = 0$,
implying $k=\tilde k$. \emph{The linear operator $L$ is 
non-singular.} Therefore, $L$ is also surjective, and
in particular, $L(k_i) = (e_i^\top v_1, \cdots, e_i^\top v_r)$ has
one and only one solution.
\end{proof}

Definition three gives a non-linear characterisation of $K$ requiring
$k_i$ to extract the $i$th element of the vectors $k_j$ whereas
definition one requires $k_i$ to extract the $i$th element of any
vector $v$.  However, definition three also requires the
$k_j$ to span $V$, therefore, by writing an arbitrary vector $v$
as a linear combination of the $v_j$, it becomes evident that
definition three satisfies definition one.

\begin{lemma}
If $K$ is such that the $k_i$ span $V$ and $\langle k_j, k_i \rangle
= K_{ij}$ then $\langle v, k_i \rangle = e_i^\top v$ for all $v \in V$.
\end{lemma}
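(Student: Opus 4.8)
The plan is to exploit the spanning hypothesis to reduce the claim to a coordinate computation. Since $k_1,\dots,k_n$ span $V$, every $v \in V$ can be written as a linear combination $v = \alpha_1 k_1 + \cdots + \alpha_n k_n = K\alpha$ for some coefficient vector $\alpha = (\alpha_1,\dots,\alpha_n) \in \reals^n$. (The $\alpha_j$ need not be unique when the $k_j$ are linearly dependent, but any choice will do.) Fix such an $\alpha$ and an index $i$.

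Next I would expand $\langle v, k_i \rangle$ using bilinearity of the real inner product in its first slot:
\begin{equation}
\langle v, k_i \rangle = \Big\langle \sum_{j=1}^n \alpha_j k_j,\ k_i \Big\rangle = \sum_{j=1}^n \alpha_j \langle k_j, k_i \rangle.
\end{equation}
Now invoke the hypothesis $\langle k_j, k_i \rangle = K_{ij}$ to rewrite the right-hand side as $\sum_{j=1}^n K_{ij}\,\alpha_j$, which is precisely the $i$th entry of the matrix-vector product $K\alpha$. Since $v = K\alpha$, that $i$th entry is $v_i$, i.e., $e_i^\top v$. Hence $\langle v, k_i \rangle = e_i^\top v$ for all $v \in V$ and all $i$, as required.

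There is essentially no obstacle here; the only point requiring a moment's care is the index order in the Gram condition, namely that $\langle k_j,k_i\rangle = K_{ij}$ (not $K_{ji}$), so that after summing over $j$ one genuinely lands on the $i$th row of $K$ acting on $\alpha$ — but $K$ is symmetric anyway, so even a slip here is harmless. The argument is the converse bookkeeping to the remark already made in the excerpt that $\langle K\alpha, K\beta\rangle = \beta^\top K \alpha$, and it makes the equivalence of definitions one and three transparent in both directions.
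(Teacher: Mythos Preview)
Your proof is correct and follows essentially the same approach as the paper: write $v = K\alpha$ using the spanning hypothesis, then use bilinearity together with $\langle k_j, k_i\rangle = K_{ij}$ to obtain $\langle v, k_i\rangle = e_i^\top K\alpha = e_i^\top v$. The paper merely packages your bilinearity expansion into the previously noted identity $\langle K\alpha, K\beta\rangle = \beta^\top K\alpha$ (specialised to $\beta = e_i$), so the two arguments are the same computation in slightly different clothing.
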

\begin{proof}
Fix a $v \in V$.
Since $v$ is in $V$ and the $k_i$ span $V$, there is a vector
$\alpha$ such that $v = K\alpha$. Also, $k_i = Ke_i$.
Therefore, $\langle v, k_i \rangle = \langle K\alpha, Ke_i \rangle
= e_i^\top K \alpha = e_i^\top v$, as required. (Recall from earlier that
$\langle k_j, k_i \rangle
= K_{ij}$ implies $\langle K\alpha,K\beta \rangle = \beta^\top K \alpha$.)
\end{proof}

The existence of a $K$ satisfying definition three is implied by the
existence of a $K$ satisfying definition one.

\begin{lemma}
If $K$ is such that each $k_i$ is in $V$ and
$\langle v, k_i \rangle = e_i^\top v$ for all $v
\in V$ then the $k_i$ span $V$ and $\langle k_j, k_i \rangle =
K_{ij}$.
\end{lemma}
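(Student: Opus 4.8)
The plan is to prove the two assertions separately: the Gram-matrix identity $\langle k_j, k_i \rangle = K_{ij}$ follows directly from the reproducing property, while the spanning property is obtained by a standard finite-dimensional orthogonality argument. \emph{The key observation is that the reproducing property is stated for all $v \in V$, and each $k_j$ is itself an element of $V$, so it may be fed back into the defining equation.}

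First I would establish $\langle k_j, k_i \rangle = K_{ij}$. Since $k_j \in V$ by hypothesis, the identity $\langle v, k_i \rangle = e_i^\top v$ applies with $v = k_j$, yielding $\langle k_j, k_i \rangle = e_i^\top k_j$. But $e_i^\top k_j$ is precisely the $i$th entry of the $j$th column of $K$, which is $K_{ij}$ by the definition $K = [k_1\ \cdots\ k_n]$. This step is essentially immediate and needs no further computation.

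Next I would show the $k_i$ span $V$. Let $W = \operatorname{span}\{k_1,\cdots,k_n\}$, which is a subspace of $V$ because each $k_i \in V$. Suppose for contradiction that $W \neq V$. Since $V$ is a finite-dimensional inner product space, $V = W \oplus W^\perp$ with $W^\perp$ taken inside $V$, so there is a nonzero $w \in V$ with $\langle w, k_i \rangle = 0$ for every $i$. Applying the reproducing property to $w$ gives $e_i^\top w = \langle w, k_i \rangle = 0$ for all $i$, so every coordinate of $w$ in $\reals^n$ vanishes, forcing $w = 0$ — a contradiction. Hence $W = V$.

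The only step requiring any care — and the nearest thing to an obstacle — is the orthogonality argument, which relies on $V$ being finite-dimensional so that the orthogonal decomposition $V = W \oplus W^\perp$ is available without any completeness hypothesis. In the present setting this is automatic, but it is worth flagging, since when the theory is later extended to infinite dimensions this is exactly the point at which one must invoke closedness of subspaces.
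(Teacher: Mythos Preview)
Your proof is correct and follows essentially the same route as the paper's: both substitute $v=k_j$ into the reproducing identity to obtain $\langle k_j,k_i\rangle = e_i^\top k_j = K_{ij}$, and both argue spanning by contradiction via a nonzero vector orthogonal to every $k_i$. Your additional remark about finite-dimensionality being needed for the orthogonal complement is a nice forward-looking observation, but the core argument is identical.
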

\begin{proof}
That $\langle k_j, k_i \rangle = K_{ij}$ follows immediately from
$K_{ij}$ being the $i$th element of $k_j$, namely, $e_i^\top k_j$.
If the $k_i$ do not span $V$ then there is a non-zero $k \in V$ which
is orthogonal to each and every $k_i$, that is, $\langle k, k_i \rangle = 0$.
Yet this implies $e_i^\top k = 0$ for all $i$, that is, $k=0$, a contradiction.
\end{proof}

If the columns of $U \in \reals^{n \times r}$ form an orthonormal basis for 
$V \subset \reals^n$ then an arbitrary vector in $V$ can be written as
$U\alpha$, and moreover, $\langle U\alpha, U\beta \rangle = \beta^\top \alpha$.
Referring to definition two, if the $u_i$ are the columns of $U$, then
$u_1u_1^\top + \cdots + u_r u_r^\top = UU^\top$. Armed with these facts, showing
definition two satisfies definition one becomes straightforward.

\begin{lemma}
If $u_1,\cdots,u_r$ is an orthonormal basis for $V$ then $K = u_1
u_1^\top + \cdots + u_r u_r^\top$ satisfies $\langle v, k_i \rangle =
e_i^\top v$ for all $v \in V$.
\end{lemma}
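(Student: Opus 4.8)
The plan is to reduce everything to the two facts recalled just before the lemma: that every $v \in V$ can be written as $v = U\alpha$ where $U = [u_1\ \cdots\ u_r]$ has the $u_i$ as its columns, and that $\langle U\alpha, U\beta \rangle = \beta^\top \alpha$ for all $\alpha,\beta \in \reals^r$. First I would rewrite the kernel in matrix form: $K = u_1 u_1^\top + \cdots + u_r u_r^\top = UU^\top$, so its $i$th column is $k_i = Ke_i = U(U^\top e_i)$. In particular $k_i = U\gamma$ with $\gamma = U^\top e_i$, which makes plain that each $k_i$ lies in $V$ — a requirement implicitly needed for the expression $\langle v, k_i\rangle$ even to make sense, since the inner product lives only on $V$.

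Next, fix an arbitrary $v \in V$ and write it as $v = U\alpha$. Then
\[
\langle v, k_i \rangle = \langle U\alpha,\ U(U^\top e_i) \rangle = (U^\top e_i)^\top \alpha = e_i^\top U\alpha = e_i^\top v,
\]
where the second equality is the identity $\langle U\alpha, U\beta\rangle = \beta^\top\alpha$ applied with $\beta = U^\top e_i$. This is exactly the claimed reproducing property.

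There is essentially no obstacle here; the only point requiring care is to avoid silently invoking an ambient inner product on $\reals^n$, which is not assumed to exist. This is why it matters that both $v$ and $k_i$ are first exhibited in the form $U(\cdot)$, so that all inner products are evaluated via the single valid identity $\langle U\alpha, U\beta\rangle = \beta^\top\alpha$. One could instead argue coordinatewise, expanding $k_i = \sum_j (u_j^\top e_i)\, u_j$ and $v = \sum_j \alpha_j u_j$ and using orthonormality $\langle u_j, u_\ell\rangle = \delta_{j\ell}$, but the matrix form above is cleaner. Together with the preceding three lemmas, this closes the cycle of implications and shows the three definitions in Definition~\ref{def:kernel} are equivalent.
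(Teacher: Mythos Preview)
Your proof is correct and follows essentially the same approach as the paper: write $K = UU^\top$, hence $k_i = UU^\top e_i$, represent $v = U\alpha$, and conclude $\langle v, k_i \rangle = \langle U\alpha, UU^\top e_i \rangle = e_i^\top U\alpha = e_i^\top v$ via the identity $\langle U\alpha, U\beta \rangle = \beta^\top\alpha$. Your added remark that $k_i \in V$ (so the inner product is defined) and the caution against silently using an ambient inner product on $\reals^n$ are nice touches the paper leaves implicit.
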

\begin{proof}
Let $U = [u_1,\cdots,u_r]$, so that $K = UU^\top$ and $k_i = Ke_i = UU^\top e_i$.
An arbitrary
$v \in V$ can be represented as $v = U\alpha$. Therefore
$\langle v, k_i \rangle = \langle U\alpha, UU^\top e_i \rangle =
e_i^\top U \alpha = e_i^\top v$, as required.
\end{proof}

Since definition one produces a unique $K$, and definitions two and
three both produce a $K$ satisfying definition one, the equivalence
of the three definitions has been proven.

The next pleasant surprise is that for every positive semi-definite
matrix $K \in \reals^{n \times n}$ there is an inner product space
$V \subset \reals^n$ whose kernel is $K$. The proof relies on the
following.

\begin{lemma}
\label{lem:aKa}
If $K \in \reals^{n \times n}$
is positive semi-definite then $\alpha^\top K \alpha = 0$ implies
$K \alpha = 0$, where $\alpha$ is a vector in $\reals^n$.
\end{lemma}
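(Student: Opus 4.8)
The plan is to use the fact that every positive semi-definite matrix factors as $K = B^\top B$ for some $B \in \reals^{n \times n}$; such a $B$ can be obtained from an eigendecomposition $K = XDX^\top$ (with $D$ diagonal and $D \geq 0$) by setting $B = D^{\frac12} X^\top$, exactly as in Example~\ref{ex:Qinv}. With this factorization in hand, the hypothesis becomes $\alpha^\top K \alpha = \alpha^\top B^\top B \alpha = \| B\alpha \|^2 = 0$, where $\|\cdot\|$ is the ordinary Euclidean norm on $\reals^n$. A norm vanishes only at the zero vector, so $B\alpha = 0$. Then $K\alpha = B^\top B \alpha = B^\top (B\alpha) = 0$, which is the desired conclusion.

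An alternative route, perhaps more in keeping with the inner-product theme of this section, is to observe that $(\beta,\gamma) \mapsto \beta^\top K \gamma$ is a (possibly degenerate) inner product on $\reals^n$ precisely because $K \geq 0$: it is symmetric, bilinear, and non-negative on the diagonal, which is all that is needed for the Cauchy--Schwarz inequality. Applying Cauchy--Schwarz gives $\lvert \beta^\top K \alpha \rvert^2 \le (\beta^\top K \beta)(\alpha^\top K \alpha) = 0$ for every $\beta \in \reals^n$, so $\beta^\top K \alpha = 0$ for all $\beta$; taking $\beta = e_i$ for each $i$ shows every entry of $K\alpha$ vanishes, hence $K\alpha = 0$. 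A third, purely computational option is to diagonalise $K = \sum_i \lambda_i u_i u_i^\top$ with $\lambda_i \ge 0$, note that $\alpha^\top K \alpha = \sum_i \lambda_i (u_i^\top \alpha)^2$ is a sum of non-negative terms that vanishes, so each $\lambda_i (u_i^\top \alpha)^2 = 0$ and hence $\lambda_i (u_i^\top \alpha) = 0$, and conclude $K\alpha = \sum_i \lambda_i (u_i^\top \alpha) u_i = 0$.

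I expect no real obstacle here: the statement is the matrix analogue of ``$\|x\|^2 = 0 \Rightarrow x = 0$'', and each of the three approaches above is short. The only point requiring a (standard) external fact is the spectral theorem for real symmetric matrices --- already invoked in Example~\ref{ex:Qinv} --- which underwrites both the factorization $K = B^\top B$ and the diagonalisation; the Cauchy--Schwarz route avoids even that, needing only the algebraic properties of $K \geq 0$. I would present the factorization argument as the main proof for brevity and perhaps record the Cauchy--Schwarz observation as a parenthetical remark, since the semi-inner-product viewpoint recurs later in the development.
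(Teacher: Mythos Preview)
Your proposal is correct, and both your primary factorization argument and your third diagonalisation argument are essentially the paper's own proof: the paper eigendecomposes $K = UDU^\top$, sets $\beta = U^\top\alpha$, observes $\beta^\top D\beta = 0$ with $D \geq 0$ forces $D\beta = 0$, and concludes $K\alpha = UD\beta = 0$. Your Cauchy--Schwarz alternative is a genuinely different route not taken in the paper; it has the mild advantage of avoiding the spectral theorem entirely, trading it for the (elementary) fact that Cauchy--Schwarz holds for any positive semi-definite bilinear form, which fits nicely with the semi-inner-product viewpoint that recurs in Lemma~\ref{lem:VfromK}.
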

\begin{proof}
Eigendecompose $K$ as $K = U D U^\top$. Let $\beta = U^\top \alpha$.
Then $\alpha^\top K \alpha = 0$ implies $\beta^\top D \beta = 0$.
Since $D \geq 0$, $D \beta$ must be zero. Therefore $K \alpha =
U D U^\top \alpha = U D \beta = 0$.
\end{proof}

\begin{lemma}
\label{lem:VfromK}
Let $V = \operatorname{span}\{k_1,\cdots,k_n\}$ be the space spanned
by the columns $k_1,\cdots,k_n$ of a positive semi-definite matrix
$K \in \reals^{n \times n}$. There exists an inner product
on $V$ satisfying $\langle k_j, k_i \rangle = K_{ij}$.
\end{lemma}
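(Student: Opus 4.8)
The plan is to define the inner product by the formula already foreshadowed in the remarks following Definition~\ref{def:kernel}: since the columns $k_1,\dots,k_n$ span $V$, every $u \in V$ can be written as $u = K\alpha$ for some $\alpha \in \reals^n$, and likewise $w = K\beta$; I would then set $\langle u, w \rangle := \beta^\top K \alpha$. The substance of the proof is verifying that this recipe genuinely yields an inner product on $V$ and that it reproduces the prescribed values $K_{ij}$.

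First I would check well-definedness, which I expect to be the main obstacle, since the representation $u = K\alpha$ is far from unique when $K$ is singular. One must show that $\beta^\top K \alpha$ depends only on the vectors $u = K\alpha$ and $w = K\beta$, not on the chosen coefficient vectors. If $K\alpha = K\alpha'$ then $K(\alpha - \alpha') = 0$, hence $\beta^\top K(\alpha - \alpha') = \beta^\top 0 = 0$, so the value is unchanged; symmetry of $K$ lets the same argument handle a change of $\beta$, because $(\beta - \beta')^\top K \alpha = \bigl(K(\beta - \beta')\bigr)^\top \alpha = 0$.

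The remaining axioms are routine. Bilinearity is immediate from the formula. Symmetry follows from $\beta^\top K \alpha = \alpha^\top K^\top \beta = \alpha^\top K \beta$ using $K = K^\top$. For positivity, $\langle K\alpha, K\alpha \rangle = \alpha^\top K \alpha \geq 0$ because $K$ is positive semi-definite; and if $\alpha^\top K \alpha = 0$ then Lemma~\ref{lem:aKa} gives $K\alpha = 0$, i.e.\ the vector $u = K\alpha$ is the zero vector, so the form is in fact positive definite \emph{on $V$} (degeneracy of $K$ is exactly absorbed by passing to the quotient onto its column space).

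Finally, to confirm the reproducing property, take $\alpha = e_j$ and $\beta = e_i$, so that $K e_j = k_j$ and $K e_i = k_i$; then $\langle k_j, k_i \rangle = e_i^\top K e_j = K_{ij}$, as claimed. (One could alternatively invoke Lemma~\ref{lem:VfromK}'s companion results by noting $K$ is the Gram matrix of its own columns, but the direct computation is shorter and self-contained.)
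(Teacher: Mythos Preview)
Your proof is correct and follows essentially the same approach as the paper: define $\langle K\alpha, K\beta\rangle = \beta^\top K\alpha$, check well-definedness via $K(\alpha-\alpha')=0$ and symmetry of $K$, and invoke Lemma~\ref{lem:aKa} for positive definiteness on $V$. Your version is in fact slightly more complete, since you explicitly verify symmetry and the final identity $\langle k_j,k_i\rangle = K_{ij}$, both of which the paper leaves implicit.
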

\begin{proof}
Let $u,v \in V$. Then there exist vectors $\alpha, \beta$ such
that $u = K\alpha$ and $v = K\beta$. Define $\langle u,v\rangle$
to be $\beta^\top K \alpha$. To show this is well-defined, let
$u = K\tilde\alpha$ and $v = K\tilde\beta$ be possibly different
representations. Then $\beta^\top K \alpha - \tilde\beta^\top K \tilde\alpha
= (\beta - \tilde\beta)^\top K \alpha + \tilde\beta^\top K (\alpha - \tilde\alpha)
= 0$ because $K\alpha = K\tilde\alpha$, $K\beta = K\tilde\beta$ and $K=K^\top$.
Clearly $\langle \cdot,\cdot \rangle$ so defined is bilinear.
To prove $\langle \cdot,\cdot \rangle$ is positive definite,
assume $\langle K\alpha, K\alpha \rangle = \alpha^\top K \alpha = 0$. 
By Lemma~\ref{lem:aKa} this implies $K\alpha=0$, as required.
\end{proof}

Given $K$, there is a \emph{unique} inner product space whose kernel is $K$.

\begin{lemma}
\label{lem:Kinject}
Let $V_1 \subset \reals^n$ and $V_2 \subset \reals^n$ be two inner
product spaces having the same kernel $K$. Then $V_1$ and $V_2$ are
identical spaces: $V_1 = V_2$ and their inner products are the same.
\end{lemma}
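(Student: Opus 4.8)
The plan is to read off both the underlying space and the inner product directly from $K$ using the third characterisation in Definition~\ref{def:kernel}. First I would pin down the spaces: since $K$ is the kernel of $V_1$, definition three says the columns $k_1,\cdots,k_n$ of $K$ span $V_1$; since $K$ is equally the kernel of $V_2$, those same columns span $V_2$. Hence $V_1 = \operatorname{span}\{k_1,\cdots,k_n\} = V_2$, and I would denote this common subspace by $V$.

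Next I would show the inner products agree. Write $\langle\cdot,\cdot\rangle_1$ and $\langle\cdot,\cdot\rangle_2$ for the inner products on $V_1$ and $V_2$. Fix $u,v\in V$; because the columns of $K$ span $V$, there are vectors $\alpha,\beta\in\reals^n$ with $u = K\alpha$ and $v = K\beta$. Writing $u = \sum_j \alpha_j k_j$ and $v = \sum_i \beta_i k_i$, bilinearity together with the relation $\langle k_j,k_i\rangle = K_{ij}$ from definition three --- which holds for \emph{either} inner product, since each has kernel $K$ --- gives $\langle u,v\rangle_1 = \sum_{i,j}\beta_i\alpha_j K_{ij} = \beta^\top K\alpha$, and identically $\langle u,v\rangle_2 = \beta^\top K\alpha$. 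Thus $\langle u,v\rangle_1 = \langle u,v\rangle_2$ for all $u,v\in V$. This is exactly the identity $\langle K\alpha,K\beta\rangle = \beta^\top K\alpha$ noted earlier, now applied to each of the two inner products.

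I do not expect a genuine obstacle; the only point needing care is that the representation $u = K\alpha$ is in general not unique, so one should confirm the quantity $\beta^\top K\alpha$ is independent of the chosen $\alpha,\beta$ --- but this is immediate from the computation above, which expresses $\langle u,v\rangle$ solely in terms of $K$, and it was in any case dispatched in the well-definedness step of Lemma~\ref{lem:VfromK}. Taken with the earlier results (every inner product subspace of $\reals^n$ has a positive semi-definite kernel, and every positive semi-definite matrix is the kernel of some inner product subspace), this lemma shows the kernel map is a bijection between inner product subspaces of $\reals^n$ and positive semi-definite matrices in $\reals^{n\times n}$, delivering the ``one-to-one'' requirement demanded earlier.
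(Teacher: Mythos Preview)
Your proof is correct and follows essentially the same approach as the paper: both argue that the columns of $K$ span $V_1$ and $V_2$ (hence $V_1=V_2$), and that the inner product is forced by the Gram relations $\langle k_j,k_i\rangle = K_{ij}$ to equal $\langle K\alpha,K\beta\rangle = \beta^\top K\alpha$. You have simply spelled out the bilinearity step in more detail than the paper does.
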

\begin{proof}
The columns of $K$ span both $V_1$ and $V_2$, hence $V_1 = V_2$.
For the same reason, the inner products on $V_1$ and $V_2$ are
uniquely determined from the Gram matrix $K$ corresponding to $k_1,\cdots,k_n$.
Since $V_1$ and $V_2$ have the same Gram matrix, their inner products
are identical too. (Indeed, the inner product must be given
by $\langle K\alpha, K\beta \rangle = \beta^\top K \alpha$.)
\end{proof}

To summarise, for a fixed $n$, there is a bijective correspondence
between inner product spaces $V \subset \reals^n$ and positive
semi-definite matrices $K \in \reals^{n \times n}$.  Given $V \subset
\reals^n$ there is precisely one kernel $K \geq 0$
(Definition~\ref{def:kernel} and Lemma~\ref{lem:Kunique}). Given a
$K \geq 0$, there is precisely one inner product space $V \subset
\reals^n$ for which $K$ is its kernel (Lemmata~\ref{lem:VfromK}
and~\ref{lem:Kinject}).  Recalling the criteria listed earlier,
this correspondence is One-to-one and has a Straightforward Inverse.
That it Respects Topology is discussed next.

\section{Sequences of Inner Product Spaces}

A sequence of one-dimensional vector spaces in $\reals^2$ can be
visualised as a collection of lines passing through the origin and
numbered $1,2,\cdots$. It can be recognised visually when such a
sequence converges. (This corresponds to the topology of the Grassmann
manifold.) The kernel representation of an inner product space
induces a concept of convergence for the broader situation of a
sequence of \emph{inner product} subspaces, possibly of \emph{differing
dimensions}, by declaring that the limit $V_\infty \subset \reals^n$
of a sequence of inner product spaces $V_1,V_2,\cdots \subset
\reals^n$ is the space whose kernel is $K_\infty = \lim_{n \rightarrow
\infty} K_n$, if the limit exists.  This makes the kernel representation
of a subspace Respect Topology. The pertinent question is whether
the induced topology is sensible in practice.

\begin{example}
\label{ex:Kinf}
Define on $\reals^2$ a sequence of inner products $\langle u, v
\rangle_n = v^\top Q_n u$ where $Q_n = \operatorname{diag}\{1,n^2\}$
is a diagonal matrix with entries $1$ and $n^2$.  An orthonormal
basis for the $n$th space is $\{(1,0),(0,n^{-1})\}$.  
It seems acceptable, at least in certain situations,
to agree that the limit of this sequence
of two-dimensional spaces is the one-dimensional subspace of
$\reals^2$ spanned by $(1,0)$.  This accords with defining convergence
via kernels.  Let $K_n$ be the kernel of the $n$th space: $K_n
= \operatorname{diag}\{1,n^{-2}\}$. Then $K_\infty = \lim_{n
\rightarrow \infty} K_n = \operatorname{diag}\{1,0\}$. The space
$V_\infty$ having kernel $K_\infty$ is the subspace of $\reals^2$
spanned by $(1,0)$, where the inner product is such that $(1,0)$
has unit norm.
\end{example}

The example above illustrates a general principle: since $K$ can
be defined in terms of an orthonormal basis as $K = u_1u_1^\top +
\cdots + u_r u_r^\top$, a sequence of higher-dimensional inner product
spaces can converge to a lower-dimensional inner product space if
one or more of the orthonormal basis vectors approaches the zero
vector.
This need not be the only sensible topology, but it is a topology
that finds practical use, as later verified by examples.

\paragraph*{Remark}
Recall from Example~\ref{ex:Qinv} that if $V=\reals^n$ then $K=Q^{-1}$. If $Q_n$
is a sequence of positive definite matrices becoming degenerate in the limit,
meaning one or more eigenvalues goes to infinity, then $Q_n$ does not
have a limit.  However, the corresponding eigenvalues of $K_n =
Q_n^{-1}$ go to zero, hence the kernel $K_n$ may have a well-defined limit. In
this sense, RKHS theory encompasses degenerate inner products. A way of
visualising this is presented in \S\ref{sec:vis}.

\section{Extrinsic Geometry and Interpolation}

Finite-dimensional RKHS theory studies subspaces $V$ of $\reals^n$
rather than abstract vector spaces.  If it is only known that $Z$
is an abstract vector space then there is no way of knowing what
the elements of $Z$ look like. The best that can be done is assert
the existence of a basis $\{b_1,\cdots,b_r\}$.  By comparison,
knowing $V \subset \reals^n$ allows working with $V$ using
\emph{extrinsic coordinates} by writing an element of $V$ as a
vector in $\reals^n$.

Writing $V \subset \reals^n$ may also signify the importance of the
orientation of $V$ inside $\reals^n$.  If $V$ and $W$ are $r$-dimensional
linear subspaces of $\reals^n$ then their \emph{intrinsic geometry}
is the same but their \emph{extrinsic geometry} may differ; $V$ and
$W$ are equivalent (precisely, isomorphic) as vector spaces but
they lie inside $\reals^n$ differently unless $V=W$.

The usefulness of extrinsic geometry is exemplified by considering
the interpolation problem of finding a vector $x \in V \subset
\reals^n$ of smallest norm and some of whose coordinates $e_i^\top x$
are specified.  This problem is posed using extrinsic geometry. It
can be rewritten intrinsically, without reference to $\reals^n$,
by using the linear operator $L_i\colon V \rightarrow \reals$ defined
to be the restriction of $x \mapsto e_i^\top x$.  However, the extrinsic
formulation is the more amenable to a unified approach.

\begin{example}
\label{ex:Interp}
Endow $V \subset \reals^n$ with an inner product.  Fix
an $i$ and consider how to find $x \in V$ satisfying $e_i^\top x = 1$
and having the smallest norm. Geometrically, such an $x$ must be
orthogonal to any vector $v \in V$ satisfying $e_i^\top v = 0$, for
otherwise its norm could be decreased. This approach allows $x$
to be found by solving a system of linear equations,
however, going further seems to require choosing a basis for $V$.
\end{example}

RKHS theory replaces an \emph{ad hoc} choice of basis for $V \subset
\reals^n$ by the particular choice $k_1,\cdots,k_n$ of spanning vectors 
for $V$.  Because the kernel representation Respects Topology,
the vectors $k_1,\cdots,k_n$ vary continuously with changing $V$.
The solution to the interpolation problem should also vary continuously
as $V$ changes. There is thus a chance that the solution can be
written elegantly in terms of $k_1,\cdots,k_n$.

\begin{example}
\label{ex:Interp2}
Continuing the example above, let $K=[k_1,\cdots,k_n]$ be the kernel
of $V$. Let $x = K\alpha$ and $v = K\beta$. The constraints $e_i^\top x = 1$
and $\langle x,v \rangle = 0$ whenever $e_i^\top v = 0$ become $k_i^\top \alpha = 1$
and $\beta^\top K \alpha = 0$ whenever $\beta^\top K e_i = 0$.
(Recall that $i$ is fixed.) The latter requirement is satisfied by
$\alpha = c e_i$ where $c \in \reals$ is a scalar. Solving $k_i^\top \alpha =1$
implies $c = K_{ii}^{-1}$. Therefore, $x = K_{ii}^{-1}k_i$.
Note $\|x\|^2 = \langle x, x \rangle = K_{ii}^{-1}$.
\end{example}

In the above example, as $V$ changes, both the kernel $K$ and the
solution $x$ change.  Yet the relationship between $x$ and $K$
remains constant: $x = K_{ii}^{-1}k_i$.  This is further evidence
that the representation $K$ of the extrinsic geometry is a useful
representation.

There is a geometric explanation for the columns of $K$ solving the
single-point interpolation problem.  Let $L_i\colon V \rightarrow
\reals$ denote the $i$th coordinate function: $L_i(v) = e_i^\top v$.
That $\langle v, k_i \rangle = L_i(v)$ means $k_i$ is the gradient
of $L_i$. In particular, the line determined by $k_i$ meets the
level set $\{ v \mid L_i(v) = 1 \}$ at right angles, showing that
$k_i$ meets the orthogonality conditions for optimality.
Turning this around leads to yet another definition of $K$.

\begin{lemma}
\label{lem:geomK}
The following is a geometric definition of the $k_i$ 
that is equivalent to Definition~\ref{def:kernel}.  Let $H_i = \{z
\in \reals^n \mid e_i^\top z = 1\}$ be the hyperplane consisting of
all vectors whose $i$th coordinate is unity.  If $V \cap H_i$ is
empty then define $k_i = 0$. Otherwise, let $\tilde k_i$ be the
point in the intersection $V \cap H_i$ that is closest to the origin.
Define $k_i$ to be $k_i = c \tilde k_i$
where $c = \langle \tilde k_i, \tilde k_i \rangle^{-1}$.
\end{lemma}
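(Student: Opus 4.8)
The plan is to show that the vector $k_i$ defined geometrically here coincides with the $k_i$ from Definition~\ref{def:kernel} by verifying it satisfies the first (linear) definition: namely that $k_i \in V$ and $\langle v, k_i \rangle = e_i^\top v$ for all $v \in V$. Since Lemma~\ref{lem:Kunique} guarantees uniqueness, matching that characterisation suffices.

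I would split into two cases according to whether $V \cap H_i$ is empty. First, suppose $V \cap H_i = \varnothing$. I claim this forces $e_i^\top v = 0$ for every $v \in V$: if some $v_0 \in V$ had $e_i^\top v_0 = c \neq 0$, then $c^{-1} v_0 \in V \cap H_i$, a contradiction. Hence the coordinate functional $L_i$ vanishes identically on $V$, so $k_i = 0$ is exactly the vector satisfying $\langle v, 0 \rangle = 0 = e_i^\top v$ for all $v \in V$, matching the geometric prescription $k_i = 0$.

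Now suppose $V \cap H_i \neq \varnothing$. The intersection $V \cap H_i$ is an affine subspace of $V$ (a coset of the hyperplane-through-origin $\{v \in V \mid e_i^\top v = 0\}$ inside $V$), so it has a unique point $\tilde k_i$ of minimal norm, and this point is characterised by being orthogonal (in the inner product of $V$) to the linear subspace $W_i = \{v \in V \mid e_i^\top v = 0\}$; this is the standard projection argument already rehearsed in Example~\ref{ex:Interp}. I would record two consequences: (a) $\langle \tilde k_i, w \rangle = 0$ for all $w \in W_i$, and (b) $e_i^\top \tilde k_i = 1$ since $\tilde k_i \in H_i$. Writing an arbitrary $v \in V$ as $v = (e_i^\top v)\,\tilde k_i + w$ with $w = v - (e_i^\top v)\tilde k_i \in W_i$ (check: $e_i^\top w = e_i^\top v - (e_i^\top v)(1) = 0$), bilinearity gives $\langle v, \tilde k_i \rangle = (e_i^\top v)\,\langle \tilde k_i, \tilde k_i \rangle$. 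Multiplying by $c = \langle \tilde k_i, \tilde k_i \rangle^{-1}$ yields $\langle v, k_i \rangle = \langle v, c\,\tilde k_i \rangle = e_i^\top v$, and $k_i = c\,\tilde k_i \in V$ since $V$ is a subspace. Thus $k_i$ satisfies the first definition, and equals the kernel column by uniqueness. One should also note $\langle \tilde k_i, \tilde k_i \rangle > 0$ here, so $c$ is well-defined: indeed $\tilde k_i \neq 0$ because $e_i^\top \tilde k_i = 1$, and positive-definiteness of the inner product on $V$ then gives $\langle \tilde k_i, \tilde k_i \rangle > 0$.

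The main obstacle, such as it is, is purely expository: making the projection step rigorous in the possibly-degenerate-looking setting. But because the inner product on $V$ is genuinely positive definite (that is part of $V$ being an inner product space) and $V$ is finite-dimensional, the existence and uniqueness of the minimal-norm point in the affine set $V \cap H_i$, together with its orthogonality characterisation, is the classical finite-dimensional projection theorem applied inside $V$ — no completeness subtleties arise. So the only real care needed is bookkeeping the decomposition $v = (e_i^\top v)\tilde k_i + w$ correctly and confirming the empty-intersection case lines up with $L_i \equiv 0$.
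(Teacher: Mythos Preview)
Your proof is correct and follows essentially the same approach as the paper: reduce to verifying definition one and appeal to uniqueness, handle the empty-intersection case by showing $L_i \equiv 0$ on $V$, and in the non-empty case use the orthogonality of the minimiser $\tilde k_i$ to $W_i = \{v \in V \mid e_i^\top v = 0\}$ together with the decomposition $v = (e_i^\top v)\tilde k_i + w$ to compute $\langle v, k_i \rangle = e_i^\top v$. The paper's argument is virtually identical, differing only in presentation.
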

\begin{proof}
Since $K$ is unique (Lemma~\ref{lem:Kunique}), it suffices to prove
the $k_i$ defined here satisfy definition one of
Definition~\ref{def:kernel}. If $V \cap H_i$ is empty then $e_i^\top v=0$
for all $v \in V$ and $k_i = 0$ satisfies definition one.  Assume
then that $V \cap H_i$ is non-empty. It has a closest point, $\tilde
k_i$, to the origin. As $e_i^\top \tilde k_i = 1$, $\tilde k_i$ is
non-zero and $c$ is well-defined.  Also, $\langle w, \tilde k_i
\rangle = 0$ whenever $w \in V$ satisfies $e_i^\top w = 0$, for otherwise
$\tilde k_i$ would not have the smallest norm.  Let $v \in V$ be
arbitrary and define $w = v - a \tilde k_i$ where $a = e_i^\top v$.
Then $e_i^\top w = e_i^\top v - a = 0$.  Thus $\langle v, k_i \rangle =
\langle w + a \tilde k_i, c \tilde k_i \rangle = a c \langle \tilde
k_i, \tilde k_i \rangle = a = e_i^\top v$, as required.
\end{proof}

The $c$ in the lemma scales $\tilde k_i$ so that $\langle k_i, k_i
\rangle = e_i^\top k_i$.  It is also clear that if $k_i \neq 0$ then
$K_{ii} \neq 0$, or in other words, if $K_{ii} = 0$ then the $i$th
coordinate of every vector $v$ in $V$ is zero and the interpolation
problem has no solution.

\section{Visualising RKHSs}
\label{sec:vis}

It is more expedient to understand RKHS theory by focusing on the
columns $k_i$ of the kernel $K$ rather than on the matrix $K$ itself;
$K$ being a positive semi-definite matrix corresponding to the Gram
matrix of the $k_i$ is a wonderful bonus.  The indexed set of vectors
$k_1,\cdots,k_n$ changes in a desirable way as $V \subset \reals^n$
changes.  This can be visualised explicitly for low-dimensional examples.

The inner product on a two-dimensional vector space $V$ can be
depicted by drawing the ellipse $\{v \in V \mid \langle v,v \rangle
= 1\}$ on $V$ because the inner product is uniquely determined from
the norm, and the ellipse uniquely determines the norm.  Based on
Lemma~\ref{lem:geomK}, the columns $k_1$ and $k_2$ of the kernel
$K$ corresponding to $V \subset \reals^2$ can be found geometrically,
as explained in the caption of Figure~\ref{fig:findK}.

\begin{figure}[h]
\centering
\includegraphics[width=0.7\linewidth]{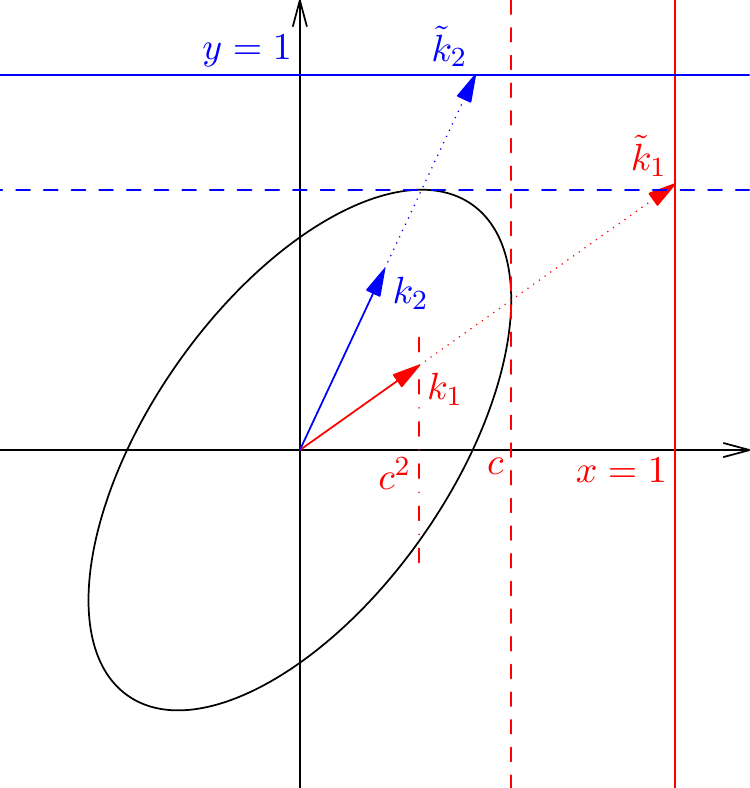}
\caption{
The ellipse comprises all points one unit from the origin. It
determines the chosen inner product on $\reals^2$.  The vector
$\tilde k_1$ is the closest point to the origin on the vertical
line $x=1$.  It can be found by enlarging the ellipse until it first
touches the line $x=1$, or equivalently, as illustrated, it can be
found by shifting the line $x=1$ horizontally until it meets the
ellipse tangentially, represented by the dashed vertical line, then
travelling radially outwards from the point of intersection until
reaching the line $x=1$.  The vector $k_1$ is a scaled version of
$\tilde k_1$. If the dashed vertical line intersects the $x$-axis
at $c$ then $k_1 = c^2 \tilde k_1$. Equivalently, $k_1$ is such
that its tip intersects the line $x=c^2$.  The determination of
$k_2$ is analogous but with respect to the horizontal line $y=1$.
}
\label{fig:findK}
\end{figure}

Figures~\ref{fig:Kpic} and~\ref{fig:Kgraph} reveal how $k_1$ and
$k_2$ vary as the inner product on $V = \reals^2$ changes.
Figure~\ref{fig:Kgraph} shows that $k_1$ and $k_2$ vary smoothly
as the inner product changes. By comparison, an orthonormal basis
formed from the principal and minor axes of the ellipse must have
a discontinuity somewhere, because after a 180 degree rotation, the
ellipse returns to its original shape yet, for example,
$\{e_1,\frac12e_2\}$ rotated 180 degrees is $\{-e_1,-\frac12e_2\}$,
which differs from $\{e_1,\frac12e_2\}$.

\begin{figure}[h]
\centering
\includegraphics[width=\linewidth]{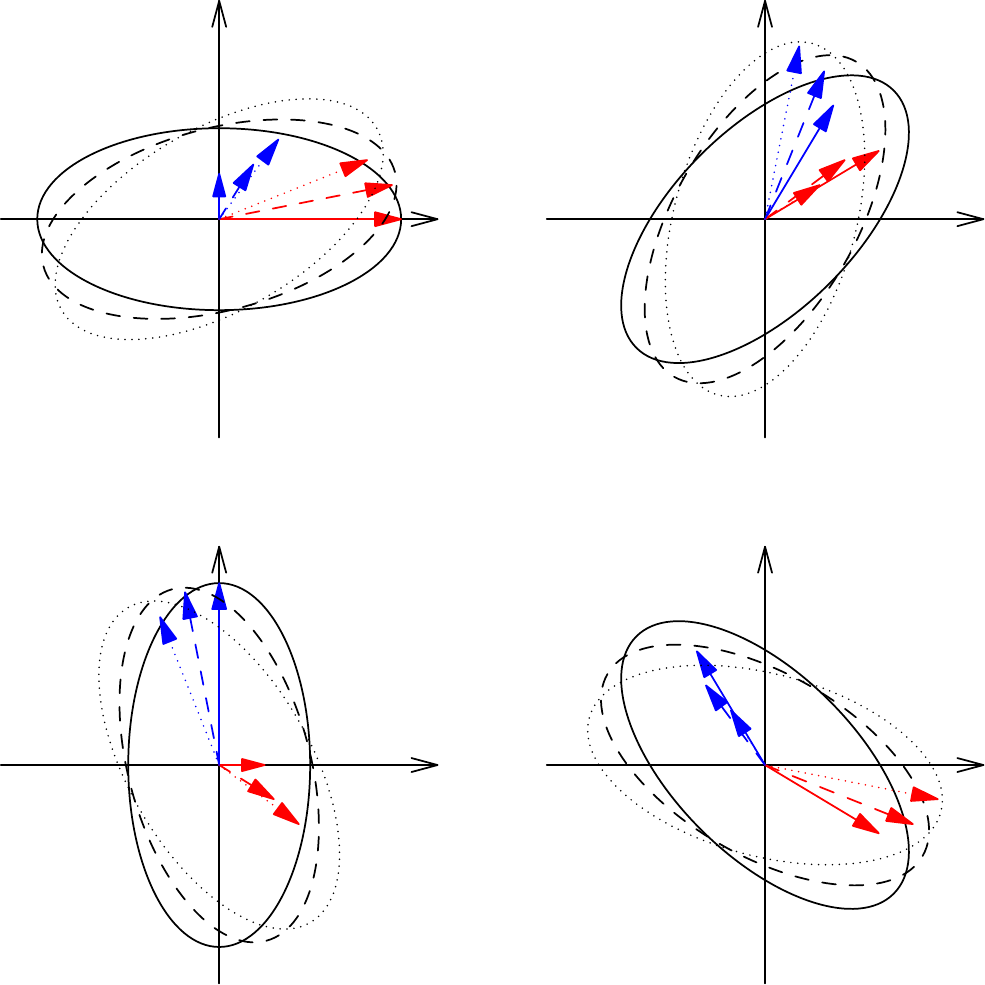}
\caption{
Shown are
the vectors $k_1$ (red) and $k_2$ (blue) corresponding to rotated
versions of the inner product $\langle u,v \rangle = v^\top Q u$
where $Q = \operatorname{diag}\{1,4\}$. The magnitude and angle
of $k_1$ and $k_2$ are plotted in Figure~\ref{fig:Kgraph}.
}
\label{fig:Kpic}
\end{figure}

\begin{figure}[h]
\centering
\includegraphics[width=\linewidth]{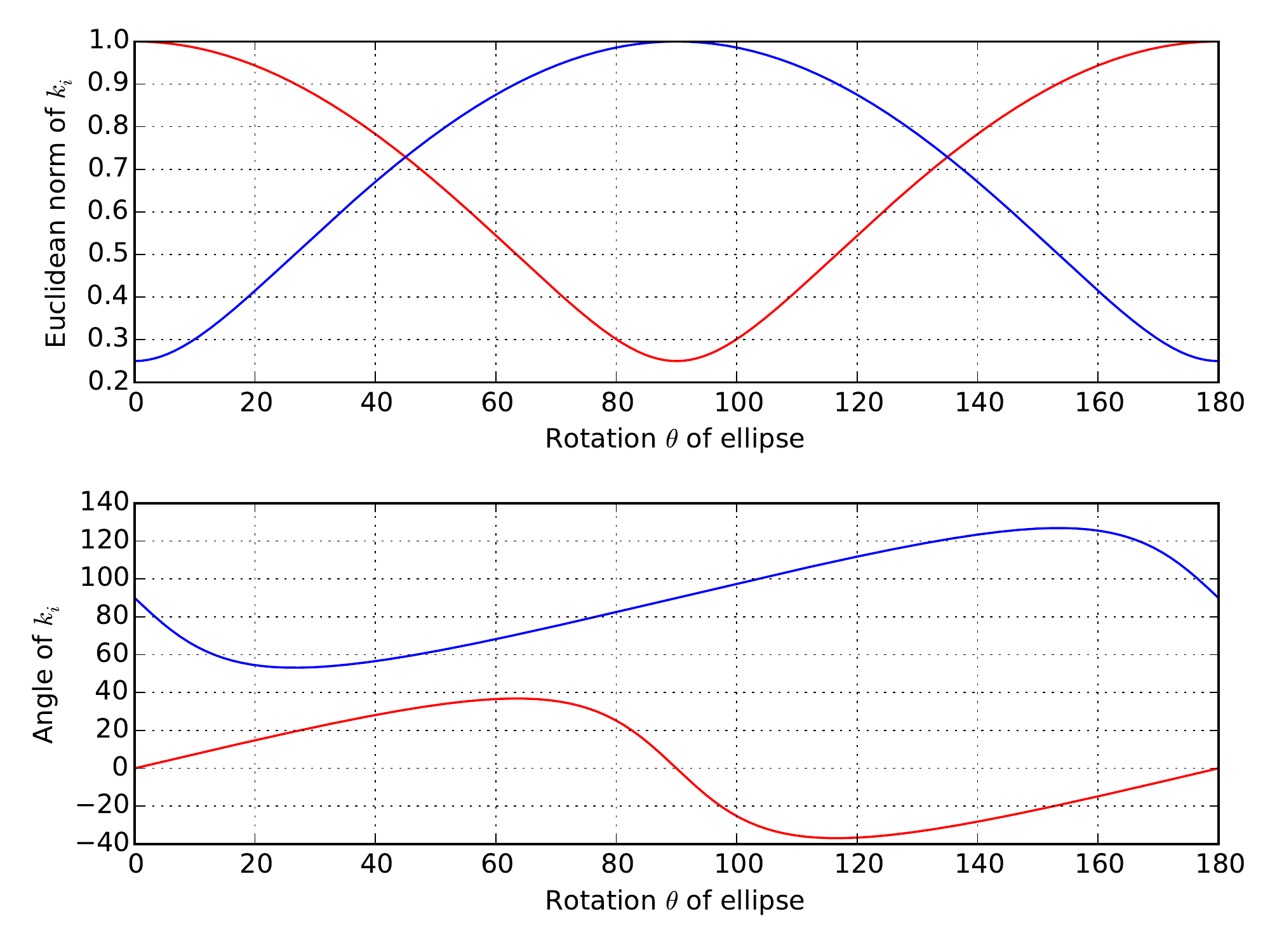}
\caption{
Plotted are the magnitude and angle of $k_1$ (red) and $k_2$ (blue)
corresponding to rotated
versions of the inner product $\langle u,v \rangle = v^\top Q u$
where $Q = \operatorname{diag}\{1,4\}$, as in Figure~\ref{fig:Kpic}.
}
\label{fig:Kgraph}
\end{figure}

Figure~\ref{fig:K1in2} illustrates the kernel of various one-dimensional
subspaces in $\reals^2$. As $V$ is one-dimensional, $k_1$ and $k_2$
must be linearly dependent since they span $V$. Together,
$k_1$ and $k_2$ describe not only $V$ and its inner product, but
also the orientation of $V$ in $\reals^2$.

\begin{figure}[h]
\centering
\includegraphics[width=0.7\linewidth]{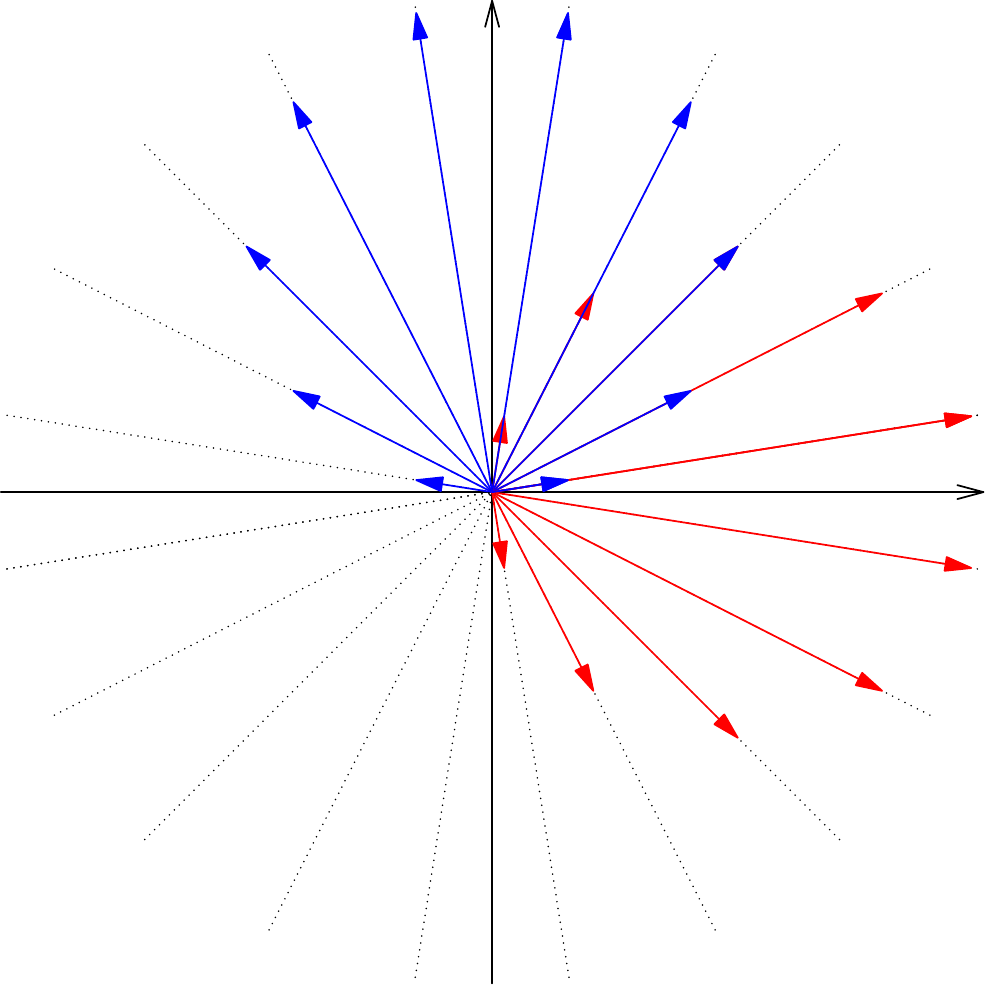}
\caption{
Shown are $k_1$ (red) and $k_2$ (blue) for various one-dimensional
subspaces (black) of $\reals^2$. In all cases, the inner product
is the standard Euclidean inner product. Although not shown,
$k_2$ is zero when $V$ is horizontal, and $k_1$ is zero
when $V$ is vertical. Therefore, the magnitude of the red vectors 
increases from zero to a maximum then decreases back
to zero. The same occurs for the blue vectors.
}
\label{fig:K1in2}
\end{figure}

Figure~\ref{fig:K2to1} portrays how a sequence of two-dimensional
subspaces can converge to a one-dimensional subspace.  From the
perspective of the inner product, the convergence is visualised by
an ellipse degenerating to a line segment.  From the perspective of the
kernel $K = [k_1\ k_2]$, the convergence is visualised by $k_1$ and
$k_2$ individually converging to vectors lying in the same subspace.

\begin{figure}[h]
\centering
\includegraphics[width=0.7\linewidth]{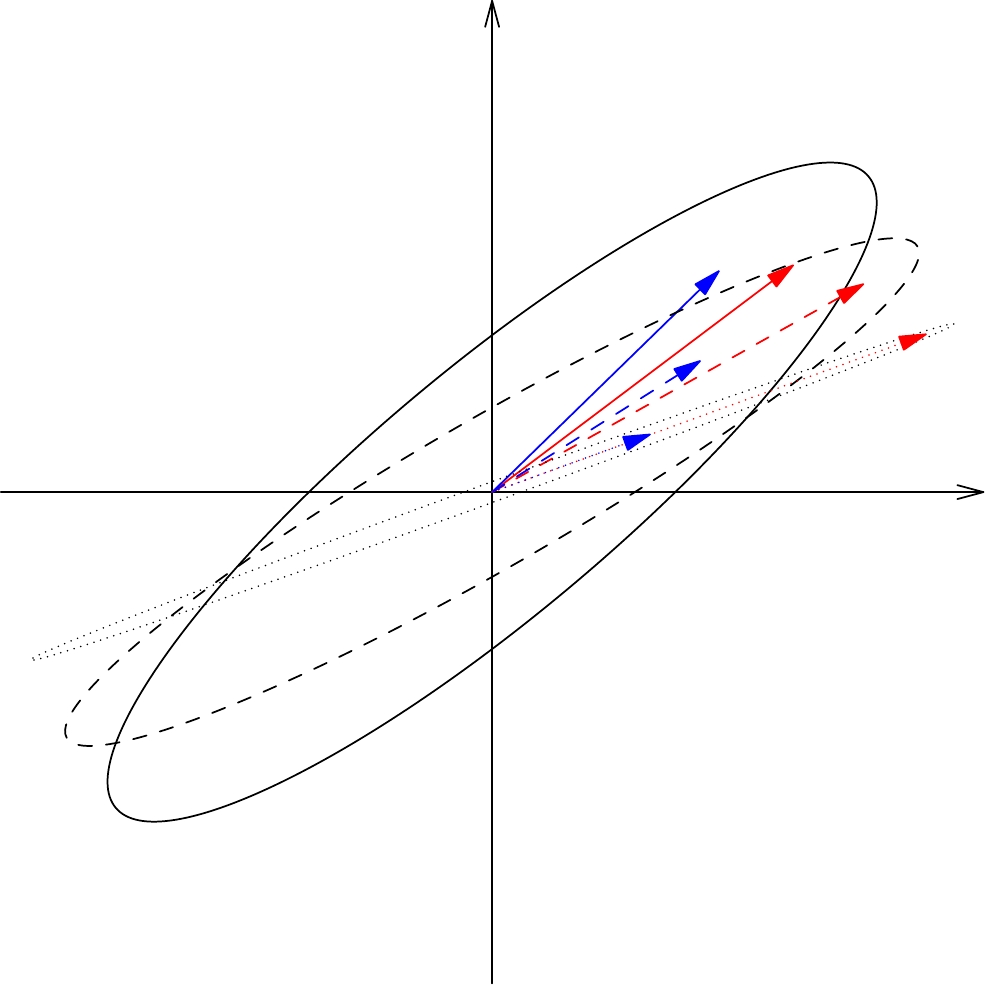}
\caption{
Illustration of how, as the ellipse gets narrower, the two-dimensional
inner product space $V = \reals^2$ converges to a one-dimensional
inner product space. The kernels of the subspaces are represented
by red ($k_1$) and blue ($k_2$) vectors.
}
\label{fig:K2to1}
\end{figure}

The figures convey the message that the $k_i$ are a spanning set
for $V$ that vary continuously with changing $V$. Precisely how
they change is of more algebraic than geometric importance; the
property $\langle k_j, k_i \rangle = K_{ij}$ is very convenient to
work with algebraically.

\section{RKHSs over the Complex Field}

RKHS theory extends naturally to complex-valued vector spaces. In
finite dimensions, this means considering subspaces $V \subset
\cx^n$ where $V$ is equipped with a complex-valued inner product
$\langle \cdot, \cdot \rangle \colon V \times V \rightarrow \cx$.
The only change to Definition~\ref{def:kernel} is replacing
$K=u_1u_1^\top + \cdots + u_ru_r^\top$ by $K=u_1u_1^H + \cdots + u_ru_r^H$
where $H$ denotes Hermitian transpose.

The kernel $K$ will always be Hermitian $(K = K^H)$ and positive
semi-definite.  Swapping the order of an inner product introduces
a conjugation: $\langle u, v \rangle = \overline{ \langle v, u
\rangle }$. Beyond such minor details, the real-valued and
complex-valued theories are essentially the same.


\chapter{Function Spaces}

Typifying infinite-dimensional vector spaces are function spaces.
Given an arbitrary set $X$, let $\reals^X = \{ f \colon X \rightarrow \reals\}$
denote the set of all functions from $X$ to $\reals$. It is usually given a
vector space structure whose simplicity belies its usefulness: vector-space
operations are defined \emph{pointwise}, meaning scalar multiplication $\alpha
\cdot f$ sends the function $f$ to the function $g$ given by $g(x) =
\alpha\,f(x)$, while vector addition sends $f+g$ to the function $h$ defined by
$h(x) = f(x) + g(x)$.

The space $\reals^X$ is often too large to be useful on its own, but it contains
useful subspaces, such as the spaces of all continuous, smooth or analytic
functions.  (This necessitates $X$ having an appropriate structure; for example,
it does not make sense for a function $f\colon X \rightarrow \reals$ to be
continuous unless $X$ is a topological space.)  Note that to be a subspace, as
opposed to merely a subset, the restrictions placed on the functions must be
closed with respect to vector-space operations: if $f$ and $g$ are in the subset
and $\alpha$ is an arbitrary scalar then $f+g$ and $\alpha\,f$ must also be in
the subset.  While the set of all continuous functions $f \colon [0,1]
\rightarrow \reals$ on the interval $[0,1] \subset \reals$ is a vector subspace
of $\reals^{[0,1]}$, the set of all functions $f \colon [0,1] \rightarrow
\reals$ satisfying $f(1) \leq 1$ is not.

\section{Function Approximation}

Function approximation is an insightful example of how function
spaces facilitate geometric reasoning for solving algebraic
problems~\cite{Luenberger:1969wv}.

\begin{example}
\label{ex:Fapprox}
Denote by $C_2[0,1]$ the space of continuous functions
$f \colon [0,1] \rightarrow \reals$ equipped with the inner product
$\langle f, g \rangle = \int_0^1 f(x) g(x)\,dx$.
The square of the induced norm is $\|f\|^2 = \langle f, f \rangle =
\int_0^1 \left[ f(x) \right]^2\,dx$.
Let $V$ denote the subspace of $C_2[0,1]$ spanned by the functions
$g_0(x) = 1$, $g_1(x)=x$ and $g_2(x)=x^2$.
In words, $V$ is the space of all polynomials of degree at most two.
Fix an $f$, say, $f(x) = e^x-1$. Consider finding $g \in V$ that
minimises $\| f - g \|^2$.
Figure~\ref{fig:Fapprox} visualises this in two ways.
Graphing $f$ does not help in finding $g$, whereas treating
$f$ and $g$ as elements of a function space suggests $g$ can be
found by solving the linear equations $\langle f-g, v \rangle = 0$
for all $v \in V$.

An orthogonal basis for $V$ is $p_0(x) = 1$, $p_1(x) = 2x-1$
and $p_2(x) = 6x^2 - 6x + 1$. (These are the shifted
Legendre polynomials.) Note $\|p_i\|^2 = (2i+1)^{-1}$.
Write $g$ as $g = \alpha_0 p_0 + \alpha_1 p_1 + \alpha_2 p_2$.
Attempting to solve $\langle f-g,v \rangle = 0$ for $v \in \{p_0, p_1, p_2\}$
leads to the equations $\langle \alpha_i p_i, p_i \rangle = \langle f, p_i
\rangle$. Since
$\langle f, p_0 \rangle = e-2$,
$\langle f, p_1 \rangle = 3-e$ and
$\langle f, p_2 \rangle = 7e-19$,
the coefficients $\alpha_i$ are found to be
$\alpha_0 = e-2$, $\alpha_1 = 9 - 3e$ and $\alpha_2 = 35e-95$.
\end{example}

\begin{figure}[h]
\centering
\includegraphics[width=0.7\linewidth]{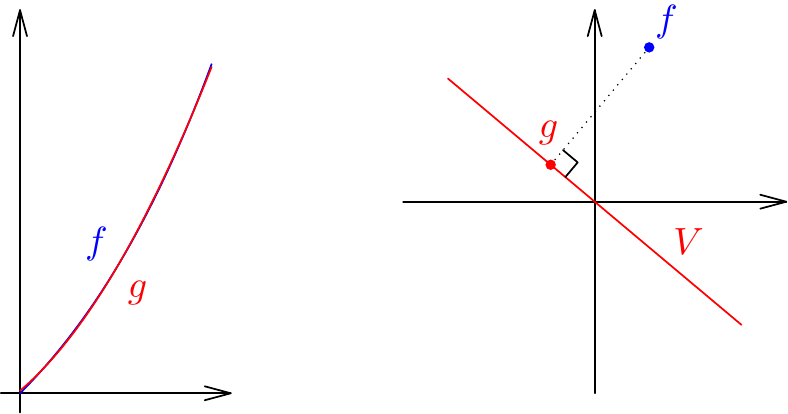}
\caption{
On the left is the direct way of visualising the function approximation
problem: given a function $f$ (blue), find a function $g$ (red)
that best approximates $f$, where $g$ is constrained to
a subclass of functions. Shown here is the second-order polynomial
approximation of $f(x) = e^x-1$ found in Example~\ref{ex:Fapprox}.
It approximates $f$ well, in that the graphs of $f$ and $g$
overlap.  On the right is a geometric visualisation of the
function approximation problem, made possible by representing
functions not by their graphs (left) but as elements of a
vector space (right). The subspace $V$ is the subclass of functions
in which $g$ must lie. Minimising the square of the norm $\|f-g\|$ means
finding the point on $V$ that is closest to $f$. As the norm
comes from an inner product, the point $g$ must be such that $f-g$ is
perpendicular to every function in $V$.
}
\label{fig:Fapprox}
\end{figure}

The simple optimisation problem in Example~\ref{ex:Fapprox} could have been
solved using calculus by setting to zero the derivatives of $\| f - \sum_i
\alpha_i p_i \|^2$ with respect to the $\alpha_i$.  Importantly though, setting
the derivatives to zero in general does not guarantee finding the optimal
solution because the optimal solution need not
exist\footnote{See~\cite{sussmann1997300} for a historical account relating to
existence of optimal solutions.}. When applicable, vector space methods go
beyond calculus in that they can guarantee the optimal solution has been found. 
(Calculus examines local properties whereas being a global optimum is a global
property.)

Infinite-dimensional inner product spaces can exhibit behaviour not
found in finite-dimensional spaces.  The projection of $f$ onto $V$
shown in Figure~\ref{ex:Fapprox} suggesting every minimum-norm
problem has a solution is not necessarily true in infinite dimensions,
as Example~\ref{ex:l2} will exemplify.  Conditions must be imposed
before the geometric picture in Figure~\ref{fig:Fapprox}
accurately describes the infinite-dimensional function approximation
problem.

When coming to terms with infinite-dimensional spaces, one of the
simplest (Hilbert) spaces to contemplate is $l^2$, the space of
square-summable sequences. Elements of $l^2$ are those sequences
of real numbers for which the sum of the squares of each term of
the sequence is finite.  In symbols, $x = (x_1,x_2,\cdots)$ is in
$l^2$ if and only if $\sum_{i=1}^\infty x_i^2 < \infty$. For example,
$(1,\frac12,\frac14,\frac18,\cdots)$ is in $l^2$.  The inner product
on $l^2$ is implicitly taken to be $\langle x, y \rangle =
\sum_{i=1}^\infty x_iy_i$. Note that the square of the induced norm
is $\|x\|^2 = \sum_{i=1}^\infty x_i^2$ and hence the condition for
$x$ to be in $l^2$ is precisely the condition that it have finite
$l^2$-norm.

Approximating an element of $l^2$ by an element of a subspace $V
\subset l^2$ can be considered a function approximation problem
because there is a canonical correspondence between $l^2$ and a
subclass of functions from $\{1,2,\cdots\}$ to $\reals$, just as
there is a canonical correspondence between $R^n$ and functions
from $\{1,\cdots,n\}$ to $\reals$. (The subclass comprises those
functions $f$ for which $\sum_{i=1}^\infty f(i)^2 < \infty$.)

\begin{example}
\label{ex:l2}
Let $V \subset l^2$ be the vector space of sequences with only a
finite number of non-zero terms, thought of as a subspace of $l^2$.
There is no element $g$ of $V$ that minimises $\| f-g \|^2$ when
$f = (1,\frac12,\frac14,\cdots) \in l^2$. Given any approximation
$g$ of $f$, it can always be improved by changing a zero term in
$g$ to make it match the corresponding term in $f$. A rigorous proof
goes as follows. Assume to the contrary there exists a $g \in V$
such that $\| f-\tilde g \| \geq \| f - g \|$ for all $\tilde g \in
V$.  Let $i$ be the smallest integer such that $g_i = 0$. It 
exists because only a finite number of terms of $g$ are non-zero.
Let $\tilde g$ be a replica of $g$ except for setting its $i$th
term to $\tilde g_i = f_i$. Let $e$ be the zero sequence except for
the $i$th term which is unity. Then $f-g = (f-\tilde g) + f_i e$.
Since $\langle f - \tilde g, e \rangle = 0$, $\| f - g \|^2 = \| f
- \tilde g\|^2 + f_i^2 \|e\|^2$, implying $\| f - \tilde g \| < \|
f - g \|$ and contradicting the existence of an optimal element $g$.
\end{example}

Reconciling the \emph{algebra} in Example~\ref{ex:l2}
proving the non-existence of an optimal approximation, with the
\emph{geometry} depicted in Figure~\ref{fig:Fapprox} suggesting
otherwise, requires understanding the interplay between
vector space structures and topological structures induced
by the norm.

Before delving into that topic, it is penetrating to ascertain whether
RKHS theory may be beneficially applied to the function approximation
problem.  A novel feature of this primer is considering RKHS theory
from the ``dynamic'' perspective of how $K$ changes in response to
changing $V$. The solution of the interpolation problem in
Examples~\ref{ex:Interp} and~\ref{ex:Interp2} is continuous in $K$,
and in particular, a solution can be found even for a lower-dimensional
$V \subset \reals^n$ by working in $\reals^n$ and taking limits as
the inner product becomes degenerate, much like in Figure~\ref{fig:K2to1}.
The function approximation problem lacks this property because
enlarging $V$ to include $f$ will mean the optimal solution is $g=f$
no matter how close the inner product is to being degenerate. Since
the solution of the function approximation problem does not Respect
Topology it is unlikely RKHS theory will be of assistance.  (It is
irrelevant that the space used in Example~\ref{ex:Fapprox} is not
a RKHS; the function approximation problem could have been posed
equally well in a genuine RKHS.)

No theory is a panacea for all problems. RKHS theory has broad but
not universal applicability.

\section{Topological Aspects of Inner Product Spaces}
\label{sec:closed}

Reproducing kernel Hilbert spaces have additional properties that more
general Hilbert spaces and other inner product spaces do not necessarily enjoy.
These properties were chosen to make RKHSs behave more like finite-dimensional
spaces.  Learning what can ``go wrong'' in infinite dimensions helps put RKHS
theory in perspective.

Some subspaces should not be drawn as sharp subspaces, as in
Figure~\ref{fig:Fapprox}, but as blurred subspaces denoting the existence of
points not lying in the subspace yet no more than zero distance away from it.
The blur represents an infinitesimally small region as measured by the norm, but
need not be small in terms of the vector space structure. Every point in $l^2$
is contained in the blur around the $V$ in Example~\ref{ex:l2} despite $V$ being
a very small subspace of $l^2$ in terms of cardinality of basis vectors.  (A
Hamel basis for $l^2$ is uncountable while a Hamel basis for $V$ is countable.
Note that in an infinite-dimensional Hilbert space, every Hamel basis is
uncountable.)

Since the blur is there to represent the geometry coming from the
norm, the blur should be drawn as an infinitesimally small
region around $V$. An accurate way of depicting $V \subset l^2$
in Example~\ref{ex:l2} is by using a horizontal plane for $V$ and
adding a blur of infinitesimal height to capture the rest of $l^2$.
This visually implies correctly that there is no vector in $l^2$
that is orthogonal to every vector in $V$. Any $f \in l^2$ that is
not in $V$ makes an infinitesimally small angle with $V$, for
otherwise the vectors $f, 2f, 3f, \cdots$ would move further and
further away from $V$.

As these facts are likely to generate more questions than answers,
it is prudent to return to the beginning and work slowly towards
deriving these facts from first principles.

An inner product induces a norm, and a norm induces a topology.
It suffices here to understand a topology as a rule for determining
which sequences converge to which points. A norm coming from an inner
product determines the inner product uniquely.  It is pragmatic to
think of the norm as the more dominant structure.  The presence of
an inner product means the square of the norm is ``quadratic'', a
very convenient property.

Inner product spaces therefore have two structures: a vector space
structure and a norm whose square is ``quadratic''. The axioms of
a normed space ensure these two structures are compatible.
For example, the triangle inequality $\| x + y \| \leq \|x\|
+ \|y\|$ involves both the norm and vector addition.  Nevertheless,
the two structures capture different aspects that are conflated in
finite dimensions.

A norm induces a topology, and in particular, a norm determines
what sequences have what limit points. Topology is a weaker concept
than that of norm because different norms may give rise to the same
topology. In fact, in \emph{finite} dimensions, every norm gives
rise to the same topology as every other norm: if $x_k \rightarrow
x$ with respect to one norm $\| \cdot \|$, meaning $\| x_k - x \|
\rightarrow 0$, then $x_k \rightarrow x$ with respect to any other
norm.

The first difference then in infinite dimensions is that different
norms can induce different topologies.

\begin{example}
Let $\|\cdot\|_2$ denote the standard norm on $l^2$ and let
$\|x\|_\infty = \sup_i |x_i|$ be an alternative norm.  Let $x^{(i)}$
denote the sequence $(\frac1i,\cdots,\frac1i,0,0,\cdots)$ whose
first $i^2$ terms are $\frac1i$ and the rest zero.  Then
$x^{(1)},x^{(2)},\cdots$ is a sequence of elements of $l^2$.  Since
$\| x^{(i)} \|_\infty = \frac1i \rightarrow 0$, the sequence converges
to the zero vector with respect to the norm $\|\cdot\|_\infty$.
However, $\| x^{(i)} \|_2 = 1$ does not converge to zero and hence
$x^{(i)}$ does not converge to the zero vector with respect to the
norm $\|\cdot\|_2$. The two norms induce different topologies.
\end{example}

A second difference is that an infinite dimensional subspace need
not be topologically \emph{closed}.  A subspace $V$ of a normed
space $W$ is closed in $W$ if, for any convergent sequence
$v_1,v_2,\cdots \rightarrow w$, where the $v_i$ are in $V$ but $w$
need only be in $W$, it is nevertheless the case that $w$ is in $V$.
(Simply put, $V$ is closed if every limit point of every sequence in $V$ is
also in $V$.)  The subspace $V$ in Example~\ref{ex:l2} is not closed
because the sequence $(1,0,\cdots)$, $(1,\frac12,0,\cdots)$,
$(1,\frac12,\frac14,0, \cdots)$ is in $V$ but its limit
$(1,\frac12,\frac14,\frac18,\cdots) \in l^2$ is not in $V$.

A closed subspace can be drawn as a sharp subspace but any other
subspace should have an infinitesimal blur around it denoting the
existence of elements not in the subspace but which are infinitesimally
close to it, a consequence of $v_1,v_2,\cdots \rightarrow w$ meaning
$\| v_i - w \| \rightarrow 0$.  

In finite dimensions, if $V \subset \reals^n$ and $v_1,v_2,\cdots$
is a sequence of elements of $V$ converging to a point $v \in
\reals^n$, it is visually clear from a diagram that the limit point
$v$ must lie in $V$.

\begin{lemma}
Every finite-dimensional subspace $V \subset W$ of an inner product 
space $W$ is closed.
\end{lemma}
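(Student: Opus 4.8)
The plan is to reduce the statement to the single fact that finite-dimensional inner product spaces are complete (equivalently, that a Cauchy sequence in such a space converges \emph{within} the space), and then use completeness of $W$ only insofar as it is needed to locate candidate limit points. Concretely, suppose $V \subset W$ has finite dimension $r$ and fix an orthonormal basis $u_1,\dots,u_r$ for $V$ (which exists by Gram--Schmidt). Let $v_1,v_2,\dots$ be a sequence in $V$ converging to some $w \in W$. Each $v_k$ has a unique expansion $v_k = \sum_{i=1}^r \alpha_i^{(k)} u_i$ with $\alpha_i^{(k)} = \langle v_k, u_i\rangle$, and by the Pythagorean identity $\|v_k - v_l\|^2 = \sum_{i=1}^r |\alpha_i^{(k)} - \alpha_i^{(l)}|^2$.

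Next I would show that each coordinate sequence $(\alpha_i^{(k)})_k$ is Cauchy in $\reals$. Since $(v_k)$ converges in $W$ it is Cauchy in $W$, so $\|v_k - v_l\| \to 0$ as $k,l \to \infty$; the displayed identity then forces $|\alpha_i^{(k)} - \alpha_i^{(l)}| \to 0$ for each fixed $i$. By completeness of $\reals$, each coordinate sequence has a limit $\alpha_i := \lim_k \alpha_i^{(k)}$. Set $v := \sum_{i=1}^r \alpha_i u_i \in V$. A second application of the Pythagorean identity gives $\|v_k - v\|^2 = \sum_{i=1}^r |\alpha_i^{(k)} - \alpha_i|^2 \to 0$, so $v_k \to v$ in $W$ as well. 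Since limits of sequences in a normed space are unique (a routine consequence of the triangle inequality), $w = v \in V$, which is exactly the assertion that $V$ is closed in $W$.

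The one genuinely substantive point — the ``main obstacle,'' such as it is — is the passage from convergence of the vectors $v_k$ to convergence of the coordinate scalars $\alpha_i^{(k)}$; this is where finite-dimensionality is indispensable, since it is precisely the equivalence of the coordinate norm and the ambient norm on $V$, and it is exactly the phenomenon that fails for the infinite-dimensional $V \subset l^2$ of Example~\ref{ex:l2}. Everything else is bookkeeping: existence of an orthonormal basis, the Pythagorean identity, completeness of $\reals$, and uniqueness of limits. I would keep the proof at the level of these coordinate estimates rather than invoking the general theorem that all norms on a finite-dimensional space are equivalent, both because the inner-product structure makes the estimate a one-line identity and because it fits the primer's stated preference for simple, self-contained arguments.
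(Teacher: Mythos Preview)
Your proof is correct but takes a different route from the paper's. The paper projects the limit point $w$ directly onto $V$: setting $\tilde w = w - \sum_{i=1}^r \langle w, u_i\rangle u_i$, it observes that $\tilde w \perp V$, so Pythagoras gives $\|\tilde w\| \leq \|w - v_k\|$ for every $k$; since the right-hand side tends to zero, $\tilde w = 0$ and hence $w \in V$. Your approach instead reconstructs a candidate limit inside $V$ from the coordinate sequences and then invokes uniqueness of limits in $W$ to identify it with $w$. Both arguments rest on the same orthonormal basis and the same Pythagorean identity, but the paper's is slightly more direct: it never appeals to completeness of $\reals$ or builds a new vector, it simply squeezes the orthogonal residual of $w$ to zero. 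Your version, by contrast, shows explicitly that $V$ is complete in its own right and only afterwards matches the internal limit to $w$; this dovetails nicely with the surrounding discussion of Cauchy sequences and completions, and it makes transparent exactly where finite-dimensionality enters (the passage from $\|v_k - v_l\|$ to the individual $|\alpha_i^{(k)} - \alpha_i^{(l)}|$), at the cost of one extra step.
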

\begin{proof}
Let $v_1,v_2,\cdots \rightarrow w$ be a convergent sequence,
with $v_i$ in $V$ and $w$ in $W$.
Let $u_1,\cdots,u_r$ be an orthonormal basis for $V$. 
Let $\tilde w = w - \langle w, u_1 \rangle
u_1 - \cdots - \langle w, u_r \rangle u_r$.
\emph{Here, $w-\tilde w$ is the projection of $w$ onto $V$.
In finite dimensions, the projection is always well-defined.}
Then $\tilde w$ is orthogonal to $u_1,\cdots,u_r$. 
By Pythagoras' theorem, $\| \tilde w \| \leq \| w - v_i \|$.
By definition, $v_i \rightarrow w$ implies $\| w - v_i \| \rightarrow 0$,
therefore, $\| \tilde w \|$ must equal zero, implying $w$ lies in $V$.
\end{proof}

A key ingredient in the above proof is that a vector $w \notin V$
can be projected onto $V$ and hence is a non-zero distance away
from $V$.  In Example~\ref{ex:l2} though, there is no orthogonal
projection of $f$ onto $V$.

\begin{lemma}
Define $f$ and $V$ as in Example~\ref{ex:l2}. There is no vector
$\tilde f \in V$ such that $\langle f-\tilde f, v \rangle = 0$ for
all $v \in V$.
\end{lemma}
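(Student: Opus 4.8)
The plan is to argue by contradiction, exploiting the fact that the orthogonality condition against \emph{every} $v \in V$ is far more restrictive than it might first appear, because $V$ contains all the standard basis vectors. Suppose, contrary to the claim, that some $\tilde f \in V$ satisfies $\langle f - \tilde f, v \rangle = 0$ for every $v \in V$. The strategy is to feed cleverly chosen test vectors $v$ into this identity to pin down $\tilde f$ completely, and then observe that the resulting $\tilde f$ cannot actually lie in $V$.

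First I would introduce, for each positive integer $i$, the sequence $e^{(i)} \in l^2$ that is $1$ in position $i$ and $0$ elsewhere. Each $e^{(i)}$ has only one non-zero term, so $e^{(i)} \in V$. Applying the hypothesised orthogonality to $v = e^{(i)}$ gives $\langle f - \tilde f, e^{(i)} \rangle = 0$. The next step is to compute this inner product explicitly: by definition of the $l^2$ inner product, $\langle f - \tilde f, e^{(i)} \rangle = f_i - \tilde f_i$, so we obtain $\tilde f_i = f_i$ for every $i$.

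Finally I would derive the contradiction. By construction of $f$ in Example~\ref{ex:l2}, every coordinate $f_i$ is strictly positive (it equals $2^{-(i-1)}$), so $\tilde f_i = f_i \neq 0$ for all $i$; that is, $\tilde f$ has infinitely many non-zero terms. But membership in $V$ requires only finitely many non-zero terms, so $\tilde f \notin V$, contradicting $\tilde f \in V$. Hence no such $\tilde f$ exists.

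There is really no hard part here: the argument is a direct computation once one notices that the basis vectors $e^{(i)}$ belong to $V$. The only thing worth stating carefully is the justification that $e^{(i)} \in V$ and that $\langle f - \tilde f, e^{(i)} \rangle$ picks out the single coordinate $f_i - \tilde f_i$; everything else is immediate. This lemma is best read alongside Example~\ref{ex:l2}: it is the formal statement that $f$ has no orthogonal projection onto $V$, which is precisely why the closedness argument for finite-dimensional subspaces breaks down in this infinite-dimensional setting.
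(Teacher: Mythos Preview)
Your proof is correct and follows essentially the same approach as the paper: both arguments test $f-\tilde f$ against the standard basis vectors $e^{(i)}\in V$ to force $\tilde f_i=f_i$ at the relevant coordinates. The paper phrases it directly (pick the first index $i$ where $\tilde f_i=0$ and observe $\langle f-\tilde f,e^{(i)}\rangle=f_i\neq 0$), whereas you phrase it by contradiction (orthogonality against every $e^{(i)}$ forces $\tilde f=f\notin V$); these are contrapositives of one another.
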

\begin{proof}
Let $\tilde f \in V$ be arbitrary. Let $i$ be the smallest integer
such that the $i$th term of $\tilde f$ is zero. Then the $i$th term
of $f-\tilde f$ is non-zero and the inner product of
$f - \tilde f$ with the element $(0,\cdots,0,1,0,\cdots,0) \in V$
having 1 for its $i$th term, is non-zero.
\end{proof}

The space $l^2$, as a vector space, is larger than it looks. The
sequences $x^{(i)} = (0,\cdots,0,1,0,\cdots,0)$, with the unit
appearing in the $i$th term, do not form a (Hamel) basis for $l^2$.
The $x^{(i)}$ span $V$ in Example~\ref{ex:l2} but do not come close
to spanning $l^2$. The $x^{(i)}$ are countable in number whereas a
basis for $l^2$ must necessarily be uncountable. This stems from
the span of a set of vectors being the vectors representable by a
\emph{finite} linear combination of vectors in the set.  Elements
in $l^2$ having infinitely many non-zero terms cannot be written
as finite linear combinations of the $x^{(i)}$.  (Since infinite
summations involve taking limits, only finite linear combinations
are available in spaces with only a vector space structure.  A prime
mathematical reason for introducing a topology, or \emph{a fortiori}
a norm, is to permit the use of infinite summations.)

From the norm's perspective though, $l^2$ is not large: any point
in $l^2$ is reachable as a limit of a sequence of points in the
span of the $x^{(i)}$. Precisely, $(f_1,f_2,\cdots) \in l^2$ is the
limit of $(f_1,0,\cdots)$, $(f_1,f_2,0,\cdots)$, $\cdots$.  This
shows \emph{the norm is coarser than the vector space structure.}
The vector space structure in Example~\ref{ex:l2} places $f$ away
from $V$ yet the norm places $f$ no more than zero distance from
$V$.

To demonstrate the phenomenon in Example~\ref{ex:l2} is not esoteric,
the following is a more familiar manifestation of it. Once again,
the chosen subspace $V$ is not closed.

\begin{example}
Define $C_2[0,1]$ as in Example~\ref{ex:Fapprox}, but this time
take $V$ to be the subspace of all polynomials on the unit interval
$[0,1]$. Then there is no polynomial $g \in V$ that is closest to
$f(x) = \sin(x)$ because $f$ can be approximated arbitrarily
accurately on $[0,1]$ by including more terms of the Taylor series
approximation $\sin(x) = 1-\frac{x^3}{3!} + \frac{x^5}{5!} - \cdots$.
\end{example}

To summarise, it is sometimes possible to extend an inner product
on an infinite dimensional space $V$ to an inner product on $V
\oplus \operatorname{span}\{f\}$ so that there is a
sequence in $V$ converging to $f$. Here, $f$ is a new vector being
added to form a larger space.  The affine spaces $cf + V$ for $c
\in \reals$ would then be infinitesimally close to each other,
crammed one on top of the other.  Similarly, $l^2$ is obtained from
the $V$ in Example~\ref{ex:l2} by adding basis vectors that are all
crammed flat against $V$, hence the earlier suggestion of visualising
$l^2$ as an infinitesimal blur around $V$.

\section{Evaluation Functionals}

Evaluation functionals play a crucial role in RKHS theory. In finite
dimensions, they went unnoticed as the linear functionals $v \mapsto
e_i^\top v$.

Let $V$ be a subspace of $\reals^X$ where $X$ is an arbitrary set.
An element $f$ of $V$ is therefore a function $f \colon X \rightarrow
\reals$ and can be evaluated pointwise. This means mathematically
that for any $x \in X$, there is a function $l_x \colon V \rightarrow
\reals$ given by $l_x(f) = f(x)$. To become familiar with this
definition, the reader is invited to verify $l_x$ is linear.

If there is a norm on $V$ then it can be asked whether the norm
``controls'' the pointwise values of elements of $V$. The following
examples are intended to convey the concept prior to a rigorous
definition.

\begin{example}
\label{ex:tent}
Define $C_2[0,1]$ as in Example~\ref{ex:Fapprox}.  Let $f(x)$ be
the tent function given by $f(x) = 8x+4$ for $x \in [-\frac12,0]$,
$f(x) = 4-8x$ for $x \in [0,\frac12]$ and $f(x) = 0$ otherwise.
Let $f_i(x) = i\,f\!\left(i^2(x-\frac12)\right)$ for $x \in [0,1]$.
Then $\| f_i \| = \frac1i$ decreases to zero yet $f_i(\frac12)$
diverges to infinity.  No matter how small the norm is, there are
elements of $C_2[0,1]$ taking on arbitrarily large values pointwise.
\end{example}

The next example uses a version of the Cauchy-Schwarz inequality:
\begin{equation}
\label{eq:CSi}
\left( \int_a^b g(t)\,dt \right)^2 \leq (b-a)\,\int_a^b g(t)^2\,dt.
\end{equation}
It is valid for any real-valued $a$ and $b$, not just when $a < b$.

\begin{example}
Let $C^1[0,1]$ be the space of continuously differentiable functions.
(If $f \in C^1[0,1]$ then $f'(x)$ exists and is continuous.) Define
a norm by $\|f\|^2 = \int_0^1 f(x)^2\,dx + \int_0^1 f'(x)^2\,dx$.
It is shown below that $\max_x | f(x) | \leq 2 \| f \|$. The
norm controls the values of $f(x)$ for all $x$.

Let $c = \max_x |f(x)|$. By considering $-f$ instead of $f$ if
necessary, it may be assumed that $c = \max_x f(x)$. If $f(x) \geq
\frac{c}2$ for all $x$ then $\|f\| \geq \frac{c}2$ and thus $\max_x
| f(x) | \leq 2 \| f \|$.  Alternatively, there must exist a $y \in
[0,1]$ such that $f(y)=\frac{c}2$.  Let $x \in [0,1]$ be such that
$f(x)=c$.  Then $2\int_y^x f'(t)\,dt = c$ because $f(x) = f(y) +
\int_y^x f'(t)\,dt$.  Applying (\ref{eq:CSi}) with $g(t)=f'(t)$
yields $c^2 \leq 4(x-y)\int_y^x f'(t)^2\,dt$, from which it follows
that $c^2 \leq 4 \int_0^1 f'(t)^2\,dt$, that is, $\|f\| \geq
\frac{c}2$.  Thus, $\max_x | f(x) | \leq 2 \| f \|$.
\end{example}

Linear functionals, such as $l_x$, fall into two categories, bounded
and unbounded, based on whether $c_x = \sup_{f \in V} |l_x(f)| \cdot
\| f \|^{-1}$ is finite or infinite. If $l_x$ is bounded then
$|l_x(f)| \leq c_x\,\|f\|$ for all $f$, demonstrating that the norm
being small implies $f(x) = l_x(f)$ is small in magnitude.

A key requirement for $V \subset \reals^X$ to be a RKHS is for the
$l_x$ to be bounded for all $x$. This implies that if $f_n \in V$
is a sequence converging in norm, meaning $\| f_n - f \| \rightarrow
0$, then the sequence also converges pointwise, meaning $f_n(x)
\rightarrow f(x)$.  This requirement was not mentioned earlier
because $l_x$ is automatically bounded when $V$ is finite dimensional.


\chapter{Infinite-dimensional RKHSs}
\label{sec:idRKHS}

The finite-dimensional RKHS theory introduced earlier suggests the
general theory should take the following form. Fix an arbitrary set
$X$. Endow a subspace $V$ of the function space $\reals^X$ with an
inner product.  RKHS theory should associate with the tuple
$(V,\langle\cdot,\cdot\rangle,X)$ a set of vectors in $\reals^X$
that span $V$, that vary continuously as $(V,\langle\cdot,\cdot\rangle)$
changes, and that encode the inner product on $V$. Furthermore, by
analogy with Definition~\ref{def:kernel}, an appropriate set should
be the vectors $k_x$ satisfying $\langle f, k_x \rangle = f(x)$ for
all $f \in V$ and $x \in X$.

The above can be achieved with only several minor adjustments.
Requiring the span of the $k_x$ to be $V$ is too demanding.  For
example, a basis for $l^2$ would necessarily be uncountable and cannot even be
constructed explicitly, whereas the countable set of vectors $(1,0,\cdots)$,
$(0,1,0,\cdots)$, $\cdots$ suffices for working with $l^2$ if taking limits is
acceptable.  Although seemingly wanting to reconstruct $V$ by taking the
topological closure of the span of the $k_x$, this would necessitate endowing
$\reals^X$ with a topology, and there might not be a topology on $\reals^X$
compatible with all possible choices of $(V,\langle\cdot,\cdot\rangle)$.
Instead, the mathematical process of completion (\S\ref{sec:completions}) can be
used to achieve the same aim of adding to the span of the $k_x$ any
additional vectors that ``should be'' infinitesimally close to the $k_x$. A
Hilbert space is an inner product space that is complete, hence the above can be
summarised by saying RKHS theory requires $(V,\langle\cdot,\cdot\rangle)$ to be
a Hilbert space.

There may not exist a $k_x$ such that $\langle f, k_x \rangle =
f(x)$ for all $f \in V$ because the linear functional $f \mapsto
\langle f, k_x \rangle$ is always bounded whereas Example~\ref{ex:tent}
showed $f \mapsto f(x) = l_x(f)$ need not be bounded. Another
advantage of requiring $V$ to be a Hilbert space is that any bounded
linear functional can be written as an inner product. (This is the
Riesz representation theorem.) Therefore, a necessary and sufficient
condition for $\langle f, k_x \rangle = f(x)$ to have a solution
$k_x$ is for $l_x$ to be bounded. Requiring $l_x$ to be bounded for
all $x \in X$ is the second requirement RKHS
theory places on $(V,\langle\cdot,\cdot\rangle)$.

RKHSs are similar to Euclidean spaces.
A feature of Euclidean space is the presence of coordinate functions
$\pi_i\colon \reals^n \rightarrow \reals$ sending $(x_1,\cdots,x_n)$
to $x_i$. These coordinate functions are continuous.  A
RKHS replicates this: if $V \subset \reals^X$ is a RKHS, the
coordinate functions $\pi_x \colon V \rightarrow \reals$ sending
$f$ to $f(x)$ are continuous by definition. (Recall that a linear functional is
continuous if and only if it is bounded.)

\section{Completions and Hilbert Spaces}
\label{sec:completions}

If $V \subset W$ is not closed (\S\ref{sec:closed}) then there are points
in $W$ not in $V$ but infinitesimally close to $V$. The closure
$\bar V$ of $V$ in $W$ is the union of these infinitesimally close
points and $V$.  Perhaps a new $W$ can be found though for
which $\bar V \subset W$ is not closed?

Given a normed space $V$, there is a unique (up to an isometric
isomorphism) normed space $\hat V \supset V$, called
the \emph{completion} of $V$, such that every point
in $\hat V$ is infinitesimally close to $V$ and there is no normed
space $W$ for which $\hat V \subset W$ is not closed. (Here,
the norm on $W$ must agree with the norm on $\hat V$ which must
agree with the norm on $V$.) This means
there is one and only one way to enlarge $V$ maximally by adding
infinitesimally close points.  Once this is done, no more points
can be added infinitesimally close to the enlarged space $\hat V$.

This fact permits a refinement of the previous visualisation of a
non-closed subspace as being blurred. If $V \subset W$ is not
closed then there are dents on the surface of $V$.  Pedantically,
these dents are straight line scratches since $V$ is a vector space.
Filling in these scratches closes $V$ in $W$. Choosing a different
$W$ cannot re-open these scratches but may reveal scratches elsewhere.
The completion $\hat V$ results from filling in all possible scratches.

Scratches can be found without having to construct a $W$.  A sequence
$v_n$ in a normed space $V$ is \emph{Cauchy} if, for all $\epsilon
> 0$, there exists a positive integer $N$ such that $\| v_n - v_m
\| < \epsilon$ whenever $n,m \geq N$. Every convergent sequence is
a Cauchy sequence, and a sequence that is Cauchy ``should'' converge;
if a Cauchy sequence in $V$ does not converge then it ``points''
to a scratch.

\paragraph*{Remark}
The distance between successive terms of a convergent sequence must
go to zero: $1,\frac12,\frac13,\cdots \rightarrow 0$ implies
$|\frac1n - \frac1{n+1}| \rightarrow 0$.  The converse is false
though: successive terms of $s_n = \sum_{i=1}^n i^{-1}$ become
infinitesimally close but too slowly to prevent $s_n$ from diverging
to infinity.  Hence Cauchy imposed the stronger requirement of
non-successive terms becoming infinitesimally close.

To demonstrate that Cauchy sequences point to scratches, let $v_n
\in V$ be a non-convergent Cauchy sequence. Enlarge $V$ to $V \oplus
\operatorname{span}\{f\}$ where $f$ is a new vector. The aim is to
construct a norm on $V \oplus \operatorname{span}\{f\}$ agreeing
with the norm on $V$ and placing $f$ precisely at the limit of the
Cauchy sequence, that is, $v_n \rightarrow f$.  Extending the norm
requires defining $\| c f + g \|$ for all $c \in \reals$ and $g \in
V$. Since the aim is for $v_n \rightarrow f$, the obvious choice
to try is $\| c f + g \| = \lim_{n \rightarrow \infty} \| c v_n +
g \|$. It can be shown the proposed norm really is a norm; it
satisfies the requisite axioms. That it extends the original norm
is clear; when $c=0$, the new and old norms agree.  Finally,
$v_n \rightarrow f$ because $\lim_n \| f - v_n \| = \lim_n \lim_m
\| v_m - v_n \| = 0$, the last equality due to $v_n$ being Cauchy.

Once a scratch is filled in, it cannot be re-filled due to uniqueness
of limits in normed spaces.  If an attempt was made to place $g \in
V \oplus \operatorname{span}\{f,g\}$ in the location pointed to by
$v_n$, then $v_n \rightarrow g$, yet from above, $v_n \rightarrow
f$, hence $f$ and $g$ must be the same point.

Textbooks explain how $V$ is completed by creating a new space
comprising all Cauchy sequences in $V$ then quotienting by declaring
two Cauchy sequences as equivalent if they converge to the same
limit. This is not explained here since the description above conveys
adequately the underlying principles.

If the norm on $V$ comes from an inner product
then the norm on the completion $\hat V$ also comes from
an inner product, that is, the completion of an inner product
space is itself an inner product space.

An inner product space that is complete (with respect to the norm
induced by the inner product) is called a \emph{Hilbert space}.

A completion is needed for reconstructing $V \subset \reals^X$ from
its kernel $K$ because in general the kernel can describe only a
dense subspace of the original space.  Any space containing this
dense subspace and contained in its completion would have the same
kernel.  A unique correspondence is achieved by insisting $V$ is
complete; given $K$, the space $V$ is the completion of the subspace
described by $K$.

It would be remiss not to mention another reason Cauchy sequences
and completions are important.  Completeness ensures existence
of solutions to certain classes of problems by preventing the
solution from having been accidentally or deliberately removed from
the space.

\begin{example}
Let $x^{(i)} = (0,\cdots,0,1,0,\cdots)$ be the element of $l^2$ having unity as
its $i$th term. Let $f = (1,\frac12,\frac13,\cdots) \in l^2$.  Then $f$ is the
unique solution in $l^2$ to the system of equations $\langle x^{(i)}, f \rangle
= i^{-1}$ for $i=1,2,\cdots$.
Let $V$ be a subspace of $l^2$ omitting $f$ but containing all the $x^{(i)}$,
such as the $V$ in Example~\ref{ex:l2}. Treating $V$ as a vector space, there is
no solution $\tilde f \in V$ to $\langle x^{(i)}, \tilde f \rangle = i^{-1}$ for
$i=1,2,\cdots$.
\end{example}

If a space is complete, an existence proof of a solution
typically unfolds as follows.  Assume the problem
is to prove the existence of a solution $f$ to a differential
equation.  Construct a sequence of approximate solutions $f_n$ by
mimicking how differential equations are solved numerically, with decreasing
step size.
The hope is that $f_n \rightarrow f$, but since $f$ cannot be exhibited
explicity, it is not possible to prove $f_n \rightarrow f$ directly.
Instead, $f_n$ is shown to be Cauchy and therefore, by completeness,
has a limit $\tilde f$. Lastly, $\tilde f$ is verified by some limit
argument to be the solution, thus proving the existence of $f =
\tilde f$. For details, see~\cite{Brown:2004gy}.

Two examples of determining whether a space is complete are now
given.  The proof that $l^2$ is complete is quite standard and
proceeds along the following lines. If $f_1,f_2,\cdots \in l^2$ is
Cauchy then the $i$th term of each $f_j$ form a Cauchy sequence of
real numbers. Since the real numbers are complete --- they are the
completion of the rational numbers --- the $f_j$ converge pointwise
to a limit $f$. The norm of $f$ can be shown to be finite and hence
$f$ is in $l^2$. Finally, it can be shown that $\| f_i - f \|
\rightarrow 0$, proving the $f_i$ have a limit in $l^2$.

The space $C_2[0,1]$ in Example~\ref{ex:Fapprox} is not complete.
This can be seen by considering a sequence of continuous functions
that better and better approximate a square wave. The limit ``should''
exist but a square wave is not a continuous function and hence does
not lie in $C_2[0,1]$. The completion of $C_2[0,1]$ is the space
known as $L_2[0,1]$.  Although $L_2$ spaces are often treated as
if they were function spaces, technically they are not. A meaning
cannot be ascribed to the pointwise evaluation of a function in
$L_2$ because an element of $L_2$ is actually an equivalence class
of functions, any two members of which may differ pointwise on a
set of measure zero. Conditions for when the completion of a function space
remains a function space are given in \S\ref{sec:compfs}.

\section{Definition of a RKHS}

The following definition of a RKHS differs in two inconsequential
ways from other definitions in the literature. Normally, RKHSs over
the complex field are studied because every real-valued RKHS extends
canonically to a complex-valued RKHS.  Often, the evaluation
functionals are required to be continuous whereas here they are
required to be bounded.  A linear
operator is bounded if and only if it is continuous.

\begin{definition}
\label{def:rkhs}
Let $X$ be an arbitrary set and denote by $\reals^X$ the vector space of
all functions $f\colon X \rightarrow \reals$ equipped with
pointwise operations.  A subspace $V \subset
\reals^X$ endowed with an inner product is a \emph{reproducing
kernel Hilbert space (RKHS)} if $V$ is complete and, for every $x
\in X$, the evaluation functional $f \mapsto f(x)=l_x(f)$ on $V$
is bounded.
\end{definition}

The kernel of a RKHS exists and is unique.

\begin{definition}
\label{def:Kfn}
If $V \subset \reals^X$ is a RKHS then its \emph{kernel} is the
function $K\colon X \times X \rightarrow \reals$ satisfying
$\langle f, K(\cdot,y) \rangle = f(y)$ for all $f \in V$ and $y \in
X$. Here, $K(\cdot,y)$ denotes the function $x \mapsto K(x,y)$ and
is an element of $V$. 
\end{definition}

If $V$ is a subspace of $\reals^X$ where $X=\{1,\cdots,n\}$ then
finite dimensionality implies it is complete and its evaluation
functionals bounded. Thus, $V$ is a RKHS by
Definition~\ref{def:rkhs}. The kernel in Definitions~\ref{def:Kfn}
and~\ref{def:kernel} are equivalent: $K(i,j) = K_{ij}$.

\paragraph*{Remark}
The set $X$ in Definition~\ref{def:rkhs} is arbitrary. It need not have a
topology and, in particular, there are no continuity requirements on the
elements of $V$ or the kernel $K$.

\section{Examples of RKHSs}

Examples of RKHSs can be found in~\cite{Kailath:1971hk,
Berlinet:2012uv, Aronszajn:1950tq, Hille:1972wi}, among
other places.

\begin{example}[Paley-Wiener Space]
\label{ex:pws}
Let $V$ consist of bandlimited functions $f
\colon \reals \rightarrow \reals$ expressible as
$f(t) = \frac1{2\pi} \int_{-a}^a F(\omega) e^{\jmath \omega t}\,d\omega$
where $F(\omega)$ is square-integrable.
Endow $V$ with the inner product
$\langle f, g \rangle = \int_{-\infty}^{\infty} f(t) g(t)\,dt$.
Then $V$ is a RKHS with kernel~\cite{Yao:1967tc,
Larkin:1970do}
\begin{equation}
\label{eq:sinc}
K(t,\tau) = \frac{ \sin(a(t-\tau)) }{ \pi(t-\tau) }.
\end{equation}
\end{example}
Using the Fourier transform and its inverse,
the kernel (\ref{eq:sinc}) can be derived as follows
from the requirement $\langle f, K(\cdot,\tau) \rangle = f(\tau)$.
\begin{align*}
f(\tau) &= \frac1{2\pi} \int_{-a}^{a} \left(
    \int_{-\infty}^{\infty} f(t) e^{-\jmath \omega t}\,dt
\right) e^{\jmath \omega \tau}\,d\omega \\
&= \int_{-\infty}^\infty f(t) \left( \frac1{2\pi}
    \int_{-a}^a e^{\jmath\omega(\tau-t)}\,d\omega \right)\,dt \\
&= \int_{-\infty}^\infty f(t) \left( \frac1{2\pi}
    \frac{2}{\tau - t} \sin(a(\tau - t)) \right)\,dt \\
&= \int_{-\infty}^\infty f(t) K(t,\tau) \,dt.
\end{align*}
Completeness of $V$ in Example~\ref{ex:pws} can be proven essentially
as follows.  If $f_n \in V$ is a Cauchy sequence then its Fourier
transform $F_n$ is Cauchy too. Therefore, $F_n$ converges to a
square-integrable function $F$ and $f(t) = \frac1{2\pi} \int_{-a}^a
F(\omega) e^{\jmath \omega t}\,d\omega$ is in $V$. It can be verified
$f_n \rightarrow f$.

\begin{example}
\label{ex:wp}
Let $V$ consist of functions of the form $f\colon [0,1] \rightarrow \reals$
where $f(t)$ is absolutely continuous, its derivative $f'(t)$ (which
exists almost everywhere) is square-integrable, and $f(0)=0$.
Then $V$ is a RKHS when endowed with the inner product
$\langle f,g \rangle = \int_0^1 f'(t) g'(t)\,dt$. The
associated kernel is $K(t,s) = \min\{t,s\}$.
\end{example}

Readers may recognise $K(t,s) = \min\{t,s\}$ as the covariance
function of a Wiener process. If $g(t) = K(t,s)$ then
$g'(t) = 1$ when $t<s$ and $g'(t) = 0$ when $t>s$. Therefore,
\[
\langle f, K(\cdot,s) \rangle = \int_0^s f'(t)\, dt = f(s).
\]

Despite this primer focusing on real-valued functions, the following
classic example of a RKHS comprises complex-valued functions.

\begin{example}[Bergman Space]
\label{ex:berg}
Let $S = \{z \in \mathbb{C} \mid |z|<1\}$ denote the unit disk in
the complex plane. Let $V$ be the vector space of all analytic and
square-integrable functions $f\colon S \rightarrow \mathbb{C}$.
Equip $V$ with the inner product $\langle f, g \rangle = \int_S
f(z) \overline{g(z)}\,dz$. Then $V$ is a RKHS with kernel
\begin{equation}
\label{eq:bergman}
K(z,w) = \frac1\pi\frac1{(1-z\bar w)^2}.
\end{equation}
\end{example}

The above is an example of a Bergman space and its associated kernel.
Choosing domains other than the unit disk produce other Bergman
spaces and associated Bergman kernels~\cite{bergman1970kernel}.
(Generally the Bergman kernel cannot be determined explicitly though.)

\section{Basic Properties}

The fundamental property of RKHSs is the bijective correspondence
between RKHSs and positive semi-definite functions: the kernel of
a RKHS is positive semi-definite and every positive semi-definite
function is the kernel of a unique RKHS~\cite{Aronszajn:1950tq}.

A symmetric function $K \colon X \times X \rightarrow \reals$ is positive
semi-definite (equivalently, of positive type~\cite{Mercer:1909gf})
if, $\forall r \geq 1$, $\forall c_1,\cdots,c_r \in \reals$,
$\forall x_1,\cdots,x_r \in X$,
\begin{equation}
\label{eq:psd}
\sum_{i=1}^r \sum_{j=1}^r c_i c_j K(x_i,x_j) \geq 0.
\end{equation}
Unlike in the complex-valued case when the complex version of (\ref{eq:psd})
forces $K$ to be symmetric~\cite{Berlinet:2012uv}, in the real-valued case, it
is necessary to insist explicitly that $K$ be symmetric: $K(x,y) = K(y,x)$.

The kernel of a RKHS is automatically symmetric;
choosing $f(y) = K(y,x)$ in Definition~\ref{def:Kfn} shows
\begin{equation}
\label{eq:KKK}
\langle K(\cdot,x), K(\cdot,y) \rangle = K(y,x),\qquad
    x,y \in X.
\end{equation}
The symmetry of inner products implies $K(x,y) = K(y,x)$, as claimed.

Confusingly, positive semi-definite functions are often referred to in the
literature as positive definite functions.  The terminology adopted here agrees
with the finite-dimensional case when $K$ is a matrix.

A number of different topologies are commonly placed on function
spaces. The strong topology comes from the norm: the sequence $f_n$
converges strongly to $f$ if $\| f_n - f \| \rightarrow 0$. It
converges weakly if $\langle f_n, g \rangle \rightarrow \langle f,
g \rangle$ for all $g \in V$. (This is weaker because the ``rate
of convergence'' can be different for different $g$.) Pointwise
convergence is when $f_n(x)$ converges to $f(x)$ for all $x \in X$.
In general, the pointwise topology is unrelated to the strong and
weak topologies.  In a RKHS however, strong convergence implies
pointwise convergence, as does weak convergence. Writing $f_n \rightarrow f$
refers to strong convergence.

If $K$ is the kernel of $V \subset \reals^X$, let $V_0 =
\operatorname{span}\{ x \mapsto K(x,y) \mid y \in X\}$.  In words,
$V_0$ is the space spanned by the functions $K(\cdot,y)$ as $y$
ranges over $X$. Clearly $V_0 \subset V$. In the finite-dimensional
case, $V_0$ would equal $V$. In general, $V_0$ is only dense in
$V$. Any $f \in V$ can be approximated arbitrarily accurately
by finite linear combinations of the $K(\cdot,y)$. Moreover, $V$
is the completion of $V_0$ and hence can be recovered uniquely from
$V_0$, where the inner product on $V_0$ is uniquely determined by
(\ref{eq:KKK}). Any Cauchy sequence in $V_0$ converges \emph{pointwise}
to an element of the RKHS $V$.  (It converges strongly by definition
of completion, and strong convergence in a RKHS implies pointwise
convergence.)

\paragraph*{Remark}
Proofs have been omitted partly because they can be found in more
traditional introductory material, including~\cite{Aronszajn:1950tq,
Hille:1972wi, Saitoh:1988vg}, and partly because the above results
should not be surprising once finite-dimensional RKHSs are understood.

\section{Completing a Function Space}
\label{sec:compfs}

If $V$ is incomplete but meets the other conditions in
Definition~\ref{def:rkhs}, a naive hope is for the completion of
$V$ to be a RKHS. This hope is close to the mark: the completion
of $V$ might not be a function space on $X$ but can be made into a
function space on a set larger than $X$.

First a subtlety about completions. Although it is common to speak
of \emph{the} completion $\hat V$ of a normed space $V$, and to
think of $V$ as a subset of $\hat V$, it is more convenient to refer
to any normed space $W$ as a version of the completion of $V$, or
simply, the completion of $V$, provided three conditions are met:
$V$ can be identified with a subspace $V' \subset W$ (that is, $V$
and $V'$ are isometrically isomorphic); every Cauchy sequence in
$V'$ converges to an element of $W$; every element of $W$ can be
obtained in this way.  The usual construction of $\hat V$ produces
a quotient space of Cauchy sequences, where $V$ is identified with
the set $V' \subset \hat V$ consisting of (equivalence classes of)
Cauchy sequences converging to an element of $V$.  The consequence
for RKHS theory is that even though $V \subset \reals^X$ is a
function space, the usual construction produces a completion $\hat
V$ whose elements are not functions.

Whether there exists a version of the completion that is a subset
of $\reals^X$ will be addressed in two parts.  Assuming a RKHS
completion exists, a means for constructing it will be derived,
then necessary and sufficient conditions will be found for the
RKHS completion to exist.

Assume $V_0 \subset \reals^X$ has a completion $V \subset \reals^X$
that is a RKHS containing $V_0$. Let $f_n$ be a Cauchy sequence in
$V_0$. Its limit $f$ can be found pointwise: $f(x) = \langle f,
K(\cdot,x) \rangle = \lim_n \langle f_n, K(\cdot,x) \rangle = \lim_n
f_n(x)$. Hence $V$ can be reconstructed as the set of all functions
on $X$ that are pointwise limits of Cauchy sequences in $V_0$.

This construction can fail in two ways to produce a completion of
an arbitrary $V_0 \subset \reals^X$. The pointwise limit might not
exist, or the pointwise limit might be the same for Cauchy sequences
with distinct limits. Being in a vector space, the latter is
equivalent to the existence of a Cauchy sequence $f_n$ not converging
to zero, meaning $\lim_n \|f_n\| \neq 0$, but whose pointwise limit
is zero: $f_n(x) \rightarrow 0$ for $x \in X$.

\begin{proposition}
\label{pr:comp}
An arbitrary inner product space $V_0 \subset \reals^X$ has a RKHS
completion $V$, where $V_0 \subset V \subset \reals^X$, if and only
if
\begin{enumerate}
\item the evaluation functionals on $V_0$ are bounded;
\item if $f_n \in V_0$ is a Cauchy sequence converging pointwise to
zero then $\|f_n\| \rightarrow 0$.
\end{enumerate}
\end{proposition}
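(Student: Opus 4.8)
The plan is to prove both directions, with the forward direction (necessity) being the easy one and the reverse direction (sufficiency) being where the real work lies. For necessity, suppose a RKHS completion $V$ exists with $V_0 \subset V \subset \reals^X$. Condition 1 is immediate: the evaluation functionals on $V$ are bounded by the definition of a RKHS, and their restrictions to $V_0$ are therefore bounded with the same (or smaller) bound. Condition 2 follows from the reconstruction observation already made in the text: if $f_n \in V_0$ is Cauchy, completeness of $V$ gives a strong limit $f \in V$, and since strong convergence in a RKHS implies pointwise convergence, $f(x) = \lim_n f_n(x)$ for all $x$. So if $f_n \to 0$ pointwise then the strong limit $f$ satisfies $f(x) = 0$ for all $x$, i.e.\ $f$ is the zero function, hence $\|f_n\| \to \|f\| = 0$.

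For sufficiency, assume conditions 1 and 2 and construct $V$ explicitly as the set of all functions $X \to \reals$ that arise as pointwise limits of Cauchy sequences in $V_0$; this is the construction forced by the earlier discussion. The steps I would carry out are: (i) show the pointwise limit of a Cauchy sequence in $V_0$ always exists — here condition 1 is used, since boundedness of $l_x$ gives $|f_n(x) - f_m(x)| \le c_x \|f_n - f_m\|$, so $(f_n(x))_n$ is Cauchy in $\reals$ and converges; (ii) show $V$ is a well-defined vector space of functions and that $V_0 \subset V$ (constant sequences); (iii) define an inner product on $V$ by $\langle f, g \rangle := \lim_n \langle f_n, g_n \rangle$ for Cauchy sequences $f_n \to f$, $g_n \to g$ pointwise, and check this is independent of the choice of representing sequences — this is exactly where condition 2 enters, because two Cauchy sequences in $V_0$ with the same pointwise limit differ by a Cauchy sequence converging pointwise to zero, which by condition 2 has norm going to zero, so the limiting inner products agree; (iv) check the proposed inner product is genuinely positive definite (if $\langle f, f\rangle = 0$ then the representing Cauchy sequence has $\|f_n\| \to 0$, hence $f_n \to 0$ pointwise, hence $f = 0$); (v) verify $V$ is complete, i.e.\ that a Cauchy sequence in $V$ has a pointwise limit lying in $V$ — this needs a diagonal argument, approximating each element of the $V$-Cauchy sequence by an element of $V_0$ and extracting a single Cauchy sequence in $V_0$ whose pointwise limit is the desired limit; (vi) verify the evaluation functionals on $V$ are bounded, which follows by passing the inequality $|f_n(x)| \le c_x \|f_n\|$ to the limit, so $V$ is a RKHS.

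The main obstacle I expect is step (v), completeness of $V$: one must show that the construction does not need to be iterated, i.e.\ that pointwise limits of Cauchy sequences drawn from $V$ (rather than from $V_0$) already lie in $V$. This is the standard "the completion of the completion is the completion" phenomenon, but making it rigorous in the pointwise picture requires care with a diagonalisation — choosing, for each $n$, an element $g_n \in V_0$ with $\|f_n - g_n\|$ small and $|f_n(x_k) - g_n(x_k)|$ small for the first few relevant points, then arguing $g_n$ is Cauchy in $V_0$ with the right pointwise limit. A secondary subtlety is keeping the logical bookkeeping straight between the three notions of convergence (pointwise, and the norm on $V_0$ versus the norm on $V$); the key reconciling fact, used repeatedly, is that by construction a sequence in $V_0$ that is Cauchy in the $V_0$-norm converges \emph{in the $V$-norm} to its pointwise limit, and condition 2 is precisely what guarantees the $V$-norm is well-defined so that this statement makes sense.
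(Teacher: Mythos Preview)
Your proposal is correct and follows essentially the same route as the paper: the necessity argument is identical, and for sufficiency both you and the paper construct $V$ as the set of pointwise limits of Cauchy sequences in $V_0$, use condition~1 for existence of pointwise limits and condition~2 for well-definedness of the norm/inner product.

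The one place you diverge slightly is step~(v). You propose a direct diagonal argument to show $V$ is complete. The paper instead takes a shortcut: it verifies that every Cauchy sequence in $V_0$ converges \emph{in the $V$-norm} to its pointwise limit (via $\lim_n \|f-f_n\| = \lim_n \lim_m \|f_m - f_n\| = 0$), which shows $V_0$ is dense in $V$ and that $V$ receives all the limits the abstract completion $\hat V_0$ would produce; completeness of $V$ then follows by identifying $V$ isometrically with $\hat V_0$. Your diagonalisation unpacks exactly this identification, so the two arguments are really the same one at different levels of explicitness. The paper also omits your step~(vi) and the positive-definiteness check~(iv), calling these ``routine''; your outline is in that sense more complete than the paper's own proof.
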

\begin{proof}
Assume $V$ exists.  Condition (1) holds because the evaluation
functionals on $V$ are bounded.  If $f_n \in V_0$ is Cauchy then
$f_n \rightarrow f$ for some $f$ in $V$.  Since strong convergence
implies pointwise convergence, $f(x)=\lim_n f_n(x)$ for all $x \in X$.
Thus, if $f_n$ converges pointwise to zero then its strong limit $f$ must be
zero, in which case $\| f_n \| = \| f - f_n \| \rightarrow 0$, showing condition
(2) holds.

Conversely, assume (1) and (2) hold. Let $f_n \in V_0$ be Cauchy.
The evaluation functionals being bounded implies $f_n(x)$ is Cauchy
for each $x \in X$, hence $f(x) = \lim_n f_n(x)$ exists. Let $V$
be the set of all such pointwise limits $f$.  Consider endowing $V$
with a norm satisfying $\| f \| = \lim_n \| f_n \|$ where $f_n$ is
a Cauchy sequence converging pointwise to $f$.  Condition (2) ensures
different Cauchy sequences converging pointwise to the same limit
yield the same norm: if $f_n(x) \rightarrow f(x)$ and $g_n(x)
\rightarrow f(x)$ then $f_n(x) - g_n(x) \rightarrow 0$, hence $|
\|f_n\| - \|g_n\| | \leq \| f_n - g_n \| \rightarrow 0$. Omitted
are the routine proofs showing $\|\cdot\|$ satisfies the axioms of
a norm and the parallelogram law, hence giving $V$ an inner product
agreeing with the original inner product on $V_0 \subset V$.

To verify $V$ is the completion of $V_0$, let $f_n \in V_0$ be
Cauchy, with pointwise limit $f \in V$.  The sequence $f_m - f_n$
in $m$ is a Cauchy sequence in $V_0$ converging pointwise to $f-f_n$.
Therefore, $\lim_n \| f - f_n \| = \lim_n \lim_m \|f_m - f_n\| =
0$, proving the pointwise limit of a Cauchy sequence in $V_0$ is
also the strong limit.  Therefore, any $f \in V$ can be written as a
pointwise limit and hence as a strong limit of a Cauchy sequence
in $V_0$, and any Cauchy sequence in $V_0$ converges pointwise and
hence in the strong limit to an element of $V$.  \end{proof}

The following example is taken from~\cite[pp.  349--350]{Aronszajn:1950tq}
and shows condition (2) in the above proposition is not automatically
true.  Complex-valued functions are used because analytic functions
are better behaved than real-analytic functions.

Define $S$ and $\langle\cdot,\cdot\rangle$ as in Example~\ref{ex:berg}
and let $\|\cdot\|$ denote the norm induced from the inner product.
The sequence $a_n = 1-(\frac12)^n$ belongs to $S$. The function
(known as a Blaschke product) $f(z) = \prod_{n=1}^\infty \frac{a_n
- z}{1 - a_n z}$ is analytic and bounded on $S$. It satisfies
$f(a_n)=0$ for all $n$. (An analytic function vanishing on a
convergent set of points need itself only be zero if the limit point
is within the domain of definition. Here, the domain is $S$ and
$a_n \rightarrow 1 \notin S$.) There exists a sequence of polynomials
$f_n(z)$ for which $\lim_{n \rightarrow \infty} \| f_n - f \| = 0$.

These ingredients are assembled into an example by defining $X =
\{a_1,a_2,\cdots\} \subset S$ and taking $V_0$ to be the restriction
to $X$ of all polynomials $p\colon S \rightarrow \mathbb{C}$, keeping
the above norm. The $f_n$ above are a Cauchy sequence of polynomials
on $S$ and \emph{a fortiori} on $X$. Pointwise they converge
to zero: $f_n(x) \rightarrow f(x) = 0$ for $x \in S$.
However, $\|f_n\| \rightarrow \|f\| \neq 0$. Condition (2) in
Proposition~\ref{pr:comp} is not met.

Put simply, $X$ did not contain enough points to distinguish the
strong limit of the Cauchy sequence $f_n$ from the zero function.
In this particular example, enlarging $X$ to contain a sequence and
its limit point would ensure condition (2) is met.

This principle holds generally.  Assume $V_0 \subset \reals^X$ fails
to meet condition (2).  Augment $X$ to become $X' = X \sqcup \hat
V$, the disjoint union of $X$ and the completion $\hat V$ of $V_0$.
Extend each $f \in V_0$ to a function $\tilde f \in \reals^{X'}$
by declaring $\tilde f(x) = f(x)$ if $x \in X$ and $\tilde f(x) =
\langle f, x \rangle$ if $x \in \hat V$. (The inner product is the
one on $\hat V$, and $f \in V_0$ is identified with its corresponding
element in $\hat V$.) Let $V_0'$ be the image of this linear embedding
$f \mapsto \tilde f$ of $V_0$ into $\reals^{X'}$.  The inner product
on $V_0$ gets pushed forwards to an inner product on $V_0'$, that
is, $\langle \tilde f, \tilde g \rangle$ is defined as $\langle f,
g \rangle$. To show condition (2) of Proposition~\ref{pr:comp} is
met, let $\tilde f_n \in V_0'$ be a Cauchy sequence converging
pointwise to zero. Then $\tilde f_n(x) \rightarrow 0$ for all $x
\in X'$, and in particular, $\langle f_n, g \rangle \rightarrow 0$
for all $g \in \hat V$. This implies $f=0$ where $f \in \hat V$ is
the limit of $f_n$. Therefore, $\|f_n\| \rightarrow \|f\| = 0$, as
required.

\section{Joint Properties of a RKHS and its Kernel}

The correspondence between a RKHS $V \subset \reals^X$ and its
kernel $K$ induces a correspondence between certain properties of
$V$ and of $K$. Modifying or combining RKHSs to form new ones
can translate into familiar operations on the corresponding
kernels.  A small selection of examples is presented below.

\subsection{Continuity}

If $X$ is a metric space then it can be asked whether all the
elements of a RKHS $V \subset \reals^X$ are continuous.  Since
$K(\cdot,y)$ is an element of $V$, a necessary condition is for the
functions $x \mapsto K(x,y)$ to be continuous for all $y \in X$.
An arbitrary element of $V$ though is a limit of a Cauchy sequence
of finite linear combinations of the $K(\cdot,y)$. An extra
condition is required to ensure this limit is continuous.

\begin{proposition}
The elements of a RKHS $V \subset \reals^X$ are continuous,
where $X$ is a metric space, if and only if
\begin{enumerate}
\item $x \mapsto K(x,y)$ is continuous for all $y \in X$; and
\item for every $x \in X$ there is a radius $r>0$ such that
$y \mapsto K(y,y)$ is bounded on the open ball $B(x,r)$.
\end{enumerate}
\end{proposition}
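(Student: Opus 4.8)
The plan is to prove both directions by exploiting the key fact, established earlier, that every element of $V$ is a pointwise limit of a Cauchy sequence of finite linear combinations of the functions $K(\cdot,y)$, together with the reproducing property $f(x) = \langle f, K(\cdot,x)\rangle$ and the Cauchy--Schwarz inequality $|f(x)| = |\langle f, K(\cdot,x)\rangle| \leq \|f\|\,\sqrt{K(x,x)}$. The necessity of condition (1) is immediate: $K(\cdot,y)$ is itself an element of $V$, so if all elements of $V$ are continuous then so is each $x \mapsto K(x,y)$. For necessity of (2), I would argue by contradiction: if $y \mapsto K(y,y)$ were unbounded on every ball around some $x_0$, one could choose points $y_n \to x_0$ with $K(y_n,y_n) \to \infty$ and manufacture an element of $V$ — something like a suitably weighted series $\sum_n c_n K(\cdot,y_n)$ with $\sum |c_n| \sqrt{K(y_n,y_n)}$ arranged to converge (so the series is Cauchy in $V$) while the pointwise values at $y_n$ blow up, contradicting continuity at $x_0$. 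The bookkeeping here needs care, but the mechanism is the standard "uniform boundedness fails $\Rightarrow$ build a bad element" construction.

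For sufficiency, assume (1) and (2) and fix $x_0 \in X$; I want to show an arbitrary $f \in V$ is continuous at $x_0$. Choose the radius $r > 0$ from condition (2), so that $K(y,y) \leq M$ for all $y \in B(x_0,r)$ and some constant $M$. The first step is to control the functions $K(\cdot,y)$ for $y$ near $x_0$ in the Hilbert-space norm: using the reproducing property and the Cauchy--Schwarz inequality,
\begin{equation}
\|K(\cdot,y) - K(\cdot,x_0)\|^2 = K(y,y) - 2K(x_0,y) + K(x_0,x_0).
\end{equation}
By condition (1) applied with the second argument fixed at $y$ and at $x_0$, the cross term $K(x_0,y) \to K(x_0,x_0)$ as $y \to x_0$; and $K(y,y) = \langle K(\cdot,y), K(\cdot,y)\rangle$ — here I would note $K(y,y) \to K(x_0,x_0)$ as well, since $|K(y,y) - K(x_0,y)| = |\langle K(\cdot,y) - K(\cdot,x_0), K(\cdot,y)\rangle| \leq \|K(\cdot,y) - K(\cdot,x_0)\| \sqrt{M}$ for $y \in B(x_0,r)$, which together with $K(x_0,y) \to K(x_0,x_0)$ lets one bootstrap. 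The upshot is that $K(\cdot,y) \to K(\cdot,x_0)$ \emph{strongly} in $V$ as $y \to x_0$.

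Granting that, the conclusion is quick: for any $f \in V$ and $y \in B(x_0,r)$,
\begin{equation}
|f(y) - f(x_0)| = |\langle f, K(\cdot,y) - K(\cdot,x_0)\rangle| \leq \|f\|\,\|K(\cdot,y) - K(\cdot,x_0)\| \longrightarrow 0
\end{equation}
as $y \to x_0$, so $f$ is continuous at $x_0$; since $x_0$ was arbitrary, $f$ is continuous on $X$. The main obstacle I anticipate is the bootstrapping argument showing $K(y,y) \to K(x_0,x_0)$ from conditions (1) and (2): continuity of $x \mapsto K(x,y)$ controls the off-diagonal behaviour, and local boundedness of the diagonal is exactly what is needed to convert that into control of the diagonal near $x_0$ — getting these two estimates to feed into each other cleanly, and in the contradiction construction for necessity of (2), making the weighted series simultaneously Cauchy in norm and pointwise-divergent, are the only non-routine parts.
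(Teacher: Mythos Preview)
Your sufficiency argument has a genuine gap: the claim that $K(\cdot,y)\to K(\cdot,x_0)$ \emph{strongly} in $V$ as $y\to x_0$ is false in general under conditions (1) and (2), so the bootstrap cannot be completed. Here is a counterexample. Take $X=\{0\}\cup\{1/n:n\geq 1\}$ with the usual metric, let $\{e_k\}_{k\geq 0}$ be the standard orthonormal basis of $\ell^2$, and define a feature map $\phi(0)=e_0$, $\phi(1/n)=e_0+e_n$, with kernel $K(x,y)=\langle\phi(x),\phi(y)\rangle$. Then for each fixed $y$ the map $x\mapsto K(x,y)$ is continuous (check: $K(1/n,1/m)=1+\delta_{nm}\to 1=K(0,1/m)$ as $n\to\infty$), and $K(y,y)\leq 2$ everywhere, so (1) and (2) hold. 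Every element of the RKHS is continuous, since $f(1/n)=c_0+c_n\to c_0=f(0)$ for any $(c_k)\in\ell^2$. Yet $K(1/n,1/n)=2\not\to 1=K(0,0)$, and
\[
\|K(\cdot,1/n)-K(\cdot,0)\|^2 = K(1/n,1/n)-2K(0,1/n)+K(0,0)=2-2+1=1,
\]
so $K(\cdot,1/n)$ does \emph{not} converge strongly to $K(\cdot,0)$. Conditions (1) and (2) give only weak convergence of $y\mapsto K(\cdot,y)$, not strong convergence; your inequality $|K(y,y)-K(x_0,y)|\leq \delta(y)\sqrt{M}$ is correct but circular, and the resulting quadratic bound on $\delta(y)$ does not force it to zero.

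The fix is to avoid strong convergence altogether and argue via uniform approximation: finite linear combinations $g=\sum_i\alpha_iK(\cdot,y_i)$ are continuous by (1) and dense in $V$; for any $f\in V$ and $y\in B(x_0,r)$, $|f(y)-g(y)|\leq\|f-g\|\sqrt{K(y,y)}\leq\|f-g\|\sqrt{M}$ by (2), so $f$ is a uniform limit of continuous functions on $B(x_0,r)$ and hence continuous at $x_0$.

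For necessity of (2), your hands-on construction is workable but is essentially a reinvention of the uniform boundedness principle. The paper takes the cleaner route: if $y_n\to x_0$ with $K(y_n,y_n)\to\infty$ and every $f\in V$ were continuous, then $\langle f,K(\cdot,y_n)\rangle=f(y_n)\to f(x_0)$ for all $f$, so $K(\cdot,y_n)$ would be weakly convergent and hence norm-bounded by Banach--Steinhaus, contradicting $\|K(\cdot,y_n)\|^2=K(y_n,y_n)\to\infty$. This packages your ``build a bad element'' idea into a single citation.
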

\begin{proof}
See~\cite[Theorem 17]{Berlinet:2012uv}.
\end{proof}

The necessity of the second condition is proved by assuming there
is an $x \in X$ and a sequence $x_n \in B(x,\frac1n)$ satisfying
$K(x_n,x_n) \geq n$. The functions $K(\cdot,x_n)$ grow without
limit: $\| K(\cdot,x_n) \| = K(x_n,x_n) \rightarrow \infty$.  By a
corollary of the Banach-Steinhaus theorem, a weakly convergent
sequence is bounded in norm, therefore, there exists a $g \in V$
such that $\langle g, K(\cdot,x_n) \rangle \nrightarrow \langle g,
K(\cdot,x) \rangle$.  This $g$ is not continuous: $g(x_n) \nrightarrow
g(x)$.

\subsection{Invertibility of Matrices}
\label{sec:invmat}

Equation (\ref{eq:psd}) asserts the
matrix $A_{ij} = K(x_i,x_j)$ is positive semi-definite.
If this matrix $A$ is singular then there exist constants
$c_i$, not all zero, such that (\ref{eq:psd}) is zero. Since
\begin{align*}
\sum_{i=1}^r \sum_{j=1}^r c_i c_j K(x_i,x_j)
&= \sum_{i=1}^r \sum_{j=1}^r c_i c_j
    \langle K(\cdot,x_j), K(\cdot,x_i) \rangle \\
&= \left\langle \sum_{j=1}^r c_j K(\cdot,x_j), \sum_{i=1}^r c_i K(\cdot,x_i)
    \right\rangle \\
&= \left\| \sum_{i=1}^r c_i K(\cdot,x_i) \right\|^2,
\end{align*}
the matrix $A$ is non-singular if and only if $\sum_{i=1}^r c_i
K(\cdot,x_i) = 0$ implies $c_1=\cdots=c_r=0$.  Now, $\sum_{i=1}^r
c_i K(\cdot,x_i) = 0$ if and only if $\sum_{i=1}^r c_i f(x_i) = 0$
for all $f \in V$, where $V$ is the RKHS whose kernel is $K$.
Whenever $V$ is sufficiently rich in functions, there will be no
non-trivial solution to $\sum_{i=1}^r c_i f(x_i) = 0$ and every
matrix of the form $A_{ij} = K(x_i,x_j)$ is guaranteed to be
non-singular. For example, since the corresponding RKHS
contains all polynomials, the kernel (\ref{eq:bergman}) is
strictly positive, and for any collection of points
$x_1,\cdots,x_r$ in the unit disk, the matrix $A_{ij} =
(1-x_i \bar x_j)^{-2}$ is non-singular.

\subsection{Restriction of the Index Set}

If $V$ is a RKHS of functions on the interval $[0,2]$ then a new
space $V'$ of functions on $[0,1]$ is obtained by restricting
attention to the values of the functions on $[0,1]$.  Since two or
more functions in $V$ might collapse to the same function in $V'$,
the norm on $V$ does not immediately induce a norm on $V'$.  Since
a kernel $K$ on $X$ restricts to a kernel $K'$ on $X'$, it is natural
to ask if there is a norm on $V'$ such that $K'$ is the kernel of
$V'$. (The norm would have to come from an inner product.)

When interpolation problems are studied later, it will be seen that
$K(\cdot,x)$ solves the one-point interpolation problem; it is the
function of smallest norm satisfying $f(x) = K(x,x)$.  This suggests
the norm of $f$ should be the smallest norm of all possible functions
in $V$ collapsing to $f$.  This is precisely the case and is now
formalised.

If $X' \subset X$ then any $f \colon X \rightarrow \reals$ restricts
to a function $f|_{X'} \colon X' \rightarrow \reals$ given by
$f|_{X'}(x) = f(x)$ for $x \in X'$.  If $V \subset \reals^X$ is a
RKHS then a new space $V' \subset \reals^{X'}$ results from restricting
each element of $V$ to $X'$.  Precisely, $V' = \{f|_{X'} \mid f \in
V\}$.  Define on $V'$ the norm
\begin{equation}
\label{eq:finfg}
\| f \| = \inf_{\substack{g \in V \\ g|_{X'} = f}} \| g \|.
\end{equation}

\paragraph*{Remark}
In (\ref{eq:finfg}), $\inf$ can be replaced by $\min$ because $V$
is complete.

A kernel $K\colon X \times X \rightarrow \reals$ restricts to a
kernel $K'\colon X' \times X' \rightarrow \reals$ where $K'(x,y) =
K(x,y)$ for $x,y \in X'$. As proved in~\cite[p. 351]{Aronszajn:1950tq}
and~\cite[Theorem 6]{Berlinet:2012uv}, the kernel of $V'$ is $K'$.

\subsection{Sums of Kernels}

If $K_1$ and $K_2$ are kernels on the same set $X$ then
$K = K_1+K_2$ is positive semi-definite and hence a kernel
of a RKHS $V \subset \reals^X$. There is a relatively
straightforward expression for $V$ in terms of the
RKHSs $V_1$ and $V_2$ whose kernels are $K_1$ and $K_2$
respectively.

The space itself is $V = V_1 \oplus V_2$, that is,
\begin{equation}
V = \{ f_1 + f_2 \mid f_1 \in V_1,\ f_2 \in V_2\}.
\end{equation}
The norm on $V$ is given by a minimisation:
\begin{equation}
\label{eq:normsum}
\| f \|^2 = \inf_{\substack{f_1 \in V_1 \\ f_2 \in V_2 \\ f_1 + f_2 = f}}
    \| f_1 \|^2 + \| f_2 \|^2.
\end{equation}
The three norms in (\ref{eq:normsum}) are defined on $V$, $V_1$ and $V_2$, in
that order.
Proofs of the above can be found in~\cite[p. 353]{Aronszajn:1950tq}
and~\cite[Theorem 5]{Berlinet:2012uv}.

If there is no non-zero function belonging to both $V_1$ and $V_2$
then $f \in V$ uniquely decomposes as $f = f_1 + f_2$.  To see this,
assume $f_1 + f_2 = f_1' + f_2'$ with $f_1,f_1' \in V_1$ and $f_2,f_2'
\in V_2$.  Then $f_1 - f_1' = f_2 - f_2'$. As the left-hand side
belongs to $V_1$ and the right-hand side to $V_2$, the assertion
follows. In this case, (\ref{eq:normsum}) becomes $\|f\|^2 = \|f_1\|^2
+ \|f_2\|^2$, implying from Pythagoras' theorem that $V_1$ and $V_2$
have been placed at right-angles to each other in $V$.


\chapter{Geometry by Design}
\label{sec:GbD}

The delightful book~\cite{Jiri2010thirty} exemplifies the advantages
of giving a geometry to an otherwise arbitrary collection of points
$\{p_t \mid t \in T\}$. Although this can be accomplished by 
a recipe for evaluating the distance $d(p_t,p_\tau)$ between pairs
of points, a metric space lacks the richer structure of Euclidean
space; linear operations are not defined, and there is no inner
product or coordinate system.

\section{Embedding Points into a RKHS}
\label{sec:eprk}

Assume a recipe has been given for evaluating $\langle p_t, p_\tau
\rangle$ that satisfies the axioms of an inner product.  If the
$p_t$ are random variables then the recipe might be $\langle p_t,
p_\tau \rangle = E[p_t p_\tau]$. It is not entirely trivial to
construct an inner product space and arrange correctly the $p_t$
in it because the recipe for $\langle p_t, p_\tau \rangle$ may imply
linear dependencies among the $p_t$. It is not possible to
take by default $\{p_t \mid t \in T\}$ as a basis for the space.

RKHSs have the advantage of not requiring the kernel to be non-singular.
The set $\{p_t \mid t \in T\}$ can serve directly as an ``overdetermined
basis'', as now demonstrated. Note $T$ need not be a finite set.

Define $K(\tau,t) = \langle p_t, p_\tau \rangle$. If the inner
product satisfies the axioms required of an inner product then
$K\colon T \times T \rightarrow \reals$ is positive semi-definite
and therefore is the kernel of a RKHS $V \subset \reals^T$.
Just like an element of $\reals^2$ can be drawn as a point
and given a label, interpret the element $K(\cdot,t)$ of $V$
as a point in $\reals^T$ labelled $p_t$. In this way, the $p_t$ have
been arranged in $V$. They are arranged correctly because
\begin{equation}
\label{eq:Kpt}
\langle K(\cdot,t), K(\cdot,\tau) \rangle = K(\tau, t) =
\langle p_t, p_\tau \rangle.
\end{equation}

Not only have the points $p_t$ been arranged correctly in the RKHS, they have
been given coordinates: since $p_\tau$ is represented by $K(\cdot,\tau)$, its
$t$th coordinate is $\langle K(\cdot,\tau), K(\cdot, t) \rangle = K(t,\tau)$. 
This would not have occurred had the points been placed in an arbitrary Hilbert
space, and even if the Hilbert space carried a coordinate system, the assignment
of coordinates to points $p_t$ would have been \emph{ad hoc} and lacking the
reproducing property.

Even if the $p_t$ already belong to a Hilbert space,
embedding them into a RKHS will give them a pointwise coordinate
system that is consistent with taking limits (i.e., strong convergence implies
pointwise convergence). See \S\ref{sec:sle} and \S\ref{sec:asp} for examples of
why this is beneficial.

Let $\Hs$ be a Hilbert space, $\{p_t \mid t \in T\} \subset \Hs$
a collection of vectors and $V \subset \reals^T$ the RKHS whose
kernel is $K(\tau,t) = \langle p_t, p_\tau \rangle$.  Then $V$ is
a replica of the closure $U$ of the subspace in $\Hs$ spanned by
the $p_t$; there is a unique isometric isomorphism
$\phi\colon U \rightarrow V$ satisfying $\phi(p_t) = K(\cdot,t)$.
An isometric isomorphism preserves the linear structure and
the inner product. In particular, $\langle \phi(p), \phi(q)
\rangle = \langle p, q \rangle$.  The $t$th coordinate of $u \in
U$ is defined to be the $t$th coordinate of $\phi(u)$, namely,
$\langle \phi(u), K(\cdot,t) \rangle$.

\section{The Gaussian Kernel}
\label{sec:gk}

Points in $\reals^n$ can be mapped
to an infinite-dimensional RKHS by declaring the inner
product between $x,y \in \reals^n$ to be
\begin{equation}
\label{eq:Gkern}
\langle x, y \rangle = \exp\left\{-\frac{1}{2}\|x-y\|^2\right\}.
\end{equation}
As in \S\ref{sec:eprk}, 
the RKHS is defined via its kernel $K(x,y) = \langle x, y \rangle$,
called a Gaussian kernel. The point $x \in \reals^n$ is
represented by the element $K(\cdot,x)$ in the RKHS,
a Gaussian function centred at $x$.

The geometry described by an inner product is often easier to understand
once the distance between points has been deduced.
Since $\langle K(\cdot,x), K(\cdot,x) \rangle = K(x,x)=1$,
every point $x \in \reals^n$ is mapped to a unit length vector in the RKHS.
The squared distance between $x,y \in \reals^n$ is
\begin{align}
\MoveEqLeft \nonumber
\langle K(\cdot,x) - K(\cdot,y), K(\cdot,x) - K(\cdot,y) \rangle \\
    \nonumber
    &= \langle K(\cdot,x), K(\cdot,x) \rangle - 2 \langle K(\cdot,x),
        K(\cdot, y) \rangle + \langle K(\cdot,y), K(\cdot,y) \rangle \\
    \label{eq:sqdist}
    &= K(x,x) - 2K(x,y) + K(y,y) \\
    &= 2\left( 1 - \exp\left\{-\frac{1}{2}\|x-y\|^2\right\} \right).
\end{align}
As $y$ moves further away from $x$ in $\reals^n$, their
representations in the infinite-dimensional RKHS also move
apart, but once $\|x-y\| > 3$, their representations are close
to being as far apart as possible, namely, the representations
are separated by a distance close to 2.

Although the maximum straight-line distance between two
representations is 2, a straight line in $\reals^n$ does not
get mapped to a straight line in the RKHS. The original
geometry of $\reals^n$, including its linear structure, is
completely replaced by (\ref{eq:Gkern}). The ray $t \mapsto tx$,
$t \geq 0$, is mapped to the curve $t \mapsto K(\cdot,tx)$. The
curve length between $K(\cdot,0)$ and $K(\cdot,tx)$ is
\begin{align*}
\MoveEqLeft
\lim_{N \rightarrow \infty} \sum_{i=0}^{N-1}
    \left\| K\left(\cdot,\frac{i+1}Ntx\right)
    - K\left(\cdot,\frac{i}Ntx\right) \right\| \\
    &= \lim_{N \rightarrow \infty} \sum_{i=0}^{N-1}
        \sqrt{2\left( 1 - \exp\left\{-\frac{1}{2N^2}\|tx\|^2\right\} \right)} \\
    &= \lim_{N \rightarrow \infty} 
    \sqrt{2N^2\left( 1 - \left[1 -\frac{1}{2N^2}\|tx\|^2 + \cdots\right]
        \right)} \\
    &= t\|x\|.
\end{align*}

Calculations similar to the above can be carried out for any given
kernel $K$, at least in theory.


\chapter{Applications to Linear Equations and Optimisation}

Linearly constrained norm-minimisation problems in Hilbert spaces
benefit from the norm coming from an inner product.  The inner
product serves as the derivative of the cost function, allowing the
minimum-norm solution to be expressed as the solution to a linear
equation. This equation, known as the normal equation, can be written
down directly as an orthogonality constraint, as shown on the right
in Figure~\ref{fig:Fapprox}.  Whenever the constraint set forms a
\emph{closed} subspace, a minimum-norm solution is guaranteed to
exist~\cite{Luenberger:1969wv}.

Reproducing kernel Hilbert spaces have additional benefits when the
constraints are of the form $f(x)=c$ because the intersection of a
finite or even infinite number of such pointwise constraints is a
\emph{closed} affine space. The minimum-norm solution is guaranteed
to exist and is expressible using the kernel of the RKHS.

Example~\ref{ex:Interp2} showed mathematically that the kernel
features in minimum-norm problems. A simple geometric explanation
comes from the defining equation $\langle f, K(\cdot,x) \rangle =
f(x)$ implying $K(\cdot,x)$ is orthogonal to every function
$f$ having $f(x)=0$. This is precisely the geometric constraint
for $K(\cdot,x)$ to be a function of smallest norm satisfying
$f(x)=c$. The actual value of $c$ is determined by substituting
$f=K(\cdot,x)$ into the defining equation, yielding the self-referential
$f(x) = K(x,x)$.

\section{Interpolation Problems}
\label{sec:ip}

The interpolation problem asks for the function $f \in V \subset
\reals^X$ of minimum norm satisfying the finite number of constraints
$f(x_i) = c_i$ for $i=1,\cdots,r$. Assuming for the moment there
is at least one function in $V$ satisfying the constraints,
if $V$ is a RKHS with kernel $K$ then a minimum-norm solution exists.

Let $U = \{f \in V \mid f(x_i) = c_i,\ i=1,\cdots,r\}$ and $W = \{
g \in V \mid g(x_i) = 0,\ i=1,\cdots,r\}$.  The subspace $W$ is
closed because $g(x) = 0$ is equivalent to $\langle g, K(\cdot,x)
\rangle = 0$ and the collection of vectors orthogonal to a
particular vector forms a closed subspace.  (Equivalently, the
boundedness of evaluation functionals implies they are continuous
and the inverse image of the closed set $\{0\}$ under a continuous
function is closed.) Being the intersection of a collection of
closed sets, $W$ is closed.  Thus $U$ is closed since it is a
translation of $W$, and a minimum-norm solution $f \in U$ exists
because $U$ is closed (\S\ref{sec:completions}).

A minimum-norm $f \in U$ must be orthogonal to all $g \in W$, for
else $f+\epsilon g \in U$ would have a smaller norm for some $\epsilon
\neq 0$. Since $W$ is the orthogonal complement of $O =
\operatorname{span}\{K(\cdot,x_1), \cdots, K(\cdot,x_r)\}$, and $O$
is closed, $O$ is the orthogonal complement of $W$.  The minimum-norm
solution must lie in $O$.

The minimum-norm $f$ can be found by writing $f = \sum_{j=1}^r
\alpha_j K(\cdot,x_j)$ and solving for the scalars $\alpha_j$ making
$f(x_i) = c_i$. If no such $\alpha_j$ exist then
$U$ must be empty.  Since $f(x_i) = \sum_{j=1}^r \alpha_j K(x_i,x_j)$,
the resulting linear equations can be written in matrix form as
$A\alpha = c$ where $A_{ij} = K(x_i,x_j)$ and the $\alpha$ and $c$
are vectors containing the $\alpha_j$ and $c_i$ in order.  A
sufficient condition for $A$ to be non-singular is for the kernel
to be strictly positive definite (\S\ref{sec:invmat}).
See also~\cite{Larkin:1970do, Yao:1967tc}.

It is remarked that the Representer Theorem in statistical learning
theory is a generalisation of this basic result.

Example~\ref{ex:pws} demonstrates that certain spaces of bandlimited
signals are reproducing kernel Hilbert spaces.  Reconstructing a
bandlimited signal from discrete-time samples can be posed as an
interpolation problem~\cite{Yao:1967tc}.

\section{Solving Linear Equations}
\label{sec:sle}

If the linear equation $Ax=b$ has more than one solution, the
Moore-Penrose pseudo-inverse $A^+$ of $A$ finds the solution $x =
A^+ b$ of minimum norm.  The involvement of norm minimisation
suggests RKHS theory might have a role in solving infinite-dimensional
linear equations.

The minimum-norm solution is that which is orthogonal to the
null-space of $A$. The $r$ rows of $A$ are orthogonal to the null-space,
therefore, the required solution is that which is expressible as a
linear combination of the rows of $A$. This motivates writing $Ax=b$
as $\langle x, a_i \rangle = b_i$, $i=1,\cdots,r$, where $a_i$ is
the $i$th row of $A$, and looking for a solution $x$ lying in $U =
\operatorname{span}\{a_1,\cdots,a_r\}$. 

If $U$ was a RKHS and the $a_i$ of the form $K(\cdot,i)$ then
$\langle x, a_i \rangle = b_i$ would be an interpolation problem
(\S\ref{sec:ip}). This form can always be achieved by embedding $U$
into a RKHS (\S\ref{sec:eprk}) because the embedding $\phi$ sends
$a_i$ to $K(\cdot,i)$ and preserves the inner product: $\langle x,
a_i \rangle = \langle \phi(x), \phi(a_i) \rangle = \langle \phi(x),
K(\cdot, i) \rangle$. The solution to $\langle \phi(x), K(\cdot,
i) \rangle = b_i$ is $\phi(x)=b$ and hence the solution to $\langle
x, a_i \rangle = b_i$ is $x = \phi^{-1}(b)$.

\begin{theorem}
\label{th:lineq}
Let $\Hs$ be a Hilbert space and $\{a_t \mid t \in T\} \subset \Hs$
a collection of vectors in $\Hs$. Denote the closure of the span
of the $a_t$ by $U$. The equations $\langle x, a_t \rangle = b_t$
for $t \in T$ have a solution $x \in U$ if and only if the function
$t \mapsto b_t$ is an element of the RKHS $V \subset \reals^T$ whose
kernel $K \colon T \times T \rightarrow \reals$ is $K(s,t) = \langle
a_t, a_s \rangle$.  If a solution $x$ in $U$ exists then it is
unique and has the smallest norm out of all solutions $x$ in
$\Hs$.  Moreover, $x = \phi^{-1}(b)$ where $b(t) = b_t$ and $\phi\colon
U \rightarrow V$ is the unique isometric isomorphism sending $a_t$
to $K(\cdot,t)$ for $t \in T$.
\end{theorem}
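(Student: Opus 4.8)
The plan is to reduce the whole statement to the isometric isomorphism $\phi\colon U \to V$ already furnished by \S\ref{sec:eprk}, namely the one satisfying $\phi(a_t) = K(\cdot,t)$ and preserving inner products. First I would translate the equations $\langle x, a_t \rangle = b_t$ into a statement about $\phi(x)$: for $x \in U$ and $t \in T$, the isometry of $\phi$ followed by the reproducing property of $K$ (Definition~\ref{def:Kfn}) gives
\[
\langle x, a_t \rangle = \langle \phi(x), \phi(a_t) \rangle = \langle \phi(x), K(\cdot,t) \rangle = \bigl(\phi(x)\bigr)(t),
\]
so that $x \in U$ solves the system if and only if $\phi(x)$ agrees pointwise with the function $b\colon t \mapsto b_t$, i.e. $\phi(x) = b$ in $\reals^T$.

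From here the claimed equivalence is just the bijectivity of $\phi$ as a map $U \to V$: a solution $x \in U$ exists iff $b = \phi(x)$ for some $x \in U$, iff $b \in V$; and when it exists, injectivity of $\phi$ makes it unique, with the explicit form $x = \phi^{-1}(b)$ having emerged automatically from the chain of equalities above.

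The remaining task is the minimum-norm assertion, which concerns all solutions in $\Hs$, not only those in $U$. Since $U$ is closed (it was defined as a closure), $\Hs$ decomposes orthogonally as $U \oplus U^\perp$. Given any solution $\tilde x \in \Hs$, I would write $\tilde x = x' + x''$ with $x' \in U$ and $x'' \in U^\perp$; because every $a_t$ lies in $U$, one has $\langle \tilde x, a_t \rangle = \langle x', a_t \rangle$, so $x'$ is also a solution, and being in $U$ it must coincide with the unique solution $x$ found above. Pythagoras' theorem then yields $\|\tilde x\|^2 = \|x\|^2 + \|x''\|^2 \ge \|x\|^2$, with equality exactly when $x'' = 0$, i.e. $\tilde x = x$; hence $x$ is the unique minimum-norm solution in $\Hs$.

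I do not expect a genuine obstacle here: essentially all the real work was absorbed into the construction of $\phi$ in \S\ref{sec:eprk}. The one step deserving care is the minimum-norm claim, where the key observation is that orthogonally projecting an arbitrary solution onto the closed subspace $U$ again produces a solution — this is precisely what forces the minimum-norm solution to live in $U$ and therefore to equal $\phi^{-1}(b)$.
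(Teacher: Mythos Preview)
Your proposal is correct and follows exactly the approach sketched in the discussion preceding the theorem: translate the equations through the isometry $\phi$ of \S\ref{sec:eprk} into the pointwise condition $\phi(x)=b$, then read off existence, uniqueness, and the formula $x=\phi^{-1}(b)$ from the bijectivity of $\phi$. The paper's own proof simply points back to that discussion (and to \cite[Theorem 42]{Berlinet:2012uv}); you have supplied the minimum-norm argument via orthogonal projection onto the closed subspace $U$ more explicitly than the text does, but this is the standard step and matches what the paper has in mind.
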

\begin{proof}
Follows from the discussion above; see 
also~\cite[Theorem 42]{Berlinet:2012uv}.
\end{proof}

The $\Hs$-norm of the solution $x$ equals the $V$-norm of $b$ because
$\phi$ in Theorem~\ref{th:lineq} preserves the norm.  Sometimes
this knowledge suffices. Otherwise, in principle, $x$ can be found
by expressing $b$ as a linear combination of the $K(\cdot,t)$. If
$b = \sum_k \alpha_k K(\cdot,t_k)$ then $x = \phi^{-1}(b) = \sum_k
\alpha_k a_{t_k}$ by linearity of $\phi$.  More generally, limits
of linear combinations can be used, including integrals and derivatives
of $K(\cdot,t)$; see~\cite[Section III-B]{Kailath:1971hk}
or~\cite[Theorem 43]{Berlinet:2012uv}. For an alternative,
see Theorem~\ref{th:sle}.

\begin{example}
\label{ex:axb}
The equation $Ax=b$ can be written as $\langle x, a_i \rangle =
b_i$ with respect to the Euclidean inner product. Then $K(i,j) =
\langle a_j, a_i \rangle$. In matrix form, $K = A A^\top$.  The map
$\phi$ sending $a_i = A^\top e_i$ to $K(\cdot,i) = A A^\top e_i$ is
$\phi(x) = Ax$. It is more useful though to think of $\phi$ as
sending $A^\top v$ to $AA^\top v$ for arbitrary $v$ because then
$\phi^{-1}$ is seen to send $AA^\top v$ to $A^\top v$.
Expressing $b$ as a linear combination of the
$K(\cdot,i)$ means finding the vector $\alpha$ such that $b = A A^\top
\alpha$.  Assuming $AA^\top$ is invertible, $\alpha = (A A^\top)^{-1} b$.
As $b = (AA^\top) (AA^\top)^{-1} \alpha$ and $\phi^{-1}$ changes the leading
$AA^\top$ to $A^\top$, $x = \phi^{-1}(b) = A^\top (A A^\top)^{-1} b$.
\end{example}

The kernel $K=AA^\top$ in the above example corresponds to
the inner product $\langle b, c \rangle = c^\top (AA^\top)^{-1} b$
by the observation in Example~\ref{ex:Qinv}. The solution
$x = A^\top (AA^\top)^{-1} b$ also involves $(AA^\top)^{-1}$, suggesting
$x$ can be written using the inner product of the RKHS.
This is pursued in \S\ref{sec:frames}.

\begin{example}
\label{ex:integral}
Consider the integral transform
\begin{equation}
\label{eq:fFint}
f(t) = \int_Z F(z) h(z,t)\,dz,\qquad t \in T,
\end{equation}
taking a function $F \in \reals^Z$ and returning a function $f \in
\reals^\top$.  Regularity conditions are necessary for the integral
to exist: assume $F(\cdot)$ and $h(\cdot,t)$ for all
$t \in T$ are square-integrable, as in \cite[Chapter 6]{Saitoh:1988vg}.
Let $\Hs$ be
the Hilbert space $L_2(Z)$ of square-integrable functions on $Z$.
Then (\ref{eq:fFint}) becomes $\langle F, h(\cdot,t) \rangle =
f(t)$ for $t \in T$. Define $K(s,t) = \langle h(\cdot,t), h(\cdot,s) \rangle
= \int_Z h(z,s) h(z,t)\,dz$ and let $V \subset \reals^\top$ be the
associated RKHS. Then (\ref{eq:fFint}) has a solution $F$ if
and only if $f \in V$.
\end{example}

\subsection{A Closed-form Solution}
\label{sec:frames}

The solution $x = A^\top (AA^\top)^{-1} b$ to $Ax=b$ in Example~\ref{ex:axb}
can be written as $x_j = \langle b, c_j \rangle$ where $c_j$ is the
$j$th column of $A$ and the inner product comes from the kernel
$K=AA^\top$, that is, $\langle b, c \rangle = c^\top (AA^\top)^{-1} b$.
Indeed, $\langle b, c_j \rangle = \langle b, A e_j \rangle =
e_j^\top A^\top(AA^\top)^{-1} b = e_j^\top x = x_j$, as claimed.

Similarly, under suitable regularity conditions, a closed-form
solution to (\ref{eq:fFint}) can be found as follows.  Let $V \subset
\reals^X$ be the RKHS with $K(s,t) = \int h(z,t) h(z,s)\,dz$ as
its kernel.  Any $f \in V$ therefore satisfies
\begin{equation}
\label{eq:fFker}
\begin{aligned}
f(t) &= \left\langle f, K(\cdot,t) \right\rangle \\
    &= \left\langle f, \int h(z,t) h(z,\cdot) \,dz \right\rangle \\
    &= \int h(z,t) \left\langle f, h(z,\cdot) \right\rangle\,dz \\
    &= \int h(z,t) F(z) \, dz,
\end{aligned}
\end{equation}
showing the integral equation $f(t) = \int F(z) h(z,t) \, dz$ has
$F(z) = \langle f, h(z,\cdot) \rangle$ as a solution.  This derivation
does not address whether the solution has minimum norm;
sometimes it does~\cite[Chapter 6]{Saitoh:1988vg}.

The following novel derivation builds on the idea in (\ref{eq:fFker}) and
culminates in Theorem~\ref{th:sle}. It relies on the concept of a Parseval
frame~\cite{han2007frames, Paulsen:rkhs}. A collection of vectors $c_s \in V$ is
a Parseval frame for $V \subset \reals^T$ if and only if $b = \sum_s \langle b,
c_s \rangle c_s$ for all $b \in V$. By~\cite[Theorem 3.12]{Paulsen:rkhs}, $c_s$
is a Parseval frame for $V$ if and only if $K = \sum_s c_s c_s^\top$. This
condition extends the second definition in Definition~\ref{def:kernel} because
an orthonormal basis is \emph{a fortiori} a Parseval frame. Note that a
solution to $\langle x, a_t \rangle = b_t$ can be found analogous to
(\ref{eq:fFker}) by substituting $K=\sum_s c_s c_s^\top$ into $b_t = \langle b,
Ke_t \rangle$ in the particular case when $T=\{1,2,\cdots,n\}$. A more general
strategy is sought though.

The difficulty of solving $\phi(x)=b$ in Theorem~\ref{th:lineq} using the kernel
directly is having to express $b$ as a linear combination of the $K(\cdot,t)$.
Expressing $b$ as a linear combination of a Parseval frame $c_s$ though is
trivial: $b = \sum_s \langle b, c_s \rangle c_s$.  For this to be useful, it
must be possible to evaluate $\phi^{-1}(c_s)$. It turns out that the choice $c_s
= \psi(v_s)$ achieves both objectives, where $v_s$ is an orthonormal basis for
$\Hs$ and $\psi\colon \Hs \rightarrow \reals^T$ is the map sending $x$ to $b$
where $b_t = \langle x, a_t \rangle$.

The map $\phi\colon U \rightarrow V$ in Theorem~\ref{th:lineq} is the
restriction of $\psi$ to $U$.  Denote by $P$ the orthogonal projection onto $U$.
If $b = \psi(x)$ for some $x \in \Hs$ then the minimum-norm solution of
$\psi(\hat x) = b$ is $\hat x = P(x)$. Equivalently, $\phi^{-1}(\psi(x)) =
P(x)$. Moreover, $\phi(P(x)) = \psi(x)$.

The inverse image $\phi^{-1}(c_s)$ of $c_s = \psi(v_s)$ can be deduced from
the following calculation, valid for any $x \in \Hs$.
\begin{equation}
\label{eq:pfder}
\begin{aligned}
\phi^{-1}(\psi(x)) &= \sum_s \langle P(x), v_s \rangle v_s \\
    &= \sum_s \langle P(x), P(v_s) \rangle v_s \\
    &= \sum_s \langle \phi(P(x)), \phi(P(v_s)) \rangle v_s \\
    &= \sum_s \langle \psi(x), \psi(v_s) \rangle v_s.
\end{aligned}
\end{equation}
In fact, this shows not only how $\phi^{-1}(c_s)$ can be found, but that the
minimum-norm solution $\hat x$ to $\psi(x)=b$ is given by
\begin{equation}
\hat x = \sum_s \langle b, c_s \rangle v_s,\quad c_s = \psi(v_s).
\end{equation}
Equivalently, $x$ is found ``elementwise'' by $\langle x, v_s \rangle = \langle
b, c_s \rangle$.  Note that $b = \sum_s \langle b, c_s \rangle c_s$, therefore,
$x$ is obtained by using the same linear combination but with $c_s$ replaced by
its preimage $v_s$.  Even though the $v_s$ need not lie in $U$, the linear
combination of them does.

That the $c_s$ form a Parseval frame only enters indirectly to guide the
derivation (\ref{eq:pfder}).  It helps explain why it works. For completeness,
the following verifies the $c_s$ form a Parseval frame.
\begin{align*}
e_t^\top \left( \sum_s c_s c_s^\top \right) e_\tau &=
    \sum_s (e_t^\top c_s) (e_\tau^\top c_s) \\
    &= \sum_s \langle v_s, a_t \rangle \langle v_s, a_\tau \rangle \\
    &= \left\langle \sum_s \langle a_t, v_s \rangle v_s, a_\tau \right\rangle \\
    &= \langle a_t, a_\tau \rangle 
    = \langle a_\tau, a_t \rangle
    = K(t,\tau).
\end{align*}

The above leads to the following elementary but new result.

\begin{theorem}
\label{th:sle}
With notation as in Theorem~\ref{th:lineq},
let $\{v_s \mid s \in S\}$ be an orthonormal basis for $\Hs$.
Define $c_s(t) = \langle v_s, a_t \rangle$. The minimum-norm
solution $x$ in Theorem~\ref{th:lineq} is the unique solution
to $\langle x, v_s \rangle = \langle b, c_s \rangle$.
\end{theorem}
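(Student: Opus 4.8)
The plan is to assemble the observations accumulated in the discussion preceding the statement into a short argument. Recall the two maps in play: $\psi\colon\Hs\to\reals^T$ sending $x$ to the function $t\mapsto\langle x,a_t\rangle$, and its restriction $\phi=\psi|_U\colon U\to V$, which Theorem~\ref{th:lineq} identifies as an isometric isomorphism onto $V$. Writing $P$ for the orthogonal projection of $\Hs$ onto $U$, one has $\phi(P(y))=\psi(y)$ for every $y\in\Hs$, since $a_t\in U$ forces $\langle(I-P)y,a_t\rangle=0$ and hence $\psi(y)=\psi(P(y))=\phi(P(y))$. In particular $c_s\coloneqq\psi(v_s)=\phi(P(v_s))$ genuinely lies in $V$, so the pairing $\langle b,c_s\rangle$ appearing in the statement is the inner product of two elements of $V$.

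Next I would invoke Theorem~\ref{th:lineq}: the standing hypothesis that a minimum-norm solution $x$ exists means $b\in V$, and that solution is $x=\phi^{-1}(b)\in U$. To compute its coordinates against the orthonormal basis $\{v_s\}$, use that $x\in U$ is orthogonal to $(I-P)(v_s)$ to get $\langle x,v_s\rangle=\langle x,P(v_s)\rangle$, and then apply the fact that $\phi$ preserves inner products on $U$:
\[
\langle x,v_s\rangle=\langle x,P(v_s)\rangle=\langle\phi(x),\phi(P(v_s))\rangle=\langle b,c_s\rangle .
\]
Thus the minimum-norm solution satisfies the asserted system; expanding $x$ in the orthonormal basis then recovers the closed form $x=\sum_s\langle b,c_s\rangle v_s$ implicit in~(\ref{eq:pfder}).

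Finally, for uniqueness I would use only that $\{v_s\}$ is a complete orthonormal set in $\Hs$: if $x'\in\Hs$ also satisfies $\langle x',v_s\rangle=\langle b,c_s\rangle$ for all $s\in S$, then $\langle x-x',v_s\rangle=0$ for every $s$, whence $x-x'=0$. Hence $x$ is the unique solution to $\langle x,v_s\rangle=\langle b,c_s\rangle$, completing the proof.

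There is no real obstacle here; the substance was already carried out in deriving~(\ref{eq:pfder}). The only points needing care are bookkeeping: keeping the $\Hs$-inner product and the $V$-inner product distinct, justifying $\phi(P(y))=\psi(y)$ (so that $c_s\in V$ and the step $\langle x,P(v_s)\rangle=\langle\phi(x),\phi(P(v_s))\rangle$ is legitimate), and noting that it is \emph{completeness} of the orthonormal system $\{v_s\}$, rather than merely the Parseval-frame property of the $c_s$, that delivers uniqueness.
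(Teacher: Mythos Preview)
Your proof is correct and follows essentially the same route as the paper's: both establish $c_s=\phi(P(v_s))$, then run the chain $\langle x,v_s\rangle=\langle x,P(v_s)\rangle=\langle\phi(x),\phi(P(v_s))\rangle=\langle b,c_s\rangle$, and obtain uniqueness from completeness of the orthonormal basis. Your version is slightly more explicit in justifying $c_s\in V$ via the identity $\phi\circ P=\psi$, which the paper derives more tersely from $c_s(t)=\langle P(v_s),a_t\rangle$.
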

\begin{proof}
As $v_s$ is an orthonormal basis, the solution $x$ to $\langle x,
v_s \rangle = \langle b, c_s \rangle$ is unique. It therefore
suffices to show $\langle x, v_s \rangle = \langle \phi(x), c_s
\rangle$ for $x \in U$. This follows from $\langle x, v_s \rangle
= \langle P(x), v_s \rangle = \langle x, P(v_s) \rangle = \langle
\phi(x), \phi(P(v_s)) \rangle = \langle \phi(x), c_s \rangle$ where
$P$ is projection onto $U$. That $c_s = \phi(P(v_s))$ can be derived
from $c_s(t) = \langle v_s, P(a_t) \rangle = \langle P(v_s), a_t
\rangle$.
\end{proof}

Theorem~\ref{th:sle} can be used to solve numerically integral
equations via series expansions (Example~\ref{ex:integral}). If $x
= \sum_s \alpha_s v_s$ then each $\alpha_s$ is found by evaluating
numerically the RKHS inner product $\langle b, c_s \rangle$. Here,
$c_s$ can be found by evaluating an integral numerically.


\chapter{Applications to Stochastic Processes}
\label{sec:asp}

Parzen is credited with introducing RKHS theory into
statistics~\cite{Parzen:1959uu, Parzen:1962ug, parzen1963probability,
Parzen:1970wz}.  He learnt about RKHSs by necessity, as a
graduate student, back when the theory was in its infancy, because
it was his prescribed minor thesis topic~\cite{Newton:2002uz}.
A decade later, Kailath demonstrated to the signal processing
community the relevance of RKHS theory in a series of lucid papers
concerning detection, filtering and parameter
estimation~\cite{Kailath:1971hk, Kailath:1972jo, Duttweiler:1973bq,
Duttweiler:1973io, Kailath:1975bj}.  Kailath and Parzen were
colleagues at Stanford~\cite{Newton:2002uz}.

Despite Hilbert spaces being linear spaces, RKHS theory is not
confined to studying \emph{linear} estimation theory; a clever use
of characteristic functions leads to a RKHS theory for optimal
nonlinear estimators~\cite{Hida:1967ux, Kallianpur:1970wu,
Duttweiler:1973bq, Stulajter:1978wt}. Robust estimation theory has
also been considered~\cite{Barton:1990ir}.

A RKHS approach usually entails finding an expression for the kernel.
If this cannot be done, Kailath observes it may
suffice to use a numerical scheme for computing the norms of
functions on a RKHS~\cite{Kailath:1972bv}.

The ideas underlying the application of RKHS theory to
stochastic processes are presented without regard to
technical considerations (such as measurability).

\section{Detection in Finite Dimensions}
\label{sec:difd}

Let $m \in \reals^n$ be a known vector called the message. Let $w \in
\reals^n$ be a realisation of Gaussian white noise, that is, $w \sim N(0,I)$.
The detection problem is deciding whether the received signal $y \in \reals^n$
represents only noise, $y = w$, or the message plus noise, $y = m+w$.

The standard mechanism for making such a decision is the likelihood ratio
test. The likelihood of $y$ being just noise is proportional to
$\exp\{-\frac12\|y\|^2\}$ while the likelihood of $y$ being the message plus
noise is proportional to $\exp\{-\frac12\|y-m\|^2\}$, where the constant of
proportionality is the same in both cases. The message is therefore deemed to
be present if and only if $\|y-m\| < \|y\|$.

The test $\|y-m\| \lessgtr \|y\|$ has a simple geometric explanation. Interpret
the case $y=w$ as sending the zero message: $y=0+w$. In $\reals^n$, label the
point $m$ as the message, label the origin as the zero message and label the
point $y$ as the received signal. Since the distribution of $w$ is radially
symmetric, the likelihood of $y = s+w$ is proportional to $\|y-s\|$ and decays
monotonically. Deciding between $s=0$ and $s=m$ comes down to deciding whether
$y$ is closer to $0$ or $m$.

The geometry suggests $\langle y,m \rangle - \frac12\langle m, m\rangle
\lessgtr 0$ as an equivalent formulation of the likelihood ratio test,
corresponding to projecting the signal $y$ onto the ``message space'' $m$.

\newcommand{\Sh}{\Sigma^{\frac12}}
\newcommand{\Shi}{\Sigma^{-\frac12}}

Generalising to coloured noise $N(0,\Sigma)$ is achieved by replacing $w$ by
$\Sh w$. Testing between $y = \Sh w$ and $y = m + \Sh w$ is achieved by
whitening the received signal, thus deciding between $\Shi y = w$ and $\Shi y =
\Shi m + w$. Specifically, the test is
\begin{equation}
\label{eq:Shi}
\langle \Shi y, \Shi m\rangle - \frac12\langle \Shi m, \Shi m\rangle \lessgtr 0,
\end{equation}
assuming of course $\Sigma$ is non-singular.

Intriguingly, (\ref{eq:Shi}) can be written as
\begin{equation}
\label{eq:ShiK}
\langle y, m \rangle_K -\frac12 \langle m, m \rangle_K \lessgtr 0,
\end{equation}
where $\langle \cdot, \cdot \rangle_K$ is the inner product on the RKHS whose
kernel is $K=\Sigma$. Indeed, from Example~\ref{ex:Qinv}, the RKHS whose kernel
is $K$ is $\reals^n$ equipped with the inner product $\langle x,y \rangle_K =
y^T \Sigma^{-1} x$.

\newcommand{\range}{\operatorname{range}}

The advantage of (\ref{eq:ShiK}) is it can be shown to remain valid when
$\Sigma$ is singular. Importantly, note that if $m$ does not lie in $\range(\Sh)
= \range(\Sigma)$ then the detection problem can be solved deterministically.
Otherwise, if $m$ is in $\range(\Sigma)$ then $y$ will also lie in
$\range(\Sigma)$, hence (\ref{eq:ShiK}) is meaningful because both $m$ and
$y$ will be elements of the RKHS.

The RKHS structure gives a useful link between the geometry $\langle \cdot,
\cdot \rangle_K$ and the statistics of the noise $\Sh w$. This manifests itself
in several ways, one of which is that the distribution of $\langle \Sh w, z
\rangle_K$ is zero-mean Gaussian with variance $\langle z, z \rangle_K$, and
moreover, the correlation between any two such Gaussian random variables is
given by
\begin{equation}
E[\, \langle \Sh w, x \rangle_K \, \langle \Sh w, z \rangle_K \,] =
  \langle x, z \rangle_K.
\end{equation}
In particular, with respect to $\langle \cdot, \cdot \rangle_K$, the
distribution of $\Sh w$ is spherically symmetric, hence the ratio test reduces
to determining whether $y$ is closer to $0$ or $m$ with respect to the RKHS norm
$\|\cdot\|_K$.

\section{The RKHS Associated with a Stochastic Process}
\label{sec:rsp}

A stochastic process $X$ is a collection of random variables $X(t)$
indexed by a parameter $t \in T$ often taken to be time. An archetypal
process is the Wiener process, described from first principles
in~\cite{Manton:2013kk}, among other introductory material.  A
sample path is a realisation $t \mapsto X(t)$ of $X$, also known
as a randomly chosen signal or waveform.

Despite RKHSs being function spaces and sample paths being functions,
a RKHS structure is not given to the space of all possible sample
paths.  Since linear filters form linear combinations of the $X(t)$,
the Hilbert space approach~\cite{Small:2011wg} to filtering endows the
space spanned by the individual $X(t)$ with the inner product
$\langle X(s), X(t) \rangle = E[X(s) X(t)]$.  This inner product
equates the statistical concept of conditional expectation with the
simpler geometric concept of projection. For example, uncorrelated
random variables are orthogonal to each other.

The RKHS approach goes one step further. The space spanned by the
$X(t)$ has a useful geometry but no convenient coordinate system.
A point in that space is nothing more than a point representing a
random variable, whereas a point in $\reals^n$ comes with $n$
coordinates describing the point's precise location.  In hindsight,
the RKHS approach can be understood as giving a convenient coordinate
system to the space of random variables.  (While other authors have
considered a RKHS approach to be coordinate free, we argue $\langle
f, K(\cdot,x)\rangle$ is precisely the $x$th coordinate of $f$.
RKHS theory is a blend of coordinate-free geometric reasoning with
algebraic manipulations in pointwise coordinates.)

The coordinate system introduced by the RKHS approach assigns to
each random variable $U$ the function $t \mapsto E[UX(t)]$. The
$t$th coordinate is $E[UX(t)]$. This provides explicitly a wealth
of information about $U$ in a form compatible with the geometry of
the space.

This coordinate system comes from taking the covariance function $R(t,s) =
E[X(t)X(s)]$, which is always positive semi-definite, to be the kernel of a RKHS
$V \subset \reals^T$. Recall from \S\ref{sec:GbD} that this construction
arranges the individual points $X(t)$ in $V$ according to the geometry $\langle
X(s), X(t) \rangle = E[X(s) X(t)]$. Precisely, the element $R(\cdot,s) \in V$
represents the random variable $X(s)$.  The space $V$ is the completion of the
space spanned by the $R(\cdot,s)$ for $s \in T$.  Any (mean-square) limit $U$ of
finite linear combinations of the $X(t)$ can therefore be represented in $V$ by
a limit $E[UX(t)]$ of corresponding finite linear combinations of the
$R(\cdot,s)$. This map $U \mapsto E[UX(t)]$ corresponds to the isometric
isometry $\phi$ in \S\ref{sec:eprk}. In other words, associated with the random
variable $U$ are the coordinates $t \mapsto E[UX(t)]$ describing the location of
the embedding of $U$ in the  RKHS $V$.

While the elements of $V$ and sample paths are both functions on
$T$, the former represents the coordinates $t \mapsto E[U X(t)]$
of a random variable $U$ while the latter represents a realisation
of $t \mapsto X(t)$. Especially since the former is deterministic
and the latter stochastic, there is not necessarily any relationship
between the two.  Surprisingly then, several relationships have
been found for Gaussian processes. In typical cases, the sample paths are
elements of the RKHS with probability zero or one, depending on the process
itself~\cite{Driscoll:1973un, Lukic:2001un}.  A
detection problem is non-singular if and only if the signal belongs
to $V$~\cite{Kailath:1971hk}.  (This is a manifestation of the
Cameron-Martin Theorem, with the RKHS $V$ being known as the
Cameron-Martin space~\cite{da2006introduction}.)

\section{Signal Detection}
\label{sec:sigdet}

Salient features of the RKHS approach to signal detection are
summarised below, the purpose being to add substance to the higher
level description above. Greater detail can be found
in~\cite{Kailath:1971hk}.

The signal detection problem is a hypothesis test for deciding
between the null hypothesis that the signal $X(t)$ is composed of
just noise --- $X(t) = N(t)$ --- versus the hypothesis that a known
signal $m(t)$ is present --- $X(t) = m(t) + N(t)$.  For concreteness,
$t$ is assumed to belong to the interval $T = [0,1]$. The noise process
$N(t)$ is assumed to be a zero-mean second-order Gaussian process.
Its covariance function $R(t,s) = E[N(s)N(t)]$ is assumed to be
continuous (and hence bounded) on $T \times T$. (This is equivalent
to $N(t)$ being mean-square continuous.)
Example calculations will take $N(t)$ to be
the standard Wiener process~\cite{Manton:2013kk} with $N(0)=0$
and covariance function $R(t,s) = \min\{s,t\}$.

Hypothesis testing generally reduces to comparing
the likelihood ratio against a threshold~\cite{poor1994introduction}.
Two methods for determining the likelihood ratio are discussed below.

\subsection{The Karhunen-Lo\'eve Approach}
\label{sec:KL}

The continuous-time detection problem has a different flavour from
the conceptually simpler discrete-time detection problem when $T$
is a finite set. If $m(t)$ varies faster than a typical sample path
of $N(t)$, and especially if $m(t)$ contains a jump, correct detection
is possible with probability one~\cite{poor1994introduction}.
Otherwise, if exact detection
is not possible, the continuous-time detection problem can be solved
as a limit of discrete-time detection problems; being continuous,
a sample path of $N(t)$ is fully defined once its values at rational
times $t$ are known.

A more practical representation of $N(t)$ using a countable
number of random variables is the Karhunen-Lo\'eve expansion.
Provided $R(t,s)$ is continuous on the unit square $T \times T$,
it has an eigendecomposition
\begin{equation}
\label{eq:Reig}
R(t,s) = \sum_{k=1}^\infty \lambda_k \psi_k(t) \psi_k(s)
\end{equation}
where the $\lambda_k \geq 0$ are the eigenvalues and the $\psi_k$
are the orthonormal eigenfunctions defined by
\begin{equation}
\label{eq:Rlp}
\int_0^1 R(t,s) \psi(s)\,ds = \lambda \psi(t).
\end{equation}
Orthonormality imposes the extra constraint
\[
\int_0^1 \psi_n(t) \psi_m(t)\,dt = \begin{cases}
    1 & m = n; \\
    0 & \textrm{otherwise}.
    \end{cases}
\]
This is a generalisation of the eigendecomposition
of a positive semi-definite matrix and is known
as Mercer's theorem~\cite{Mercer:1909gf}.

If $N(t)$ is the standard Wiener process then
$R(t,s) = \min\{s,t\}$ and
\begin{align*}
\psi_k(t) &= \sqrt2 \sin\left( \left(k-\tfrac12\right) \pi t \right), \\
\lambda_k &= \frac1{\left(k-\tfrac12\right)^2 \pi^2},
\end{align*}
as verified in part by
\begin{align*}
\MoveEqLeft
\int_0^1 \min\{s,t\} \sqrt2 \sin\left( \left(k-\tfrac12\right) \pi s \right)
        \, ds  \\
    &= \sqrt2\left[
\int_0^t s \sin\left( \left(k-\tfrac12\right) \pi s \right) \, ds 
+ t \int_t^1  \sin\left( \left(k-\tfrac12\right) \pi s \right) \, ds \right] \\
    &= \lambda_k \sqrt2 \sin\left( \left(k-\tfrac12\right) \pi t \right).
\end{align*}
Note the $\lambda_k$ decay to zero.
Define
\[
N_k = \int_0^1 N(t) \psi_k(t)\,dt.
\]
The orthonormality of the $\psi_k$ and $N(t)$ having zero mean
imply $E[N_k] = 0$, $E[N_k^2] = \lambda_k$ and $E[N_nN_m]=0$ for
$n \neq m$. Indeed,
\begin{align*}
E[N_n N_m] &= \int_0^1 \int_0^1 R(t,s) \psi_n(s) \psi_m(t)\,ds\,dt \\
&= \int_0^1 \lambda_n \psi_n(t) \psi_m(t)\,dt \\
&= \begin{cases}
    \lambda_n & m = n; \\
    0 & \textrm{otherwise}.
    \end{cases}
\end{align*}
Since $N(t)$ is a Gaussian process, the $N_k$ are actually independent
Gaussian random variables with zero mean and variance $\lambda_k$.

The Karhunen-Lo\'eve expansion of $N(t)$ is
\begin{equation}
\label{eq:KLe}
N(t) = \sum_{k=1}^\infty N_k \psi_k(t)
\end{equation}
where it is simplest to interpret the right-hand side as a way
of generating a Wiener process: generate $N_k \sim N(0,\lambda_k)$
at random then form the summation. See~\cite{wong2011stochastic} for a proof.

Assume the signal $m(t)$ looks sufficiently like the noise that it
too can be represented as $m(t) = \sum_{k=1}^\infty m_k \psi_k(t)$.
Substituting this expansion of $m(t)$ into $\int_0^1 m(t) \psi_n(t)\,dt$,
and recalling the orthonormality of the $\psi_k$, shows $m_k =
\int_0^1 m(t) \psi_k(t)\,dt$.

Transforming the received signal $X(t)$ in the same way leads to
a corresponding hypothesis test for distinguishing between $X_k =
N_k$ and $X_k = m_k + N_k$, where $X_k = \int_0^1 X(t) \psi_k(t)\,dt$.
Since the $\lambda_k$ decrease to zero, the likelihood ratio can
be approximated by using only a finite number of the $X_k$.
For details, see~\cite{poor1994introduction}.

The use of (Lebesgue measure) $ds$ in (\ref{eq:Rlp}) is \emph{ad
hoc}~\cite{Kailath:1971hk}. Changing it will result in a different discrete-time
approximation of the detection problem.  An equivalent hypothesis test will
still result, nevertheless, avoiding an \emph{ad hoc} choice is preferable.
Moreover, computing an eigendecomposition (\ref{eq:Rlp}) of $R(t,s)$ can be
difficult~\cite{Kailath:1971hk}. The RKHS approach avoids the need for computing
an eigendecomposition of $R(t,s)$.

\subsection{The RKHS Approach}

The logarithm of the likelihood ratio of $X(t) = m(t)+N(t)$ to $X(t)
= N(t)$ when $N(t)$ is a Gaussian process takes the form
\begin{equation}
\label{eq:h}
\int_0^1
X(t)\, dH(t) - \frac12 \int_0^1 m(t)\, dH(t)
\end{equation}
if there exists an $H(t)$ (of bounded variation) satisfying
\begin{equation}
\label{eq:Hreq}
\int_0^1 R(t,s) dH(s) = m(t).
\end{equation}
This is a consequence of a martingale representation theorem;
see~\cite[Theorem 2.3]{Pitcher:1960ub}, \cite[Proposition
VI.C.2]{poor1994introduction} and~\cite[Section 7.2]{wong2011stochastic}
for details. In other words, the linear operator $X \mapsto \int_0^1
X(t)\,dH(t)$ reduces the signal detection problem to the one-dimensional problem
of testing whether a realisation of $\int_0^1 X(t)\,dH(t)$ came
from $\int_0^1 N(t)\,dH(t)$ or $\int_0^1 m(t)+N(t)\,dH(t)$.

Directly obtaining a closed-form solution to (\ref{eq:Hreq}) can be difficult.
The RKHS approach replaces this problem by the sometimes simpler problem of
computing the inner product of a particular RKHS; as discussed later, the RKHS
approach leads to the mnemonic
\begin{equation}
\label{eq:LRmnemonic}
\langle X, m \rangle - \frac12 \langle m, m \rangle
\end{equation}
for the likelihood ratio, where the inner product is that of the RKHS whose
kernel is the covariance $R(t,s)$. (See Example~\ref{ex:Nwp}.)

Let $U = \int_0^1 N(t)\,dH(t)$. Being the limit of certain finite
linear combinations of the $N(t)$, it belongs to the Hilbert space
(of square-integrable random variables)
generated by the $N(t)$. Since each $N(t)$ is zero-mean Gaussian,
so is $U$. Its distribution is thus fully determined by its
``coordinates'' $t \mapsto E[UN(t)]$. Since
\begin{align*}
E[UN(t)] &= \int_0^1 E[N(s) N(t)]\,dH(s) \\
    &= \int_0^1 R(t,s)\,dH(s),
\end{align*}
if the requirement (\ref{eq:Hreq}) holds then $U = \int_0^1 N(t)\,dH(t)$
satisfies
\begin{equation}
\label{eq:Emt}
E[U N(t)] = m(t).
\end{equation}

The existence of an $H(t)$ satisfying (\ref{eq:Hreq}) is only a sufficient
condition for the signal detection problem to be non-singular. The existence of
a random variable $U$, belonging to the Hilbert space generated by the $N(t)$
and satisfying (\ref{eq:Emt}), is both necessary and
sufficient~\cite{Kailath:1971hk}. The discrepancy arises because a $U$
satisfying (\ref{eq:Emt}) need not be expressible in the form $U = \int_0^1
N(t)\,dH(t)$; see~\cite[p.\@ 44]{Pitcher:1960ub} for an example.

Let $V \subset \reals^T$ be the RKHS whose kernel is the noise
covariance function $R(t,s) = E[N(s) N(t)]$. Recall that the Hilbert
space generated by the $N(t)$ is isometrically isomorphic to $V$,
with $U \mapsto E[UN(\cdot)]$ an isometric isomorphism.  Therefore,
(\ref{eq:Emt}) has a solution if and only if $m$ is an element of
the RKHS $V$.

Condition (\ref{eq:Hreq}) requires expressing $m(t)$ as a linear combination of
the $R(\cdot,s)$, but with the caveat that the linear combination take the form
of a particular integral. The requirement that $m$ be an element of $V$ is that
$m$ is the limit of finite linear combinations of the $R(\cdot,s)$ where the
limit is with respect to the norm on $V$, defined implicitly by $\langle
R(\cdot,s), R(\cdot,t) \rangle = R(t,s)$.

If $U$ satisfies (\ref{eq:Emt}) then the likelihood ratio can be found as
follows. Write $U$ as $U(N)$ to emphasise that $U$ can be considered to be an
almost surely linear function of the $N(t)$. The likelihood ratio is $U(X) -
\frac12 U(m)$. This agrees with (\ref{eq:h}) if $U(N) = \int_0^1 N(t)\,dH(t)$.

\begin{example}
\label{ex:det}
With notation as above, assume $m(t) = \sum_i \alpha_i R(\cdot,s_i)$.
A solution to (\ref{eq:Hreq}) is obtained by choosing $H(s)$ to have increments
at the $s_i$ of height $\alpha_i$. This leads to $\int_0^1 X(t)\, dH(t) = \sum_i
\alpha_i\, X(s_i)$ and $\int_0^1 m(t)\,dH(t) = \sum_i \alpha_i\, m(s_i)$.
Alternatively, let $\phi$ be the linear isomorphism $U \mapsto E[UN(\cdot)]$, so
that (\ref{eq:Emt}) becomes $\phi(U) = m$. Then $U(N) = \phi^{-1}(m) = \sum_i
\alpha_i \phi^{-1}(R(\cdot,s_i)) = \sum_i \alpha_i N(s_i)$. Thus $U(X) = \sum_i
\alpha_i X(s_i)$ and $U(m) = \sum_i \alpha_i m(s_i)$, resulting in the same
likelihood ratio.
\end{example}

A more explicit expression can be given for the second term of the
likelihood ratio (\ref{eq:h}) by using the inner product on the RKHS $V$. Let
$U(f) = \int_0^1 f(t)\,dH(t)$ where $H(t)$ satisfies (\ref{eq:Hreq}), that is,
$U(R(t,\cdot))=m(t)$. Assume
$U(f)$ for $f \in V$ can be expressed as $\langle f, g \rangle$ where $g \in V$ is to be determined.
The requirement (\ref{eq:Hreq}) becomes $\langle
R(t,\cdot), g \rangle = m(t)$, whose unique solution is $g=m$. In particular,
the second term of the likelihood ratio is proportional to
\begin{equation}
\label{eq:normm}
\int_0^1 m(t)\,dH(t) = \langle m, m \rangle = \| m \|^2.
\end{equation}
This suggests, and it turns out to be true, that $U(m) = \langle m, m \rangle$
in general, justifying the second part of (\ref{eq:LRmnemonic}). (Even if $U$
cannot be written in the form $\int \cdot\,dH$, it can be approximated
arbitrarily accurately by such an expression~\cite{Pitcher:1960ub}.)

Equation (\ref{eq:normm}) can be verified to hold in Example~\ref{ex:det}.
The square of the RKHS norm of $m(t)$ is
\begin{align*}
\|m\|^2 &= \left\langle \sum_i \alpha_i R(\cdot, s_i), \sum_j \alpha_j R(\cdot, s_j) \right\rangle \\
    &= \sum_{i,j} \alpha_i \alpha_j R(s_j,s_i).
\end{align*}
The integral can be written as
\begin{align*}
\int_0^1 m(t)\,dH(t) &= \sum_j \alpha_j m(s_j) \\
    &= \sum_j \alpha_j \sum_i \alpha_i R(s_j,s_i).
\end{align*}
These two expressions are the same, as claimed in (\ref{eq:normm}).

The first term of (\ref{eq:h}) is more subtle. It naively equals $\langle X, m
\rangle$, leading to (\ref{eq:LRmnemonic}). The difficulty is that a realisation
of $X(t)$ is almost surely not an element of the RKHS $V$, making $\langle X, m
\rangle$ technically meaningless. As advocated in~\cite{Kailath:1971hk} though, the
mnemonic $\langle X, m \rangle$ is useful for guessing the solution then
verifying its correctness by showing it satisfies (\ref{eq:Emt}). Moreover, it
is intuitive to interpret the detection problem as $X \mapsto \langle X, m
\rangle$ since this projects the received signal $X$ onto the signal $m$ to be
detected. The RKHS inner product gives the optimal projection.

\begin{example}
\label{ex:Nwp}
Assume $N(t)$ is the standard Wiener process with $N(0)=0$ and
$R(t,s) = \min\{s,t\}$. The associated RKHS $V$ is given in
Example~\ref{ex:wp}. The detection problem is non-singular
if and only if $m \in V$. Assuming $m \in V$, the second
term of the likelihood ratio (\ref{eq:h}) is $\frac12\|m\|^2 = \frac12
\int_0^1 (m'(s))^2\,ds$. The mnemonic $\langle X, m \rangle$
suggests the first term of (\ref{eq:h}) is $\int_0^1 X'(s) m'(s)\,ds$. This is
technically incorrect because a Wiener process is nowhere differentiable, that is, $X'(t)$ is
not defined. However, assuming $m$ is sufficiently smooth and applying
integration by parts leads to the alternative expression $m'(1)X(1) - \int_0^1 
X(s) m''(s)\,ds$. If $X$ lies in $V$ then $\int_0^1 X'(s) m'(s)\,ds$ and
$m'(1)X(1) - \int_0^1 X(s) m''(s)\,ds$ are equal. The advantage of the latter is
it makes sense when $X$ is a realisation of a Wiener process. To verify this is
correct, let $U = m'(1) N(1) - \int_0^1 N(s) m''(s)\,ds$.  Then
\begin{align*}
E[UN(t)] &= m'(1) R(t,1) - \int_0^1 R(t,s) m''(s)\,ds \\
    &= m'(1) t - \int_0^t s\, m''(s)\,ds - t \int_t^1 m''(s)\,ds \\
    &= m'(1) t - (t m'(t) - m(t)) - t(m'(1) - m'(t)) \\
    &= m(t),
\end{align*}
verifying the correct linear operator has been found.
Here, integration by parts has been used, as has the
fact that $m(0) = 0$ because $m$ is assumed to be in $V$.
(If $m$ is not sufficiently smooth to justify integration by
parts, it is possible to write $U$ in terms of a stochastic
integral.)
\end{example}

It remains to demystify how the technically nonsensical $\langle X, m \rangle$
can lead to the correct linear operator being found. At the heart of the matter
is the concept of an abstract Wiener space. For the Wiener process in
Example~\ref{ex:Nwp} the setting is the following. Let $\mathcal{B}$ be the
Banach space of continuous functions $f \colon [0,1] \rightarrow \reals$
satisfying $f(0)=0$, equipped with the sup norm $\|f\| = \sup_{t \in [0,1]} |
f(t) |$. Let $\mathcal{H}$ be the (reproducing kernel) Hilbert space $V$ from
Example~\ref{ex:Nwp}. Treated as sets, observe that $\mathcal{H} \subset
\mathcal{B}$. The inclusion map $\iota\colon \mathcal{H} \rightarrow
\mathcal{B}$ is continuous because the norm on $\mathcal{B}$ is weaker than the
norm on $\mathcal{H}$. Furthermore, $\iota(\mathcal{H})$ is dense in
$\mathcal{B}$. The dual map $\iota^* \colon \mathcal{B}^* \rightarrow
\mathcal{H}^*$ is injective and dense; here, $\mathcal{B}^*$ is the
space of  all bounded linear functionals on $\mathcal{B}$, and $\iota^*$ takes a
functional $\mathcal{B} \mapsto \reals$ and returns a functional $\mathcal{H}
\mapsto \reals$ by composing the original functional with $\iota$. Put simply, a
linear functional on $\mathcal{B}$ is \emph{a fortiori} a linear functional on
$\mathcal{H}$ because $\mathcal{H}$ is a subset of $\mathcal{B}$. (The functions
$\iota$ and $\iota^*$ are used to account for the different topologies on the
two spaces.)

Every realisation of the noise process $N$ lies in $\mathcal{B}$. If $g \in
\mathcal{B}^*$ is a linear functional then, as in the finite dimensional case
(\S\ref{sec:difd}), $g(N)$ is a zero-mean Gaussian random variable. For
technical reasons, the geometry of $\mathcal{B}$ cannot correspond to the
statistics of $N$. Instead, it is the geometry of $\mathcal{H}$ that corresponds
to the statistics of $N$. The variance of $g(N)$ is $\| \iota^*(g)
\|_{\mathcal{H}}$. In particular, the Gaussian distribution of $N$ in
$\mathcal{B}$ is radially symmetric with respect to the norm $g \mapsto
\|\iota^*(g)\|_{\mathcal{H}}$. Believably then, the detection problem can
be solved by projecting $X$ onto the one-dimensional space
spanned by the message $m$, as in \S\ref{sec:difd}.

If there exists a $g \in \mathcal{B}^*$ such that $\iota^*(g)$ is the linear
operator $\cdot \mapsto \langle \cdot, m \rangle$ then $g(X)$ is the precise
definition of the mnemonic $\langle X,m \rangle$ in (\refeq{eq:LRmnemonic}).
This corresponds to the case when $m$ was assumed to be sufficiently smooth in
Example~\ref{ex:Nwp}. Otherwise, the extension of $\cdot \mapsto \langle \cdot,
m \rangle$ from $\mathcal{H}$ to $\mathcal{B}$ is not a \emph{bounded} linear
functional. It turns out though that there exists a subspace $\mathcal{B}'$ of
$\mathcal{B}$ such that $\langle \cdot, m \rangle$ extends to $\mathcal{B}'$ and
the probability of $N$ lying in $\mathcal{B}'$ is unity. In particular, $\langle
X, m\rangle$ can be defined to be the extension of $\langle \cdot, m\rangle$ by
continuity: if $X_k \rightarrow X$ in $\mathcal{B}$, where the $X_k$ belong to
$\mathcal{H}$, then $\langle X, m \rangle$ is defined to be $\lim_k \langle X_k,
m \rangle$. This limit exists with probability one. (Mathematically,
$\mathcal{B}^*$ can be treated as a subset of $L^2(\mathcal{B};\reals)$, where
the $L^2$-norm of $g\colon \mathcal{B} \rightarrow \reals$ is the square-root of
$E[g(N)^2]$. Then $(\iota^*)^{-1}\colon \iota^*(\mathcal{B}^*) \subset
\mathcal{H}^* \rightarrow \mathcal{B}^* \subset L^2(\mathcal{B};\reals)$ extends
by continuity to a linear isometry from $\mathcal{H}^*$ to
$L^2(\mathcal{B};\reals)$. The image of $\langle \cdot, m\rangle$ under this
extension is the true definition of $X \mapsto \langle X, m \rangle$.)

\paragraph*{Remark} Here is another perspective.
By~\cite[Lemma 1.2]{Pitcher:1960ub}, the process $N(t)$ lies
in $\mathcal{H} = L^2([0,1])$ with probability one: $\int_0^1 N^2(t)\,dt <
\infty$ almost surely. Furthermore, the same lemma proves that if the detection
problem is non-singular then $m(t)$ lies in $\mathcal{H}$. Therefore, without
loss of generality, the sample space can be taken to be $\mathcal{H}$. As
in~\cite{da2006introduction}, a Gaussian measure $N_{\mu,Q}$ can be placed on
the separable Hilbert space $\mathcal{H}$, where $\mu \in \mathcal{H}$ and
$Q\colon \mathcal{H} \rightarrow \mathcal{H}$ represent the mean and covariance
of the Gaussian measure. The signal detection problem is deciding between the
probability measures $N_{0,Q}$ and $N_{m,Q}$. The Cameron-Martin
formula~\cite[Theorem 2.8]{da2006introduction} yields the likelihood
ratio. Furthermore, the image of $\mathcal{H}$ under the square-root of $Q$,
denoted $Q^{\frac12}(\mathcal{H})$, is precisely the RKHS $V$ discussed earlier,
and goes by the name Cameron-Martin space.


\newcommand{\va}{\mbox{\boldmath$a$}}
\newcommand{\vh}{\mbox{\boldmath$h$}}
\newcommand{\vk}{\mbox{\boldmath$k$}}
\newcommand{\vv}{\mbox{\boldmath$v$}}
\newcommand{\vy}{\mbox{\boldmath$y$}}
\newcommand{\vx}{\mbox{\boldmath$x$}}

\newcommand{\vA}{\mbox{\boldmath$A$}}
\newcommand{\vC}{\mbox{\boldmath$C$}}
\newcommand{\vI}{\mbox{\boldmath$I$}}
\newcommand{\vK}{\mbox{\boldmath$K$}}
\newcommand{\vN}{\mbox{\boldmath$N$}}
\newcommand{\vP}{\mbox{\boldmath$P$}}
\newcommand{\vX}{\mbox{\boldmath$X$}}
\newcommand{\vH}{\mbox{\boldmath$H$}}

\newcommand{\vY}{\mbox{\boldmath$Y$}}

\newcommand{\vones}{{\bf 1}}

\newcommand{\vzeta}{\mbox{\boldmath$\zeta$}}

\newcommand{\vmu}{\mbox{\boldmath$\mu$}}
\newcommand{\valpha}{\mbox{\boldmath$\alpha$}}
\newcommand{\vbeta}{\mbox{\boldmath$\beta$}}
\newcommand{\vpi}{\mbox{\boldmath$\pi$}}

\newcommand{\R}{\mathbb{R}}
\newcommand{\Z}{\mathbb{Z}}
\newcommand{\N}{\mathbb{N}}

\newcommand{\cov}{\mbox{Cov }}
\newcommand{\var}{\mbox{Var}}
\newcommand{\trace}{\mbox{Tr}}
\newcommand{\diag}{\mbox{Diag}}

\renewcommand{\H}{{\cal H}}

\chapter{Embeddings of Random Realisations}
\label{sec:embR}

Random variables are normally taken to be
real-valued~\cite{williams1991probability}.  Random vectors and
random processes are collections of random variables.  Random
variables can be geometrically arranged in a RKHS according to pairwise
correlations, as described in \S\ref{sec:rsp}. A random variable is
mapped to a single point in the RKHS.

A different concept is embedding the \emph{realisations} of a random
variable into a RKHS.  Let $X$ be an $\X$-valued random variable
encoding the original random variables of interest.  For example,
the random variables $Y$ and $Z$ are encoded by $X = (Y,Z)$ and $\X
= \reals^2$. Let $V \subset \reals^\X$ be a RKHS whose elements are
real-valued functions on $\X$. The realisation $x \in \X$ of $X$
is mapped to the element $K(\cdot,x)$ of $V$, where $K$ is the
kernel of $V$.

The kernel does not encode statistical information as in \S\ref{sec:rsp}.
Its purpose is ``pulling apart'' the realisations of $X$. A classic
example is embedding the realisations of $X=(Y,Z) \in \reals^2$
into an infinite-dimensional RKHS by using the Gaussian kernel
(\S\ref{sec:gk}). \emph{Linear} techniques for classification and
regression applied to the RKHS correspond to \emph{nonlinear}
techniques in the original space $\reals^2$.

If the realisations are pulled sufficiently far apart then a
probability distribution on $X$ can be represented uniquely by a
point in the RKHS. The kernel is then said to be characteristic.
Thinking of distributions as points in space can be beneficial
for filtering, hypothesis testing and density estimation.

\noindent\textbf{Remark:}
Information geometry also treats distributions as points in space.
There though, the differential geometry of the space encodes intrinsic
statistical information.  The relative locations of distributions
in the RKHS depend on the chosen kernel.

RKHS was first presented in the finite dimensional setting, and this choice is done here as well, for presenting  the interest  of embedding random variables in  a RKHS. In section \ref{basics:sec}, 
the theory of random variables with values in Hilbert space is outlined as a preparation. Next,    embeddings of random variables
in RKHS are presented and studied from the probabilistic as well as the empirical points of view.  Some generalizations and considerations needed in chapter \ref{ch:appemb} close the chapter.

\section{Finite Embeddings}
\label{sec:fe}

Definitions and concepts concerning the embedding of random
realisations in a RKHS are more easily grasped when the number of
possible outcomes is finite.

Let $X$ be an $\X$-valued random variable where $\X = \{1,2,\cdots,n\}$.
Its distribution is described by $p_1,\cdots,p_n$ where $p_x$ is
the probability of $X$ taking the value $x$.  Choose the RKHS 
to be Euclidean space $\reals^n$.  Its kernel is the identity
matrix.  The realisation $x \in \X$ is therefore embedded as the
unit vector $e_x$. Effectively, the elements $e_1,\cdots,e_n$
of the RKHS are chosen at random with respective probabilities
$p_1,p_2,\cdots,p_n$. This choice of kernel leads to
the embedded points being uniformly spaced apart from each
other: $\|e_i - e_j \| = \sqrt{2}$ for $i \neq j$.

\noindent\textbf{Remark:} This embedding is used implicitly
in~\cite{elliott1995hidden} to great advantage. The elements of
$\X=\{1,\cdots,n\}$ are the possible states of a Markov chain.
Representing the $i$th state by the vector $e_i$ converts nonlinear
functions on $\X$ into linear functions on $\reals^n$, as
will be seen presently.

When $n>1$, the distribution of $X$ cannot be recovered from its
expectation $E[X] = p_1 + 2p_2 + \cdots + n p_n$.  However, if
$\check X$ is the embedding of $X$ then $E[\check X] = p_1 e_1 +
p_2 e_2 + \cdots + p_n e_n$. The distribution of $X$ is fully
determined by the expectation of its embedding! 
\emph{Points in the RKHS can represent distributions.}

This has interesting consequences, the first being that the
distribution of $X$ can be estimated from observations $\check x_1,
\cdots, \check x_M \in \reals^n$ by approximating $E[\check X]$ by
the ``sample mean'' $\frac1M \sum_{m=1}^M \check x_m$.  Hypothesis
testing (\S\ref{sec:sigdet}) reduces to testing which side of a
hyperplane the sample mean lies: the two distributions under test
are thought of as two points in the RKHS. An observation, or the
average of multiple observations, is a third point corresponding
to the estimated distribution. Intuitively, the test is based on
determining which of the two distributions is closer to the
observation.  The following lemma validates this.  The probabilities
$p_i$ are temporarily written as $p(i)$.

\begin{lemma}
Let $p$ and $q$ be two probability
distributions on the $\X$-valued random variable $X$,
where $\X=\{1,\cdots,n\}$. Given $M$ observations
$x_1,\cdots,x_M$ of $X$, the log likelihood ratio
$\ln \left\{\prod_{m=1}^M \frac{p(x_m)}{q(x_m)} \right\}$ equals
$\langle v, \sum_{m=1}^M \check x_m \rangle$ where
$v_i = \ln \frac{p(i)}{q(i)}$.
\end{lemma}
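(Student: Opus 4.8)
The plan is to unwind the logarithm of the product into a sum and then recognise each summand as a coordinate of $v$ extracted by an inner product with the corresponding embedded observation. First I would apply $\ln\left\{\prod_{m=1}^M a_m\right\} = \sum_{m=1}^M \ln a_m$ to rewrite the log likelihood ratio as $\sum_{m=1}^M \ln\frac{p(x_m)}{q(x_m)}$. By the definition of $v$, the $m$th summand is precisely $v_{x_m}$, the $x_m$th entry of the vector $v \in \reals^n$.

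Next I would recall that the RKHS in play is $\reals^n$ with the standard Euclidean inner product, whose kernel is the identity matrix, so the embedding of the outcome $x_m \in \X$ is the standard basis vector $\check x_m = e_{x_m}$. The reproducing property (definition one of Definition~\ref{def:kernel}), which in this situation reads simply $\langle v, e_i \rangle = e_i^\top v = v_i$, then gives $v_{x_m} = \langle v, e_{x_m} \rangle = \langle v, \check x_m \rangle$. Summing over $m$ and using bilinearity of the inner product to pull the finite sum outside yields $\sum_{m=1}^M \langle v, \check x_m \rangle = \langle v, \sum_{m=1}^M \check x_m \rangle$, which is the asserted identity.

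The only point needing a word of care is that $v$ be well-defined: this requires $p(i) > 0$ and $q(i) > 0$ for every outcome $i$ that can occur, so that each ratio and its logarithm makes sense; under the standing assumption that $p$ and $q$ are genuine distributions with full support on $\X$ this is automatic. Beyond this bookkeeping there is no real obstacle. The content of the lemma is just that, because the identity kernel spaces the outcomes as mutually orthogonal unit vectors, the statistic $\sum_{m=1}^M \check x_m$ records exactly how many times each outcome was observed, and taking the inner product with $v$ performs the weighted count that the likelihood ratio test needs.
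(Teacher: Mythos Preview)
Your proof is correct and follows essentially the same route as the paper's: both rewrite the log of the product as a sum, identify $\check x_m = e_{x_m}$, use $\langle v, e_i \rangle = v_i$ to recognise each summand as $\langle v, \check x_m \rangle$, and then sum. The paper's version is simply more terse and omits your remarks on well-definedness and interpretation.
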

\begin{proof}
If $x_m = i$ then $\check x_m = e_i$ and $\langle v, \check x_m \rangle
= v_i = \ln \frac{p(i)}{q(i)} = \ln \frac{p(x_m)}{q(x_m)}$.
Therefore $\ln \left\{\prod \frac{p(x_m)}{q(x_m)} \right\}
= \sum \ln \frac{p(x_m)}{q(x_m)} = \sum \langle v, \check x_m \rangle$.
\end{proof}

More advanced probability texts tend to favour working with
expectations (and conditional expectations) over probabilities (and
conditional probabilities). This generally involves expressions of
the form $E[f(X)]$ where $f\colon \X \rightarrow \reals$ is a
``test'' function; if $E[f(X)]$ is known for sufficiently many
functions $f$ then the distribution of $X$ can be inferred.

An arbitrary function $f\colon \X \rightarrow \reals$ is fully
determined by its values $f(1),\cdots,f(n)$.  Let $\check f\colon
\reals^n \rightarrow \reals$ be the unique \emph{linear} function
on $\reals^n$ satisfying $\check f(e_x) = f(x)$ for $x \in \X$.
Then
\begin{equation}
E[\check f(\check X)] = \sum_x p_x \check f(e_x) = \sum_x p_x f(x) = E[f(X)].
\end{equation}
The advantage of $E[\check f(\check X)]$ over $E[f(X)]$ is that
$E[\check f(\check X)] = \check f( E[\check X] )$ because $\check
f$ is linear. Once $E[\check X]$ is known, $E[f(X)] = \check
f(E[\check X])$ is effectively known for any $f$. By comparison,
$E[f(X)]$ generally cannot be deduced from $E[X]$.

Correlations take the form $E[f(X)\, g(X)]$. As above, this is
equivalent to $E[\check f(\check X) \, \check g(\check X)]$.
Being linear, $\tilde f(\check X)$ and $\tilde g(\check X)$ can
be rewritten using the Euclidean inner product:
there exist $v_f,v_g \in \reals^n$ such that
$\check f(\check X) = \langle \check X, v_f \rangle$
and $\check g(\check X) = \langle \check X, v_g \rangle$.
Moreover, $v_f$ and $v_g$ can be expressed as linear combinations
$\sum_i \alpha_i e_i$ and $\sum_j \beta_j e_j$
of the unit basis vectors $e_1,\cdots,e_n$. In particular,
\begin{align}
E[f(X)\, g(X)] &=
    E[ \langle \check X, \sum_i \alpha_i e_i \rangle
        \langle \check X, \sum_j \beta_j e_j \rangle ] \\
    \label{eq:abC}
    &=  \sum_{i,j} \alpha_i \beta_j E[ \langle \check X, e_i \rangle
        \langle \check X, e_j \rangle ].
\end{align}
Once the ``correlation'' matrix $C$ given by $C_{ij} = E[ \langle
\check X, e_i \rangle \langle \check X, e_j \rangle ]$ is known,
$E[f(X)\,g(X)]$ can be determined from (\ref{eq:abC}).

\section{Random elements in Hilbert spaces}
\label{basics:sec}

Prior studying the case of the embedding in a RKHS, let us consider the case of random variables which take values in a (separable) Hilbert space $ V$ \cite{BerlTA04,Fort95}. A lemma states that a function from a probability space $(\Omega,{\cal F},P)$ to $ V$ is a random variable with values in $ V$ if and only if  $x^*(X)$ is a real random variable for any linear form $x^*\in  V^*$, the dual of $ V$. Since the dual of a Hilbert space can be identified to itself, the linear form simply writes $x^*(X)=\big< x , X \big>$ where $x\in V$.  

The linear form on $ V$ defined by $\ell_X(x)=E\big[\big< x ,  X \big>\big]$ is bounded whenever $E[\|X\|]<+\infty$.
Indeed,  $\big| \ell_X(x)  \big|\leq \| x \|E\|X\| $ thanks to the Cauchy-Schwartz inequality. Thus,  Riesz representation theorem \cite{DebnM05}
shows the existence of a  unique element $m_X$ of $ V$ such that $E\big[\big< x ,  X \big>\big]=\big< x , m_X \big>$. $m_X$ is the mean element and is denoted as $E[X]$. 

Denote the space  of square integrable random elements of $ V$ as $L^2_ V(P)$ (a short notation for $L^2_ V(\Omega,{\cal F},P)$.)
It is the space of $ V$ valued random variables on $(\Omega,{\cal F},P)$ such that 
 $E\|X\|^2<+\infty$. When  equipped with 
$\big< X , Y \big>_{L^2} := E\big[\big< X ,  Y \big>_ V\big]$,  $L^2_ V(P)$ is itself a Hilbert space.

The covariance operator is a linear operator from $ V$ to $ V$ defined by 
$\Sigma_X: x \longmapsto \Sigma_X(x) := E\big[\big<x  , X -m_X \big>(X-m_X)\big]$. It is bounded whenever $X\in L^2_ V(P)$. 
To see this, suppose  for the sake of simplicity that $EX=0$. Recall that 
that the operator norm is defined as $\|\Sigma_X\| = \sup_{\|x\|\leq 1} \|  \Sigma_X(x)  \|$ and that 
$\|\Sigma_X\| = \sup_{\|x\|\leq 1,\|y\|\leq 1} |\big< y ,  \Sigma_X(x)   \big> |$.
But applying Cauchy-Schwartz inequality leads to
\begin{eqnarray*}
\big|\big< y ,  \Sigma_X(x)   \big> \big| &=& \big|E\big[\big< x ,  X\big>\big< y \big| X\big>\big]\big|\\
&\leq& E\big[\|x\|\|y\|\|X\|^2  \big]  = \|x\|\|y\|E\big[\|X\|^2  \big] 
\end{eqnarray*}
which shows that $\|\Sigma_X\| <+\infty$ whenever $X\in 
L^2_ V(P)$ .

Likewise, we can define a cross-covariance operator between two elements $X,Y$ of $L^2_ V(P)$ by the bounded linear operator from $ V$ to itself defined by $\Sigma_{YX}(x) := E[\big< x ,  (X-m_X)\big>(Y-m_Y)]$. The adjoint operator defined by 
$\big<  \Sigma_{YX}^*(y),  x\big> =\big<y,  \Sigma_{YX}(x)\big>$ is then $\Sigma_{XY}$ since by definition $\Sigma_{XY}(y)=E[\big< y , (Y-m_Y) \big> (X-m_X)]$. The two operators are completely defined by $
\big< y ,  \Sigma_{XY}(x) \big> = E[\big< x ,  X\big>\big< y , Y \big>]$ (if the mean elements are assumed to be equal to zero.) 
The cross-covariance can even be generalized to the case of two different Hilbert spaces. Consider two
random variables $X$ and $Y$ defined on a common probability space $(\Omega,{\cal F},P)$, but taking values in two different 
Hilbert spaces $ V_x$ and $ V_y$. The cross-covariance operator has the same definition, but $\Sigma_{YX}$ has a  domain of definition included in $ V_x$ and a range included in $ V_y$. 

Covariance and cross-covariance operators have furthermore the properties to be nuclear as well as Hilbert-Schmidt operators. 
A Hilbert-Schmidt operator $\Sigma$ from a Hilbert space $ V_1$ to another $ V_2$ is such that $\sum_i \|\Sigma e_i\|^2 = \sum_{ij} \big< f_j ,  \Sigma e_i\big>^2< +\infty$ where $\{e_i\},\{f_j\}$ are orthornomal bases of respectively  $ V_1$ and $ V_2$.
$\| \Sigma \|_{HS} = \sum_i \|\Sigma e_i\|^2 =\sum_{i,j}\big< f_j ,  \Sigma e_i\big>^2 $ is the Hilbert-Schmidt norm (it can be shown independent of the choice of the bases.)

 $\Sigma$ is nuclear if there exist    orthornomal bases $\{e_i\},\{f_j\}$ and a sequence $\{\lambda_i\}$ verifying $\sum_i |\lambda_i | < +\infty$
such that  $\Sigma = \sum_i \lambda_i e_i \otimes f_i $, where the tensorial product is defined as $(e_i \otimes f_i )(x)= \big< x ,  e_i \big> f_i$.  Then $\| \Sigma \|_N = \sum_i |\lambda_i | $ is the nuclear norm. It also holds $\| \Sigma \|_{HS}= \sum_j | \lambda_i|^2$. Furthermore, the three norms of operators so far introduced satisfy the  inequalities $\| \Sigma \| \leq \| \Sigma \|_{HS} \leq \| \Sigma \|_N$.

%
%
%

\section{Random elements in reproducing kernel Hilbert spaces}

The theory of random elements in Hilbert space is  specialized in the sequel to the case where $V$ is a reproducing kernel Hilbert space. Specifically,
the embedding of an $\X$ valued  random variable  is developed. 

Consider the embedding of an $\X$ valued random variable $X$ into a
RKHS $V \subset \reals^\X$ defined by its kernel $K\colon \X \times
\X \rightarrow \reals$. This mean that each realisation $x \in X$ is mapped to
the element $K(\cdot,x) \in V$. This is equivalent to defining a random variable 
 $\check X = K(\cdot,X)$ with values in the RKHS $V$.

The expected value of $\check X = K(\cdot,X)$ 
was defined coordinate-wise
in \S\ref{sec:fe}. This generalises immediately to
defining $E[\check X]$ by
\begin{equation}
\label{eq:EcX}
\langle E[\check X], K(\cdot,z) \rangle =
E[ \langle \check X, K(\cdot, z) \rangle ],\qquad z \in \X,
\end{equation}
where the left-hand side is the $z$th coordinate of $E[\check X]$
and the right-hand side is the expected value of the $z$th
coordinate of $\check X$.
Evaluating $E[\check X]$ is straightforward in principle: if $E[X]
= \int x\,d\mu(x)$ then $E[\langle \check X, K(\cdot,z) \rangle ]
= \int \langle K(\cdot,x), K(\cdot,z) \rangle\,d\mu(x) = \int K(z,x)
\,d\mu(x)$.\\

%

\noindent\textbf{Remark:}
For $E[\check X]$ to be well-defined, $x \mapsto \langle K(\cdot,x),
K(\cdot,z) \rangle = K(z,x)$ must be measurable for every $z \in \X$.
By symmetry of the kernel, this is the same as $K(\cdot,x)$ being
measurable for every $x \in \X$. By~\cite[Theorem 90]{Berlinet:2012uv},
this is equivalent to every element of the RKHS being measurable.\\

While (\ref{eq:EcX}) defines $E[\check X]$ coordinate-wise, a
subtlety is whether $E[\check X] \in \reals^\X$ belongs to the RKHS
$V$. It is usually desirable to require $E[\|\check X\|]$ to be
finite, and this suffices for $E[\check X]$ to be an element of the
RKHS. Rather than prove this directly, an equivalent definition of
$E[\check X]$ is given, using the theory outlined  in the preceding section.

For $v \in V$, $| \langle \check X, v \rangle | \leq \| \check X
\| \|v \|$ by the Cauchy-Schwartz inequality. Therefore, $E[|\langle
\check X, v \rangle |] \leq E[\| \check X\|] \|v\|$. In particular,
if $E[\| \check X\|] < \infty$ then $E[|\langle \check X, v \rangle|]
< \infty$ and $E[\langle \check X, v \rangle]$ is well-defined.
Moreover, $v \mapsto E[\langle \check X, v \rangle]$ is a bounded
linear function. The Riesz representation theorem implies
the existence of an element $m_X \in V$ such that this
linear map is given by $v \mapsto \langle m_X, v \rangle$.
Then $E[\check X]$ is defined to be $m_X$.

Henceforth, this definition of $E[\check X]$ will be adopted,
so that $E[\check X]$ is an element of the RKHS by definition.
The notations $m_X$ and $E[\check X]$ will be used interchangeably
and will be called the \emph{mean element} of $X$.\\

\noindent\textbf{Remark:}
Condition (\ref{eq:EcX}) only required $\langle E[\check X], v \rangle
= E[ \langle \check X, v \rangle ]$ to hold for $v = K(\cdot,z)$,
$z \in \X$. However, the $K(\cdot,z)$ are dense in $V$, hence
the two definitions agree provided $E[\|\check X\|] < \infty$.\\

Finally, let $f\in V$. The mean element completely determine $E[f(X)]$. This is easy to show 
because $E[f(X)]= E[\big< K(.,X) ,  f \big>]=E[\big< \check X ,  f \big>] = \langle m_X,  f \rangle$.
Therefore, knowing $m_x$ allows to evaluate the mean of any transformation of $X$, provided the transformation belongs to $V$.\\

It is convenient to complement $m_X$ with a covariance operator
$\Sigma_X$ capturing $E[(f(X)-E[f(X)])\,(g(X)-E[g(X)])]$ for
$f,g \in V$. Observe
\begin{align*}
\MoveEqLeft
E[(f(X)-E[f(X)])\,(g(X)-E[g(X)])] \\
    &= E[\langle f, \check X-m_X \rangle \langle g, \check X-m_X \rangle ] \\
    &= E[\langle f, \langle g, \check X-m_X \rangle (\check X-m_X) \rangle ] \\
    &= \langle f, E[\langle g, \check X-m_X \rangle (\check X-m_X) ] \rangle \\
    &= \langle f, \Sigma_X(g) \rangle
\end{align*}
where the linear operator $\Sigma_X\colon V \rightarrow V$ is given by
\begin{equation}
\Sigma_X(g) = E[\langle g, \check X-m_X \rangle (\check X-m_X) ].
\end{equation}
Provided $E[\|\check X\|^2] < \infty$, $\Sigma_X$ is well-defined and
its operator norm is finite: $\| \Sigma_X \| = \sup_{\|g\|=1} \|\Sigma_X(g)\|
= \sup_{\|f\|=\|g\|=1} \langle f, \Sigma_X(g) \rangle$.\\

\noindent\textbf{Remark:}
If $E[\|\check X\|^2] = E[K(X,X)] < \infty$ then $E[\|\check X\|]
< \infty$, a consequence of Jensen's inequality~\cite[Section
6.7]{williams1991probability}.  In particular, $E[\|\check X\|^2] <
\infty$ ensures both $m_X$ and $\Sigma_X$ exist.
See also~\cite[\S4.5]{Berlinet:2012uv}.

\section{Universal and characteristic kernels}

%
%
%

An aim of embedding points of a space into a RKHS is to reveal features that are difficult to study or see in the initial space. 
Therefore, the RKHS has to be sufficiently rich to be useful in a given application. 
For example, the fact that the mean element reveals $E[f(X)]$ simply as $\big<  m_x, f \big>$ is an interesting property 
essentially if this can be applied for a wide variety of functions $f$. It is thus desirable that $V\subset \R^\X$ is a sufficiently rich function space.
The richness of the RKHS is obviously provided by some properties of the kernel, since a kernel 
gives rise to a unique RKHS. 

Two important notions occur: the notion of universality, defined initially by Steinwart \cite{Stei01} and the notion of characteristic 
kernel.

Universality  is linked to the denseness of the RKHS into a target space of functions and is therefore linked to the ability of the functions of the RKHS to approximate functions in the target space. Since it depends on the kernel, on the initial space and on the target space, there exist different definitions of universality.  We will need in the sequel kernel universal in the sense that their reproducing kernel Hilbert space is dense in the space of  continuous functions. In some applications, universality refers to denseness in $L^p$ spaces. This is particularly important when dealing with embedding of square integrable random variables. Thus the following definition is considered.
\begin{definition}
Let $\X$ be a locally compact Hausdorff space (such as Euclidean
space).  Denote by $C_0(\X)$ the class of real-valued continuous
functions on $\X$ vanishing at infinity.  Let $K\colon \X \times
\X \rightarrow \reals$ be a bounded kernel for which $K(\cdot,x)
\in C_0(\X)$ for all $x \in \X$.  Then $K$, or its corresponding
RKHS $V$, is \emph{universal} if $V$ is dense in $C_0(\X)$ with
respect to the uniform norm.  (Actually, universality has several
different definitions depending on the class of functions of most
interest~\cite{SripFL11}.)
\end{definition}

The notion of characteristic kernel is useful when embedding probability
measures and is linked to the ability to discriminate two original measures in
the RKHS. Let $K:\X \times \X \longrightarrow \R$ a kernel on $\X$. Let ${\cal
P}$ the set of probability measures on $\X$ equipped with its Borel $\sigma$
algebra. Then $K$  (and thus $V$) is characteristic if and only if the map
from ${\cal P}$ to $V$ defined by $P \mapsto E_P[K(., X)]$ is injective, where
$X$ is any random variable distributed under $P$.

There exist links between the two notions which are studied in
\cite{SripFL11}. For this paper and applications in signal processing, it
suffices to know that some well-known and often used kernels are universal and
characteristic. Importantly, it is shown in  \cite{SripFL11}, Proposition 5,
that for radial kernels on $\R^d$, kernels are universal (in many diffferent
sense)  if and only if they are characteristic. Furthermore the proposition
gives other necessary and sufficient condition for universality. For example,
strict positive-definiteness of the radial kernel insures it is
characteristic.

Thus, important kernels such as the Gaussian kernel, $K(x,y)=\exp(-\sigma \|
x-y \|^2)$, the inverse multiquadrics $K(x,y)= (c +  \| x-y \|^2)^{-\beta},
\beta>d/2)$ are characteristic and universal.

\section{Empirical estimates of the mean elements and covariance operators}

The mean element $m_X$ can be estimated by the sample mean
\begin{equation}
\hat m_X^N = \frac1N \sum_{i=1}^N K(\cdot,x_i)
\end{equation}
where $x_1,\cdots,x_N$ are independent realisations of $X$.  The
law of large numbers implies $\hat m_X^N$ converges to $m_X$ almost
surely~\cite{HoffmannJorgensen:1976fd}. It also converges
in quadratic mean:
\begin{align*}
E[\|\hat m_X^N - m_X\|^2] &=
    \frac1{N^2} \, E\left[\left\langle \sum_{i=1}^N K(\cdot,X_i) - m_X,
        \sum_{j=1}^N K(\cdot,X_j) - m_X \right\rangle \right] \\
    &= \frac1{N^2} \sum_{i,j=1}^N E\left[K(X_j,X_i)
        - m_X(\check X_j) - m_X(\check X_i) + \|m_X\|^2 \right] \\
    &= \frac1{N^2} \sum_{i,j=1}^N E\left[K(X_j,X_i)
        - \|m_X\|^2 \right].
\end{align*}
If $i \neq j$ then $E[K(X_j,X_i)] = \|m_X\|^2$ by independence:
\begin{align*}
E[K(X_j,X_i)] &= E[\langle K(\cdot,X_i), K(\cdot,X_j) \rangle ] \\
    &= \langle E[K(\cdot,X_k)], E[K(\cdot,X_j)] \rangle \\
    &= \langle m_X, m_X \rangle.
\end{align*}
Therefore,
\[
E[\|\hat m_X^N - m_X\|^2] = \frac1N \left( E[K(X,X)] - \|m_X\|^2 \right)
    \rightarrow 0
\]
as $N \rightarrow \infty$, proving convergence in quadratic mean.

A central limit theorem also exists. For arbitrary $f \in V$,
$\langle f, \hat m_X^N \rangle = \frac1N \sum_{i=1}^N f(x_i)$ is
the sample mean of real-valued independent random variables $f(x_i)$ and
therefore $\sqrt{N} \langle f, \hat m_X^N - m_X \rangle$ converges to
a Gaussian random variable having zero mean and the same variance as $f(X)$.
This implies $\sqrt{N}(\hat m_X^N - m_X)$ converges weakly on $V$ to
a Gaussian distribution~\cite[Theorem 108, full proof and statement]{Berlinet:2012uv}.

Estimating the covariance of $\check X$ in $V$ can be
reduced to estimating the mean of $(\check X - m_X) \otimes (\check
X - m_X)$ in $V \otimes V$, as now explained.

Given $f,g \in V$, define $f \otimes g$ to be the bilinear continuous
functional $V \times V \rightarrow \reals$ sending $(u,v)$ to
$\langle f,u \rangle \langle g,v \rangle$. That is,
\begin{equation}
(f \otimes g)(u,v) = \langle f,u \rangle \langle g,v \rangle.
\end{equation}
The space generated by linear combinations of such functionals
is the ordinary tensor product of $V$ and $V$. Completing this
space results in the Hilbert space tensor product $V \otimes V$.
The completion is with respect to the inner product defined by
\begin{equation}
\langle f_1 \otimes g_1, f_2 \otimes g_2 \rangle =
    \langle f_1, f_2 \rangle \langle g_1, g_2 \rangle.
\end{equation}
If $V$ is a RKHS then $V \otimes V$ is also a RKHS.

\begin{theorem}
Let $V_1$ and $V_2$ be two RKHSs with kernels $K_1$ and $K_2$.  Then
the Hilbert space tensor product $V = V_1 \otimes V_2$ is a RKHS
with kernel $K((x_1,x_2),(y_1,y_2)) = K_1(x_1,y_1) K_2(x_2,y_2)$.
\end{theorem}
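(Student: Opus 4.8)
The plan is to verify that the claimed kernel $K$ on the product index set $\X_1 \times \X_2$ satisfies the defining equation of Definition~\ref{def:Kfn}, namely that $\langle h, K(\cdot,(y_1,y_2)) \rangle = h(y_1,y_2)$ for every $h$ in the tensor product RKHS and every $(y_1,y_2) \in \X_1 \times \X_2$. Since the kernel of a RKHS is unique, establishing this identity (together with the observation that $K(\cdot,(y_1,y_2))$ is genuinely an element of $V_1 \otimes V_2$) will finish the proof. First I would note that $V_1 \otimes V_2$ is indeed a RKHS --- its evaluation functionals are bounded because pointwise evaluation on a simple tensor $f_1 \otimes f_2$ at $(y_1,y_2)$ is $f_1(y_1) f_2(y_2)$, which is controlled by $\|f_1\|\,\|f_2\| = \|f_1 \otimes f_2\|$ via boundedness of evaluation on each factor, and this extends to the completion --- so Definition~\ref{def:Kfn} applies and its kernel exists and is unique.

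The key computation is on simple tensors. I would check first that $K(\cdot,(y_1,y_2))$, viewed as the function $(x_1,x_2) \mapsto K_1(x_1,y_1)K_2(x_2,y_2)$, equals the simple tensor $K_1(\cdot,y_1) \otimes K_2(\cdot,y_2)$, which by definition lies in $V_1 \otimes V_2$. Then for a simple tensor $h = f_1 \otimes f_2$,
\begin{equation*}
\langle f_1 \otimes f_2,\ K_1(\cdot,y_1) \otimes K_2(\cdot,y_2) \rangle
= \langle f_1, K_1(\cdot,y_1) \rangle \, \langle f_2, K_2(\cdot,y_2) \rangle
= f_1(y_1)\, f_2(y_2) = h(y_1,y_2),
\end{equation*}
using the defining inner product on $V_1 \otimes V_2$ and the reproducing property in each factor.

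The remaining step is to extend this identity from simple tensors to all of $V_1 \otimes V_2$. By bilinearity it extends immediately to finite linear combinations of simple tensors, i.e.\ to the dense subspace $V_1 \odot V_2$ spanned by them. For a general $h \in V_1 \otimes V_2$, take a sequence $h_n \in V_1 \odot V_2$ with $\|h_n - h\| \to 0$; since strong convergence in a RKHS implies pointwise convergence, $h_n(y_1,y_2) \to h(y_1,y_2)$, while continuity of the inner product gives $\langle h_n, K_1(\cdot,y_1) \otimes K_2(\cdot,y_2)\rangle \to \langle h, K_1(\cdot,y_1) \otimes K_2(\cdot,y_2)\rangle$. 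Equating limits yields $\langle h, K(\cdot,(y_1,y_2))\rangle = h(y_1,y_2)$ for all $h$, so $K$ is the kernel of $V_1 \otimes V_2$ by uniqueness.

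I expect the only genuine subtlety --- the "hard part" --- to be a bookkeeping one: being careful that elements of $V_1 \otimes V_2$ are honestly functions on $\X_1 \times \X_2$ (not merely abstract tensors), so that "$h(y_1,y_2)$" has the meaning intended. This is where boundedness of the evaluation functionals on $V_1 \otimes V_2$ is used: it guarantees that the abstract completion can be realised inside $\reals^{\X_1 \times \X_2}$, with the value of a limiting tensor at $(y_1,y_2)$ being the limit of the values of approximating simple-tensor combinations. Once that identification is in place, everything else is routine density and continuity.
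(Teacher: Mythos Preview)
The paper does not actually prove this theorem; it simply cites Theorem~13 of Berlinet--Thomas-Agnan. So there is no paper proof to compare against, only your argument to assess.

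Your argument is correct and follows the standard route. The one place where you should tighten the logic is the boundedness of evaluation functionals on the \emph{algebraic} tensor product $V_1 \odot V_2$: saying that $|f_1(y_1)f_2(y_2)| \leq \|f_1\|\,\|f_2\| = \|f_1\otimes f_2\|$ controls evaluation on a simple tensor, but an element of $V_1\odot V_2$ is a \emph{sum} $\sum_i f_{1,i}\otimes f_{2,i}$, and bounding a sum of such terms by the tensor norm of the sum is not immediate from that inequality alone. The clean way --- which you essentially use later anyway --- is to note that on $V_1\odot V_2$ evaluation at $(y_1,y_2)$ is exactly $\langle\,\cdot\,,K_1(\cdot,y_1)\otimes K_2(\cdot,y_2)\rangle$, hence bounded with norm $\sqrt{K_1(y_1,y_1)K_2(y_2,y_2)}$. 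That bound then passes to the completion, giving the RKHS structure and simultaneously making the density argument in your final paragraph non-circular. With that adjustment the proof is complete.
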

\begin{proof}
This is Theorem 13 of~\cite{Berlinet:2012uv}.
\end{proof}

Define $\Sigma_{XX} = E[ (\check X - m_X) \otimes (\check X - m_X) ]$.
It encodes the same information as $\Sigma_X$ because
$\Sigma_{XX}(f,g) = E[ \langle f, \check X - m_X \rangle
\langle g, \check X - m_X \rangle ]$. Being a mean, it can be estimated by
\begin{equation}
\hat m_{XX}^N = \frac1N \sum_{i=1}^N (K(\cdot,x_i) - \hat m_X^N)
\otimes (K(\cdot,x_i) - \hat m_X^N).
\end{equation}

\section{Generalisations and further considerations}

A covariance operator between two random variables $X$ and
$Y$ defined on a common probability space can be defined. The variables
are embedded in RKHSs $V_x$ and $V_y$ using kernels $K_x$ and $K_y$,
and the covariance operator $\Sigma_{YX}$ is the linear operator
from  $V_x$ to $V_y$ defined as $\Sigma_{YX} f=E[ \langle f, \check
X - m_X \rangle (\check Y - m_Y ) ] $. It also has the tensor
representation   $\Sigma_{YX} = E[ (\check X - m_X) \otimes (\check
Y - m_Y) ]$, is a mean in the tensor product space and can thus be
estimated by
\begin{equation}
\hat m_{YX}^N = \frac1N \sum_{i=1}^N (K_x(\cdot,x_i) - \hat m_X^N)
\otimes (K_y(\cdot,x_i) - \hat m_Y^N).
\end{equation}

The covariance operators are nuclear operators and  Hilbert-Schmidt
operators (see section \ref{basics:sec}.)  This fact allows
to construct measures of independence and of conditional independence
in an elegant and efficient way, as developed in 
\S\ref{testindep:ssec}.  As nuclear operators, they admit a nuclear
decomposition as
\begin{eqnarray*}
\Sigma = \sum_i \lambda_i e_i \otimes f_i
\end{eqnarray*}
where $\{e_i\},\{f_j\}$ are orthonormal bases of the spaces of
interest.

If the operator is the covariance operator of a random variable
embedded in the space, it is a positive operator. The $\lambda_i$
are the eigenvalues and are positive or zero, and the $\{e_i\}$ are
the eigenfunctions of the operator.  In this case, the Hilbert-Schmidt
theorem  \cite{DebnM05} states that any element of the space admits the decomposition
\begin{eqnarray*}
x= \sum_i \lambda_i \big< x ,  e_i\big> e_i   +  x'
\end{eqnarray*}
where $x'  \in {\cal N}(\Sigma)$ is in the null space of the
covariance operator. The range of the operator  ${\cal R}(\Sigma)$
is spanned by the eigen vectors $\{ e_i \}$.  Thus the range of
$\Sigma$ is the subset of $V$ of those functions $f\in V$ not in
the null space of $\Sigma$ verifying $\sum_i \lambda_i^2 \big< f
, e_i \big>^2 < +\infty$. Restricting the domain of $\Sigma$ to
the space of function of $V$ that can be written $\sum_i \big< f
,  e_i\big>/ \lambda_i$ for some $f$ in the range of $\Sigma$,
we define a bijective restriction, and the inverse is unambiguously
defined as $\Sigma^{-1}f = \sum_i \big< f ,  e_i\big>/ \lambda_i$
for $f\in {\cal R}(\Sigma)$. The operator is then said invertible on its range.
 Note that since $\Sigma $ is positive
the $\lambda_i$ considered are strictly positive, $\lambda_i=0$
characterizing members of the null space.

Hilbert-Schmidt operators are compact. If a compact operator is
invertible, then necessarily its inverse is unbounded, otherwise
$A A^{-1}=I$ would be compact. But $I$ is not compact in infinite dimension.
A compact operator transforms a bounded set into a precompact set
(every sequence contains a convergent subsequence). But  the unit
ball is not precompact in infinite dimension. One can construct a
sequence in it from which no subsequence is  convergent \cite{DebnM05}.

The problem of unboundedness of the inverse of  Hilbert-Schmidt
operators is at the root of  ill-posed inverse problems, since
unboundedness implies non continuity. Thus, two arbitrary close
measurements in the output space may be created by two inputs
separated by a large distance in the input space. This is at the
heart of regularization theory.  In \S\ref{ch:appemb}, inverting covariance
operators  is needed in almost all developments in estimation or
detection. In theoretical development, we will assume the covariance
operators are invertible, at least on their range (see above).
However though, the inverse may be unbounded, and  
regularizing the inversion is needed.  Practically,  Tikhonov approach will systematically
be called for   regularization \cite{TikhA77}.
Typically, the inverse of $\Sigma$ will be replaced by $(\Sigma +
\lambda I)^{-1}$, where $I$ is the identity operator, or even by
$(\Sigma + \lambda I)^{-2} \Sigma$. Parameter $\lambda$ is the
regularization parameter, which can be chosen in some problems by
either inforcing  solutions (which depend on the inverse) to satisfy
some constraints, or by studying convergence of solutions as the
number of data grows to infinity.


\chapter{Applications of Embeddings}
\label{ch:appemb}

In this chapter, the use of embeddings in RKHS for signal processing applications is illustrated. 
 Many applications exist in signal processing and machine learning. We made here arbitrary choices 
that cover topics in signal processing mainly. The aim is not only to cover some topics, but also to provide some practical 
developments which will allow the reader to implement some algorithms. 

Since practical developments deal with finite amount of data,
only finite dimensional subspaces of possibly infinite dimensional RKHS are used empirically. This is explained  in a first paragraph and used to develop matrix representations for the  mean elements and the    operators presented in the previous chapter (such as covariance operators.)
The following paragraphs illustrate applications of embeddings. 
We first discuss embeddings for application of statistical testing in signal processing:
Comments on the use of the so-called deflection criterion for signal detection are made;  The design of independence and conditional independence measures are then presented. Filtering is discussed next. A first approach elaborate on the embedding of Bayes rule into a RKHS, while a second approach directly deals with the embeddings of the realisations of random signals, and how they are used for optimal filtering.

\section{Matrix representations of mean elements and covariance operators}
\label{matrixrep:ssec}

Practically, mean elements and covariance operators are used by applying them to functions in the RKHS.  Furthermore, the operators estimated are usually of finite rank. In inference problems, 
and when dealing with a finite number of observations $\{x_i\}_{i=1,\ldots,N}$ the representer theorem \cite{KimeW70,SchoS02}  states that the optimizers of some empirical risk function are to be searched for in the subspace $W_x$ of $ V_x$ generated by $\{K_x(.,x_i)\}_{i=1,\ldots,N}$ (from now on, a kernel is indexed to stress different variables and different kernels and spaces jointly.)  This is a direct consequence of the reproducing property. Any empirical risk to be minimized with respect to functions $f\in V_x$ is evaluated at the points $x_i$. Let
$f(.)=f_{\|}(.) +f_\perp(.)$, where $ f_{||}(.)\in W_x$ and $f_\perp(.) \in W_x^\perp$. Then $f(x_i) =\big< f , K_x(.,x_i) \big>=f_{\|}(x_i)$ and $f_\perp(x_i)=0$. The useful subspace of $ V_x$ is $W_X$, and
 practically, the empirical mean element or  operators are applied to functions in the form
\begin{eqnarray}
f(.) = \sum_{i=1}^N \alpha_i K_x(., x_i)
\end{eqnarray}

Consider first the action of the mean element  $m=\big< f , \widehat{m}^N_X  \big>$. The reproducing property leads to 
\begin{eqnarray}
m&=&\big< \sum_{i=1}^N \alpha_i K_x(., x_i) ,  \frac{1}{N} \sum_{i=1}^{N}K_x(., x_i)  \big>  \nonumber \\ 
&=&\frac{1}{N} \sum_{i,j=1}^{N}  \alpha_i  K_x(x_i, x_j)  \quad = \quad \vones_N^\top \vK_x \valpha 
\end{eqnarray}
where the Gram matrix $\vK_x$ has entries $\vK_{x,ij} = K_x(x_i, x_j) $, 
and where we introduced the vectors $\valpha=(\alpha_1,\ldots,\alpha_N)^\top$  and $\vones_N = \vones/N= (1/N,\ldots,1/N)^\top $. 
This is precisely the inner product of two elements in the finite dimensional RKHS with kernel $\vK_x$ and internal representations
$\vones_N$ and $\valpha$.
Furthermore, this formula leads to $\big< K(.,x_i) ,  \widehat{m}^N_X \big> =\delta_i^\top \vK_x \vones_N$
where $\delta_i$ is a vector of zeros except a 1 at the $i$th position. 

Hence, the mean can be calculated without embedding explicitly the data into the RKHS, but just by using the kernel evaluated at the data points.

To get the analytic form for the application of   the covariance operator to functions of $V_X$,    consider first the interpretation of the covariance as the mean element in the tensor product $V_x \otimes V_y$, and evaluate
\begin{eqnarray}
\lefteqn{\big< f(.) ,  \widehat{m}^N_{XY}(.,v)\big>_{V_x}}  \nonumber \\ 
&=&\big< \sum_{i=1}^N \alpha_i K_x(., x_i) ,  \frac{1}{N} \sum_{i=1}^{N}(K_x(., x_i)-\widehat{m}_X) (K_y(v, y_i)-\widehat{m}_Y(v)) \big>_{V_x} \nonumber \\
  &=&   \frac{1}{N} \sum_{j=1}^{N}  \big( (\valpha^\top\vK_x)_j -  \valpha^\top\vK_x \vones_N \big)(K_y(v, y_j)-\widehat{m}_Y(v))
\end{eqnarray}
Applying his result to a function $g\in V_y$ allows to obtain
\begin{eqnarray}
\lefteqn{\big< g ,  \widehat{ \Sigma}_{YX} f \big> =  \cov[g(Y) , f(X)] } \nonumber  \\
&=& \frac{1}{N} \sum_{j=1}^{N}  \big( (\valpha^\top\vK_x)_j -  \valpha^\top\vK_x \vones_N \big) \big<  \sum_{i=1}^N \beta_i K_y(v, y_i)  ,  K_y(v, y_j)-\widehat{m}_Y(v)\big>   \nonumber \\
&=&\frac{1}{N} \valpha^\top\vK_x (\vI - \frac{1}{N} \vones \vones^\top ) \vK_y \vbeta
\end{eqnarray}
The matrix $\vC_N=\vI - \frac{1}{N}\vones \vones^\top$ is the so-called centering matrix. If the mean elements  
$\widehat{m}_X$ and $\widehat{m}_Y$ are known to be identically equal to zero,  then the centering matrix does not appear in the calculations.  Note that $\vC_N$ is idempotent, $\vC_N^2=\vC_N$.

To directly study how the empirical covariance operator acts,   the alternative definition of the covariance operator as a linear operator is called for. Let $f(.)= \sum_{i=1}^N \alpha_i K_x(., x_i)$ in $ V_x$ and 
$g(.)= \sum_{i=1}^N \beta_i K_y(., y_i)$ in $ V_y$ such that $g=\widehat{\Sigma}_{YX} f$.

Assuming $\widehat{m}_X=0$ and $\widehat{m}_Y=0$
for the sake of simplicity, the empirical covariance operator can  be written as 
\begin{eqnarray}
\widehat{\Sigma}_{YX}f =\frac{1}{N}\sum_i \big< f(.) ,  K_x(.,x_i)\big>K_y(v, y_i)
\end{eqnarray}
Then  evaluating $g=\widehat{\Sigma}_{YX} f$ at the data points gives
\begin{eqnarray}
g(y_k) &=&\sum_j \beta_j  K_y(y_k,y_j)  
      \quad = \quad  (\vK_y \vbeta)_k   \nonumber \\
              &=&  \frac{1}{N}\sum_i \big< f(.) ,  K_x(.,x_i)\big>K_y(y_k, y_i) \nonumber  \\
              &=& \frac{1}{N}\sum_{i,j} \alpha_j K_x(x_i,x_j) K_y(y_k, y_i) \!\quad =\! \quad \frac{1}{N} (\vK_y \vK_x \valpha)_k
\end{eqnarray}
Thus $\vbeta=N^{-1} \vK_x \valpha$ and $\big< g ,  \widehat{ \Sigma}_{YX} f \big>  = N^{-1}\vbeta^\top \vK_y \vK_x \valpha$ is recovered.

Finally, the application of the inverse of a covariance operator is important to study.
As discussed earlier,  the effect of the regularised version of the operator 
 $\widehat{ \Sigma}_{r,XX}= \widehat{ \Sigma}_{XX}+ \lambda \vI$ is studied, $\vI$ being the identity (it also denotes the identity matrix in finite dimension.)
Let $f=\widehat{\Sigma}_{r,XX}^{-1}g$, or $g=\widehat{\Sigma}_{r,XX}f$ where $f$ and $g$ are in the RKHS.
Using the decomposition  $f=f_{\|}+f_\perp$  recalled earlier,
 $f_{\|}(.)=\sum_i \alpha_i K_x(.,x_i)$,  $ f(x_k)=f_{\|}(x_k)$ and $f_\perp(x_k)=0$. Thus
\begin{eqnarray}
g(.) &=& \frac{1}{N}\sum_i f(x_i) K_x(., x_i) + \lambda f(.) \nonumber  \\
     &=&   \frac{1}{N}\sum_{i,j} (\vK \valpha)_i  K_x(., x_i) + \lambda\sum_i \alpha_i K_x(.,x_i)   + \lambda f_\perp(. ) 
\end{eqnarray}
Then, evaluating $g(.) $ at all $x_i$, 
\begin{eqnarray}
\vK_x \vbeta = \frac{1}{N} (\vK_x +N\lambda I) \vK_x \valpha
\end{eqnarray}
or solving, the action of the regularized inverse is obtained as
\begin{eqnarray}
\valpha  = N  (\vK_x +N\lambda I) ^{-1} \vbeta
\end{eqnarray}

\section{Signal detection and the deflection criterion}

Signal detection is usually modeled as a binary testing problem: Based on the observation of a signal, a detector has to decide which hypothesis among $H_0$ or $H_1$ is true. The detector is in general a functional of the observation,  denoted here as a filter. 
In \cite{PiciD88,PiciD90}, Picinbono\&Duvaut developed the theory of Volterra filters for signal detection. 
Recall that Volterra filters are polynomial filters. If $x(n)$ is an input signal, the output of a $M$th order Volterra filter reads
\begin{eqnarray}
y(n)= h_0+ \sum_{i=1}^M \sum_{j_1,\ldots, j_i}h_i(j_1,\ldots, j_i) x(n-j_1) \times \ldots \times x(n-j_i)
\end{eqnarray}
where the functions $h_i$ satisfy some hypothesis ensuring the existence of the output $y(n)$. The first term $i=1$ is nothing but a linear filter; the term $i=2$ is called a quadratic filter, and so on.  If the range of summation of the $j_i$'s is finite for all $i$, the filter is said to be of finite memory.

In signal detection theory, if the detection problem is set up as a binary hypothesis testing problem, different approaches exist to design an efficient detector.
For example, in the Neyman-Pearson approach, the detector that maximizes the probability of detection subject to  a maximal given probability of false alarm is seeked for.  Recall for instance that the optimal detector  in this approach
of a known signal in Gaussian noise is the matched filter, which is a linear filter, and thus a first order Volterra filter.

\noindent {\bf Deflection.} A simpler approach relies on the so-called deflection criterion. This criterion does not require the full modeling of the probability laws
under each hypothesis, as is the case for Neyman-Pearson approach.
The deflection is a measure that quantifies a contrast (or distance) between the two hypotheses for a particular detector. The greater the deflection the easier the detection, because the greater the separation between the two hypotheses. 
 Let an observation
$x$ be either distributed according to $P_0$ under hypothesis $H_0$ or according to $P_1$ under hypothesis 
$H_1$.  Let $y(x)$ be a test designed to decide whether $H_0$ or $H_1$ is true. The deflection is defined
as 
\begin{eqnarray}
d(y) = \frac{(E_1[ y ] - E_0[y])^2}{\var_0[y]}
\end{eqnarray}
where the subscript $0$ or $1$ corresponds to the distribution under which averages are evaluated. As mentionned above, the deflection quantifies the ability of the test to separate the two hypotheses.

In general the structure of the detector is imposed, and the best constrained structure which maximises the deflection is seeked for.
If a linear structure is  chosen, such as  $y(x)=h^\top x$ in finite dimension,  the matched filter is recovered. Picinbono\&Duvaut studied the optimal detector according to the deflection when it is constrained to be a Volterra filter of the observation. They particularly develop the geometry of the filter, recoursing to the Hilbert space underlying Volterra filters of finite orders. An interesting fact outlooked in \cite{PiciD88,PiciD90} is that this Hilbert space is the finite dimensional reproducing kernel Hilbert space generated by the kernel $K(x,y)= (1+ x^\top y)^M$.  This is easily seen using a very simple example for $M=2$. Consider the map defined by 
\begin{eqnarray}
\Phi&:& \R^2 \longrightarrow  V \nonumber \\
& &  \vx \longmapsto \big(1 ,\sqrt{2} x_1, \sqrt{2} x_2,     x_1^2,  \sqrt{2} x_1   x_2 , x_2^2 \big)^\top
\end{eqnarray}
which embeds $\R^2$ into a subspace $  V  $ of $\R^6$. $ V$ is a reproducing kernel Hilbert space whose kernel is $K(x,y)= (1+ \vx^\top \vy)^2$.
Indeed, a direct calculation shows that $\Phi(\vx)^\top \Phi(\vy) = (1 + \vx^\top \vy)^2$.  Now, consider
$y(n)= \vH^\top \Phi\big( (x(n), x(n-1) \big)$ where $\vH \in \R^6$. $ y(n)$ is then the output of a linear-quadratic  Volterra filter,
with $h_0=H_1, h_1(1)=H_2/\sqrt{2},  h_1(2)=H_3/\sqrt{2}, h_2(1,1)=H_4,h_2(1,2)=H_5/\sqrt{2}, h_2(2,2)=H_6$, and all other parameters set to zeros. This simple example generalizes to any $M$ and any finite memory.

The aim here is to  analyse the deflection for filters living in arbitrary RKHS. 

\noindent {\bf Detecting in a RKHS.} Consider $ V$ to be the RKHS associated with kernel $K$. We assume the kernel $K$ to be characteristic
 ({\it i.e.} the mapping sending a probability to the mean element in $ V$ is injective.)
Let $\mu_i$ be the embedding of $P_i$, or $\mu_i(.)=E_i[K(., x)]$. 
Let $\Sigma_0:  V \rightarrow  V$ be the covariance operator of a random variable distributed under $P_0$. 
The detector is seeked for as a function $f$ in the RKHS that maximizes the deflection. Thus $y=f(x)=\big< f(.) , K(.,x) \big>$. Then 
$E_i[y] =E_i[f(x)]= \big< \mu_i ,  f \big>$ (definition of the mean element.) Furthermore, the definition of the 
covariance operator gives $\var_0[y]=\big< f , \Sigma_0f \big>$. The deflection for $y$ thus reads
\begin{eqnarray}
d(y) = \frac{\big< \mu_1-\mu_0 ,  f\big>^2}{\big< f , \Sigma_0f \big>}
\end{eqnarray}
For ease of discussion,  the covariance operator is  assumed invertible.
Since $\Sigma_0$ is positive definite, so is its inverse, and a new inner product in $ V$ can be defined  by
$\big< f ,  g\big>_0 = \big< f ,  \Sigma_0^{-1}g\big>$. The deflection of $y$ then writes
\begin{eqnarray}
d(y) = \frac{\big< \mu_1-\mu_0 , \Sigma_0 f\big>^2_0}{\big< f , \Sigma_0f \big>}
\end{eqnarray}
and Schwartz inequality offers an easy means to maximize the deflection. Indeed, the following holds
\begin{eqnarray}
d(y) &\leq& \frac{\big< \mu_1-\mu_0 , \mu_1-\mu_0\big>_0 \big< \Sigma_0 f , \Sigma_0 f\big>_0}{\big< f , \Sigma_0f \big>}
\nonumber \\
&=&\big< \mu_1-\mu_0 , \Sigma_0^{-1}\big( \mu_1-\mu_0\big)\big>
\end{eqnarray}
and the maximum is attained when vectors $\mu_1-\mu_0$ and $\Sigma_0 f$ are proportional.
The constant of proportionality is not important, since this amounts to scale function $f$ and does not change the optimal 
deflection. 
The constant of proportionality is thus chosen to be 1. The optimal function hence satisfies $\mu_1-\mu_0=\Sigma_0 f$, or $f=\Sigma_0^{-1}\big( \mu_1-\mu_0\big)$.

The assumption of the invertibility of the covariance operator is not fundamental in the derivation. If it is not invertible,  the 
derivative of the deflection (in some functional sense such as the Gateaux derivative) is used to show that the maximum is obtained when $\mu_1-\mu_0$ and $\Sigma_0 f$ are proportional.

If no structure is imposed to the filter, and if  a  Neyman-Pearson approach is taken to solve the detection problem, the 
best strategy is to compare the likelihood ratio to a threshold, chosen in order to satisfy a constraint on the error of the first kind.
 A link between the optimal detector in the deflection sense and the likelihood ratio can be established. 
Precisely let $r(x)=l(x)-1=p_1(x)/p_0(x) -1$. In the following,  $r(x)$ is assumed square integrable under hypothesis $H_0$.

First note the following. Any $f$ for which $f(x)\in L^2$ satisfies $E_0[f(x)r(x)]=E_1[f(x)]-E_0[f(x)]$. Thus any $f\in  V$ such that 
$f(x)\in L^2$ satisfies $E_0[f(x)r(x)] = \big< f , \mu_1-\mu_0 \big>$. In particular, $E_0[K(u,x)r(x)] = \big< K(u,.) , \mu_1-\mu_0 \big> =\big( \mu_1-\mu_0\big)(u)$.
If we seek for the best estimator of the likelihood ratio in $ V$ minimizing the mean square error  under $H_0$, we must center it and then estimate $r(x)$, since $E_0[p_1(x)/p_0(x)]=1$.  Then the optimal estimator will lead to an error orthogonal (in the $L^2$ sense) to $ V$, and thus the following equation has to be solved
\begin{eqnarray}
E_0[g(x) (r(x) -\widehat{r}(x))]=0, \quad \forall g \in  V
\end{eqnarray}
But  using the reproducing property this last expectation can be written as
\begin{eqnarray}
E_0[g(x) (r(x) -\widehat{r}(x))] &=&E_0\Big[ \big< g ,  K(.,x)\big> \big( r(x) -\widehat{r}(x)  \big)\Big] \\
&=& \Big< g \Big|E_0\big[K(.,x)r(x) \Big] - E_0\big[K(.,x)\big< \widehat{r} ,  K(.,x)\big> \big] \Big> \\
&=& \big< g ,  \mu_1-\mu_0- \Sigma_0 \widehat{r} \big> 
\end{eqnarray}
Since the last result is equal to 0 for any $g\in  V$, the solution necessarily satisfies the equation $\mu_1-\mu_0- \Sigma_0 \widehat{r} =0$, hence proving that $\widehat{r} $ is the detector that maximizes the deflection.

The optimal element in the RKHS which maximises the deflection is therefore also the closest in the mean square sense  to the likelihood ratio. 

\section{Testing for independence}
\label{testindep:ssec} 

Recent works in machine learning and/or signal processing use RKHS mainly for the possibility offered to unfold complicated data in a larger space. 
In classification for example, data nonlinearly separable in the physical space may become linearly separable in a RKHS. Testing independence by embedding data into a RKHS relies in some way on the same idea.

The argument is to use covariance (in the RKHS) to assess independence. It is well known that no correlation between two variables does not imply independence between these variables.
However, an intuitive idea is that no correlation between any nonlinear transformation of two variables may reveal independence. This simple idea was at the root of the celebrated Jutten-Herault algorithm, the first device to
perform blind source separation. In fact it was studied as early as 1959 by R{\'e}nyi \cite{Reny59}. He showed $X$ and $Y$, two variables defined on some common probability space, are independent if and only if 
the so-called maximal correlation $\sup_{f,g} \cov[f(X),g(Y)]$ is equal to zero, where $f,g$ are continuous bounded functions. 

Recently, this result was revisited   by Bach, Gretton  and co-workers through the lense of RKHS \cite{BachJ02,GretHSBS05}. The maximal correlation as used by R{\'e}nyi is too complicated to be practically evaluated, because the space over which the supremum has to be calculated is far too big. The idea is then to look for the supremum in a space in which, firstly the maximum correlation can  be more easily calculated, secondly, in which  R{\'e}nyi's result remains valid. 

Gretton showed in \cite{GretHSBS05} that  $X$ and $Y$ are independent if and only if 
$\sup_{f,g} \cov[f(X),g(Y)]$ is equal to zero, where $f,g$ are living in a RKHS (precisely its unit ball) generated by a universal kernel. Recall that universality is understood here as the denseness of the RKHS into the space of bounded continuous functions. The link between Gretton's result and R{\'e}nyi's is then intuitive, since under universality, any continuous bounded function may be approximated as closely as needed by a function in the RKHS.  
The second requirement above is thus satisfied. The first one is also verified, and this is the magical part of Gretton's approach. The maximal correlation evaluated in the unit ball of the universal RKHS
\begin{eqnarray}
\sup_{f\in {\cal U}_x,g \in {\cal U}_y}  \cov  \big[f(X),g(Y) \big]
\end{eqnarray}
where ${\cal U}=\big\{ f\in  V \backslash \|f\|=1\big\}$, is nothing but the operator norm of the covariance operator $\Sigma_{XY}$. Indeed, by definition, 
\begin{eqnarray}
\| \Sigma_{XY}\| &=& \sup_{g \in {\cal U}_y}  \| \Sigma_{XY} g \|_{ V_x} \quad =\quad \sup_{f\in {\cal U}_x,g \in {\cal U}_y}  \big|\left< f  ,   \Sigma_{XY} g \right>\big| \nonumber  \\
&=& \sup_{f\in {\cal U}_x,g \in {\cal U}_y}  \big|\cov  \big[f(X),g(Y) \big]\big|
\end{eqnarray}
As shown by Bach \cite{BachJ02} and Gretton \cite{GretHSBS05} this quantity can be efficiently evaluated from a finite amount of data. Let $(x_i,y_i)_{i=1,\ldots, N}$  be $N$ independent and identically distributed copies of $X$ and $Y$, then  $\left<  f \left|\right. \Sigma_{XY} g  \right>$ approximated by $\left<  f \left|\right. \widehat{\Sigma}^N_{XY} g  \right>$ is given by 
\begin{eqnarray}
\left<  f \left|\right. \widehat{\Sigma}^N_{XY} g  \right> = \frac{1}{N} \valpha^\top \vK_x \vC_N \vK_y \vbeta
\end{eqnarray}
where the $\alpha$s (respectively $\beta$s) are the coefficients of the expansion of $f$  (respectively $g$) in the subspace of $ V_x$  (respectively $ V_y$) spanned by $K_x(.,x_i), i=1,\ldots ,N $ (respectively $K_y(.,y_i), i=1,\ldots ,N $). The norm of $f$ is given by $\valpha^\top \vK_x \valpha$ and the norm of $g$ by $\vbeta^\top \vK_y \vbeta$.
Therefore the maximal correlation is approximated by  
\begin{eqnarray}
\sup_{\valpha^\top \vK_x \valpha=1,\vbeta^\top \vK_x \vbeta=1 }  \frac{1}{N} \valpha^\top \vK_x \vC_N \vK_y \vbeta=  \frac{1}{N} \big\|\vK_x^{1/2} \vC_N \vK_y^{1/2} \big\|_2
\end{eqnarray}
where $\| A\|_2=\sqrt{\lambda_M(A^\top A )}$ is the usual spectral norm of matrix $A$, that is the square root of its maximal singular value.

The Gram matrices appearing in the estimation of the maximal correlation are of size $N\times N$. Therefore, the calculation of the maximal correlation 
can be very time consuming for large data sets. A simple idea allows to end up with a measure easier to evaluate. It relies on Hilbert-Schmidt norms.
It is known that for any Hilbert-Schmidt operator, the operator norm is lower or equal than the Hilbert-Schmidt norm. Thus Gretton's result remains
true if the operator norm is replaced with he Hilbert-Schmidt norm: $X$ and $Y$ are independent if and only if $\| \Sigma_{XY} \|_{HS}=0$. 

Furthermore, when dealing with the data set $(x_i,y_i), i=1,\ldots ,N$,  the Hilbert-Schmidt norm of the covariance operator may be approximated by the 
 the Hilbert-Schmidt norm of the empirical estimates of the operator. It is then not difficult to show that 
 \begin{eqnarray}
\|  \widehat{\Sigma}^N_{XY} \|^2_{HS} = \frac{1}{N^2} \trace \big( \vC_N \vK_x \vC_N \vK_y )
\end{eqnarray}
For independent and identically distributed data, this estimator satisfies a central limit theorem. It is asymptotically unbiased, and its variance can be explicitly written.
An unbiased version also exists  which in the spirit of $k$-statistics  eliminates the $1/N$ bias \cite{SongSGBB12}. Both versions satisfy some concentration inequalities and a central limit theorem (at the usual rate) \cite{GretHSBS05,SongSGBB12}.

This measure has been called HSIC for Hilbert-Schmidt Independence Criterion. It can obviously be used for testing independence between two samples,
but has also been used for feature selection \cite{SongSGBB12}. Its evaluation requires $O(N^2)$ operations. This complexity can be lowered by using approximation 
to Gram matrices such as the incomplete Cholesky factorization \cite{FineS01} or other types of approximation \cite{SchoS02}. As an alternative, we have designed a recursive implementation of
HSIC which is exact, of course still requiring $O(N^2)$ operations, but which only manipulates vectors. This algorithm can be used on very long data sets in reasonable time. However, for huge data sets
it is still impractical. But its recursive structure allows an efficient approximation leading to the possibility of calculating HSIC on-line or whatever the size of the data set. 
We just give the forms of the second algorithm, the derivation being developed in \cite{Ambl13,AmblM13}. In this procedure, a dictionary is
built recursively, the elements of which are used to evaluate HSIC on-line. New data are included in the dictionary if they are  sufficiently incoherent with the members of the dictionary. This procedure was proposed in a filtering context in \cite{RichBH09} as a simplification of the approximate linear dependence (ALD) criterion proposed in \cite{EngeMM04}. Coherence is measured in the tensor product $ V_x \otimes  V_y$. Thanks to the reproducing property,
the coherence between two data $(x_\alpha,y_\alpha)$ and $(x_n,y_n)$ is evaluated  as $| K_x(x_n,x_\alpha)K_y(y_n,y_\alpha)| $ (assuming the kernels are normalized).
The dictionary ${\cal D}^\mu_n$ contains the index of the data retained up to time $n$, initializing it with ${\cal D}^\mu_1=\{1\}$. It is updated according to 
\begin{eqnarray}
{\cal D}^\mu_n = \left\{
\begin{array}{ll}
{\cal D}^\mu_{n-1} \cup \{n\}  & \mbox{ if }  \sup_{\alpha \in {\cal D}^\mu_{n-1}  } \big| K_x(x_n,x_\alpha)K_y(y_n,y_\alpha) \big| \leq \mu \\
{\cal D}^\mu_{n-1} & \mbox{ otherwise}
\end{array}
\right.
\label{coherence:eq}
\end{eqnarray}
Parameter $\mu$ is in $(0,1]$. If $\mu=1$ all the new data are aggregated to the dictionary and the following algorithm exactly delivers HSIC. If $\mu<1$, the new data are added to the dictionary 
if it is sufficiently incoherent with all the members of the dictionary.  

To describe the algorithm, some more notations are needed.
Let  $\kappa^n_{x}$ be the norm $K_x(x_{n},x_{n})$. Let $\vpi_{n}$ be  a $|{\cal D}_{n}|$  dimensional vector  whose entries 
 $\pi_{n}(\alpha)$  count the number of times the element $\alpha$ of the dictionary has been  chosen by the rule (\ref{coherence:eq}).   $\vpi_{n}$ is  initialised by $\pi_1(1)=1$. Let  $ \vk_{x}^n$ contain the $K_x(x_n,x_\alpha), \forall \alpha\in {\cal D}_{n-1}^\mu$. Vectors $\vv^n$ appearing in the algorithm below  are initialised as $\vv^0=1$. Finally, $\circ $ denotes the Hadamard product, {\it i.e. } the entrywise product for vectors or matrices.   Equipped with all this, the algorithm is the following:

\noindent\framebox{ 
\parbox[t]{.98\linewidth} { 
\underline{Sparse HSIC~:}
\begin{eqnarray}
\widehat{H}_n^{\mu}\!\!\!\!\!  &=&\!\!\!\!\! \|M_{yx}^n \|^2 + \|    m_x^n\|^2\|    m_y^n\|^2 - 2c^n_{yx} \\
 \| M_{yx}^n \|^2  \!\!\!\!\! &=&  \!\!\!\!\!  \frac{(n-1)^2}{n^2} \| M_{yx}^{n-1} \|^2 +\frac{2}{n^2}\vpi^\top_{n-1} \vk_{x}^n\circ\vk_{y}^n +\frac{\kappa^n_{x}\kappa^n_{y}}{n^2}\\
  \|    m_x^n\|^2  \!\!\!\!\!  &=&  \!\!\!\!\!  \frac{(n-1)^2}{n^2} \|    m_x^{n-1}\|^2+ \frac{2}{n^2}\vpi^\top_{n-1} \vk_{x}^n
  +\frac{\kappa^n_{x}}{n^2}  \\
  c^n_{yx}  \!\!\!\!\!  &=& \! \!\!\!\!  \frac{1}{n^3} \vpi^\top_n \vv_x^n \circ \vv^n_y   \mbox{ where:}
  \end{eqnarray}
{\bf 1.} Si ${\cal D}_n = {\cal D}_{n-1} \cup \{ n\}  $
\begin{eqnarray}
\vv_{x}^{n} =
\left( \begin{array}{c}
\vv_{x }^{n-1} +\vk_{x}^n \\
\vpi^\top_{n-1} \vk_{x}^n+\kappa^n_{x}
\end{array}\right)  \quad  \mbox{ and }\quad 
\vpi_{n}  = \left( \begin{array}{c}  \vpi_{n-1} \\ 1 \end{array}\right) 
\end{eqnarray}
{\bf 2.} Si ${\cal D}_n = {\cal D}_{n-1} \quad :  \quad a= \arg \max_{\alpha \in {\cal D}_{n-1} }  \big| K_x(x_n,x_\alpha)K_y(y_n,y_\alpha) \big|$, 
\begin{eqnarray}
\vv_{x}^{n} = \vv_{x }^{n-1} +\vk_{x}^n  \quad  \mbox{ and }\quad 
\vpi_{n}  =   \vpi_{n-1} + \delta_{a \alpha}
\end{eqnarray}  }}

Sparse HSIC is illustrated on the following example, taken from \cite{FukuGSS07}. Consider the couple of independent variables $(X,Y)$ where 
$X$ is uniformly distributed on $[-a,a]$ and $Y$ is uniformly distibuted on $[-c,-b]\cup [b,c]$,   $a,b,c$ being real positive constants. We choose $a,b,c$ to ensure that $X$ and $Y$ have the same variance.
Let $Z_\theta$ the random vector obtained by rotating vector $(X,Y)$ by an angle $\theta$. The components of $Z_\theta$ are uncorrelated whatever $\theta$ but are independent if and only if $\theta= \pi/2 \times \Z$.  $N$ i.i.d. samples of  $Z_\theta$ are simulated.  For   $\theta$ varying in $[0,\pi/2]$   the final value of sparse HSIC $\widehat{H}_N^{\mu}$ is plotted   in figure (\ref{HSICtheta:fig}). The right plot displays $\widehat{H}_N^{\mu}$ for the five values $\mu=0.8, 0.85, 0.9, 0.95, 1$. The kernel used is the Gaussian kernel $\exp(-\|\vx-\vy\|^2)$. As can be seen, the distorsion due to sparsity is very small. Interestingly, the size of the dictionary (left plot) is very small as soon as $\mu<0.95$ in this example, thus implying a dramatic decrease in computation time and memory load. Moreover,  the form of the algorithm can be simply turned into an adaptive estimator by changing decreasing step-sizes into constant step sizes. This allows to track changes into the dependence structure
of two variables \cite{Ambl13}.
\begin{figure}[htbp]
\centering
\includegraphics{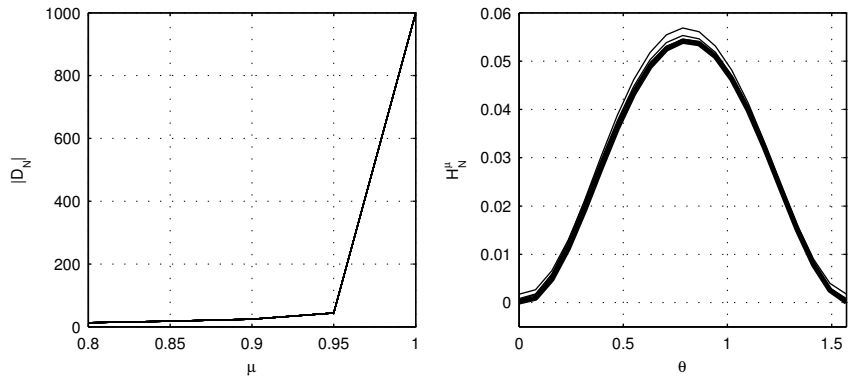}
\caption{Right plot:  HSIC as a function of $\theta$ in the example in the text. The thick line corresponds to $\mu=1$ or equivalently to a non sparse evaluation of HSIC. The other lines correspond to $\mu=0.8, 0.85, 0.9, 0.95$. Left plot:  Size  of the dictionary achieved after $N$ iterations as a function of $mu$. }
\label{HSICtheta:fig}
\end{figure}
The size of the dictionary is probably linked to the speed of decrease of the spectrum of the Gram matrix. In general, when the Gram matrix can be 
nicely approximated by a low rank matrix, we observed that the size of the dictionary obtained is small. A general study of the approximation of the Gram matrix by the Gram matrix issued from the coherence dictionary remains to be done. Note however that some information are given for the related ALD criterion  in \cite{SunGS12} (see also \cite{Bach13} for related materials on low rank approximations in a regression context).
We will meet again the coherence-based sparsification procedure in the application of RKHS to on-line  nonlinear filtering.

\noindent {\bf Maximum Mean Discrepancy (MMD).} Another approach to testing independence using embeddings into RKHS relies on 
the maximum mean discrepancy measures or MMD \cite{GretBRSS12}. MMD measures a disparity between two probability measures 
$P$ and $Q$ . Let  $X$ and $Y$ be two  random variables taking values in a space $\X$ and respectively distributed according to $P$ and $Q$.
Let ${\cal F}$ be a function space on ${\cal X}$. MMD is defined as
\begin{eqnarray}
MMD(P,Q; {\cal F}) =\sup_{f\in {\cal F}} \Big( E_P[f(X)] -E_Q[f(Y)] \Big)
\end{eqnarray}
If the function space is rich enough, a null MMD implies equality between $P$ and $Q$. As previously, it can be shown that if ${\cal F}$ is restricted to be the unit ball of a RKHS associated to a universal kernel, this result is true, $MMD(P,Q; {\cal F})=0 \Leftrightarrow P=Q$. 
This can be used to test for independence by measuring the maximum mean discrepancy between a joint probability and the product of its marginals. Furthermore, and like HSIC, there is a nice way of estimating MMD in a RKHS. In fact, thanks to Schwartz inequality, MMD can be expressed as
\begin{eqnarray}
MMD(P,Q;  V)^2 &=&\sup_{f\in  V, \|f\|\leq 1} \Big( E_P[f(X)] -E_Q[f(Y)] \Big)^2 \nonumber\\
&=&\sup_{f\in  V, \|f\|\leq 1} \Big( \big< \mu_P-\mu_Q ,   f \big> \Big)^2 \nonumber\\
&=& \big\|\mu_P-\mu_Q \big\|^2_ V
\end{eqnarray}
where $\mu_P$, $\mu_Q$ are the mean elements of respectivelyy $P$ and $Q$ in $ V$. When data $(x_i,y_i), i=1,\ldots, N$ are 
to be tested for independence, empirical estimators of MMD are very easy to develop and implement. Furthermore, asymptotic results for their efficiency exist that allows a complete development of independence testing \cite{GretBRSS12}.

\section{Conditional independence measures}

Remarkably, the measure of independence presented in the preceding section has an extension which allows to quantify conditional independence.
Conditional independence is a fundamental concept in different problems such as graphical modeling or dimension reduction. For example, graphical Markov properties   of Gaussian random vectors are revealed by conditional independence relations between these  components \cite{Laur96,Whit89}. Since measuring independence was found to be elegantly and efficiently done by embedding measures into RKHS, it was natural to work on the extension to conditional independence. It turns out that conditional independence can also be assessed in RKHS. \\

\noindent {\bf Some recalls \cite{Laur96,Whit89}.}
Let $X,Y,Z$ be three real  random vectors of arbitrary finite dimensions, and $\hat{X}(Z)$ and $\hat{Y}(Z)$ the best linear MMSE (minimum mean square error) estimates of $X$ and $Y$ based on $Z$. It is well-known that these are given by $\hat{X}(Z) =\Sigma_{XZ}\Sigma_{ZZ}^{-1}Z$ and $\hat{Y}(Z) =\Sigma_{YZ}\Sigma_{ZZ}^{-1}Z$,  where $\Sigma_{AB}:=\cov \left[A,B \right] $
stands for the covariance matrix of vectors $A$ and $B$.
The errors $X-\hat{X}(Z)$ and $Y-\hat{Y}(Z)$ are orthogonal to the linear subspace  generated by $Z$, and this can be used to show the well-known relations
\begin{eqnarray}
\Sigma_{XX|Z}&:=& \cov \left[X-\hat{X}(Z),X-\hat{X}(Z) \right]   \nonumber \\
&=& \Sigma_{XX} - \Sigma_{XZ} \Sigma_{ZZ}^{-1} \Sigma_{ZX} \\
\Sigma_{XY|Z}&:=& \cov \left[X-\hat{X}(Z),Y-\hat{Y}(Z) \right]  \nonumber\\
&=& \Sigma_{XY} - \Sigma_{XZ} \Sigma_{ZZ}^{-1} \Sigma_{ZY}
\end{eqnarray}
$\Sigma_{XX|Z}$ is the covariance of the error in estimating $X$ linearly from $Z$. It is also called the partial covariance and it is equal to the conditional covariance in the Gaussian case. The second term measures the correlation remaining between $X$ and $Y$ once the effect of their possibly common observed cause $Z$ has been linearly  removed from them. $\Sigma_{XY|Z}$ is called the partial cross-covariance matrix and is equal to the conditional cross-covariance in the Gaussian case ({\it i.e.} $X$, $Y$ and $Z$ are jointly Gaussian.) 

Therefore, in the Gaussian case, conditional independence can be assessed using linear prediction and the partial cross-covariance matrix. This has led to extensive development in the field of graphical modeling. \\

\noindent {\bf Using kernels.}
The approach above can be extended to assess conditional independence 
for nonGaussian variables by using embeddings in RKHS. 
The extension relies on the notion of conditional cross-covariance operators, a natural extension of the covariance operators. Having in mind that cross-covariance operators suffices to assess independence (as cross-covariance does in the finite dimensional Gaussian case), the idea is consider 
\begin{eqnarray}
\Sigma_{XY|Z} := \Sigma_{XY} - \Sigma_{XZ} \Sigma_{ZZ}^{-1} \Sigma_{ZY} 
\end{eqnarray}
as a potential candidate to assess conditional independence the operator.
The first remark concerns the existence of this operator. 

$\Sigma_{ZZ}$ is an operator from $V_z$ to  $V_z$. Let ${\cal N}( \Sigma_{ZZ})$ and ${\cal R}(\Sigma_{ZZ})$ be respectively its null space and its range. The operator is supposed to be  invertible on its range and the inverse is abusively denoted as $\Sigma_{ZZ}^{-1}$.  The inverse exits in full generality if and only if 
${\cal N}( \Sigma_{ZZ})=\{0\}$ and ${\cal R}(\Sigma_{ZZ})=V_z$, corresponding to injectivity and surjectivity. Thus in the sequel, when dealing with ensemble operators, covariance operator will be supposed invertible. 
To avoid working with inverses,  normalized covariance operators $V_{XY}$ should be considered. They are defined using $\Sigma_{XY}= \Sigma_{XX}^{1/2} V_{XY} \Sigma_{YY}^{1/2}$ \cite{Bake73}.
The conditional covariance operator then reads
\begin{eqnarray}
\Sigma_{XY|Z} := \Sigma_{XY} - \Sigma_{XX}^{1/2} V_{XZ} V_{ZY}\Sigma_{YY}^{1/2}
\end{eqnarray}
and the normalized version is given by 
\begin{eqnarray}
V_{XY|Z} := V_{XY} -  V_{XZ} V_{ZY}
\end{eqnarray}
This last definition is the only theoretically well grounded, since the $V$ operators are shown to exist in \cite{Bake73} under some assumptions, but without relying  on the invertibility of the covariance operators. However for practical purposes, we will use the other form, knowing that the existence of the inverse is subject to caution. 

Several theorems show the meaning of the conditional covariance operators, and how we can assess conditional independence with them. They are all mainly due to  Fukumizu,  Bach and  Jordan   \cite{FukuBJ04,FukuBJ09}. 
The first result links conditional expectation to covariance and cross-covariance operators. 
\begin{theorem}
\label{indepcondeqm:th}
For all $g \in V_y$,
\begin{eqnarray}
\big< g ,  \Sigma_{YY|X} g \big> = \inf_{f\in V_x} E
\Big[  \big( (g(Y)-E[g(Y)])  -  (f(X)-E[f(X)]\big)^2\Big]
\end{eqnarray}
If furthermore the direct sum $V_x+\R$ is dense in $L^2(P_X)$, then 
\begin{eqnarray}
\big< g ,  \Sigma_{YY|X} g \big> = E_X\left[\var [ g(Y) \big| X]    \right]
\end{eqnarray}
\end{theorem}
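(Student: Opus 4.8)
The plan is to read both assertions as statements about orthogonal projection: the first lives in the RKHS $V_x$, the second in $L^2(P_X)$, and the bridge between them is supplied by the first part. Throughout I assume the standing hypotheses $E\|\check X\|^2<\infty$, $E\|\check Y\|^2<\infty$ and measurability of the kernels, so that $m_X$, $m_Y$, $\Sigma_{XX}$, $\Sigma_{YY}$, $\Sigma_{XY}$ all exist and $f(X),g(Y)\in L^2$ for $f\in V_x$, $g\in V_y$. For the first identity I would expand the functional being minimised: writing $\tilde g=g(Y)-E[g(Y)]$ and $\tilde f=f(X)-E[f(X)]$, both centred, and using $E[\tilde g^2]=\big< g,\Sigma_{YY}g\big>$, $E[\tilde f^2]=\big< f,\Sigma_{XX}f\big>$, and $E[\tilde g\tilde f]=\cov[g(Y),f(X)]=\big< g,\Sigma_{YX}f\big>=\big< \Sigma_{XY}g,f\big>$, one gets
\[
E[(\tilde g-\tilde f)^2]=\big< f,\Sigma_{XX}f\big>-2\big< \Sigma_{XY}g,f\big>+\big< g,\Sigma_{YY}g\big>,
\]
a quadratic in $f\in V_x$. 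I would then complete the square via Baker's factorisation $\Sigma_{XY}=\Sigma_{XX}^{1/2}V_{XY}\Sigma_{YY}^{1/2}$: putting $w=V_{XY}\Sigma_{YY}^{1/2}g$, so that $\Sigma_{XY}g=\Sigma_{XX}^{1/2}w$ and $w\in\overline{\mathcal R(\Sigma_{XX}^{1/2})}$, the functional equals $\|\Sigma_{XX}^{1/2}f-w\|^2+\big(\big< g,\Sigma_{YY}g\big>-\|w\|^2\big)$. As $f$ runs over $V_x$, $\Sigma_{XX}^{1/2}f$ runs over the dense subset $\mathcal R(\Sigma_{XX}^{1/2})$ of $\overline{\mathcal R(\Sigma_{XX}^{1/2})}$, so the first term has infimum $0$; and $\|w\|^2=\big< \Sigma_{YY}^{1/2}g,V_{YX}V_{XY}\Sigma_{YY}^{1/2}g\big>=\big< g,(\Sigma_{YY}-\Sigma_{YY|X})g\big>$ by the definition $\Sigma_{YY|X}=\Sigma_{YY}-\Sigma_{YY}^{1/2}V_{YX}V_{XY}\Sigma_{YY}^{1/2}=\Sigma_{YY}-\Sigma_{YX}\Sigma_{XX}^{-1}\Sigma_{XY}$. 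Hence the infimum is $\big< g,\Sigma_{YY|X}g\big>$, which is the first claim.

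For the second identity I would rephrase the infimum geometrically. Let $\mathcal G_X=\{f(X)-E[f(X)]\mid f\in V_x\}$, a linear subspace of the centred functions $L^2_0(P_X)\subset L^2$. By the first part,
\[
\big< g,\Sigma_{YY|X}g\big>=\inf_{f\in V_x}\|\tilde g-\tilde f\|_{L^2}^2=\operatorname{dist}_{L^2}(\tilde g,\mathcal G_X)^2=\operatorname{dist}_{L^2}(\tilde g,\overline{\mathcal G_X})^2,
\]
the last equality because the distance to a set equals the distance to its closure. The hypothesis that $V_x+\reals$ is dense in $L^2(P_X)$ forces $\overline{\mathcal G_X}=L^2_0(P_X)$. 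Since conditional expectation is the orthogonal projection onto $L^2(\sigma(X))$ and $\tilde g$ is already centred, the projection of $\tilde g$ onto $L^2_0(P_X)$ is $E[g(Y)\mid X]-E[g(Y)]$, so the squared distance is $E\big[(g(Y)-E[g(Y)\mid X])^2\big]=E_X\big[\var[g(Y)\mid X]\big]$; equivalently this is the first term of the law of total variance $\var[g(Y)]=E_X[\var[g(Y)\mid X]]+\var_X[E[g(Y)\mid X]]$. This establishes the second claim.

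The main obstacle is carrying out the first part honestly when $\Sigma_{XX}$ is not invertible: the symbol $\Sigma_{XX}^{-1}$ in $\Sigma_{YY|X}$ is only the inverse on the range, the candidate minimiser $\Sigma_{XX}^{-1}\Sigma_{XY}g$ need not lie in $V_x$, and so the infimum over $f\in V_x$ is in general not attained. This is precisely why I would route the argument through Baker's factorisation together with the density of $\mathcal R(\Sigma_{XX}^{1/2})$ in its closure, rather than a naive completion of the square that divides by $\Sigma_{XX}$. The remaining ingredients — the operator-to-moment dictionary ($\big< g,\Sigma_{YY}g\big>=\var[g(Y)]$, etc.), the elementary fact that distance to a set equals distance to its closure, and the projection characterisation of conditional expectation — are standard and routine to fill in.
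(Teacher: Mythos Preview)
Your proof is correct and, in fact, more complete than the paper's own argument, which the authors themselves describe as ``informal'' and ``needing hypothesis not present in the statement.''

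For the first identity, the paper proceeds by a first-order variational argument: it computes $\mathcal{E}_g(f_0+f)-\mathcal{E}_g(f_0)$, observes that the stationarity condition is $\Sigma_{XX}f_0=\Sigma_{XY}g$, and then plugs in $f_0=\Sigma_{XX}^{-1}\Sigma_{XY}g$ assuming this exists in $V_x$. You instead complete the square via Baker's factorisation $\Sigma_{XY}=\Sigma_{XX}^{1/2}V_{XY}\Sigma_{YY}^{1/2}$, which lets you identify the infimum as $\langle g,\Sigma_{YY}g\rangle-\|w\|^2$ without ever requiring the minimiser to lie in $V_x$; density of $\mathcal{R}(\Sigma_{XX}^{1/2})$ in its closure does the work. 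This is exactly the right way to handle the obstacle you flag at the end, and it yields the result under the hypotheses actually stated.

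For the second identity, the paper does not give a self-contained argument at all: it points forward to the later computation $\langle f,\Sigma_{XY|Z}g\rangle=E_Z[\operatorname{Cov}[f(X),g(Y)\mid Z]]$, which itself relies on the additional assumption that $E[g(\cdot)\mid Z]\in V_z$. Your route---recognising the infimum as a squared $L^2$-distance to $\mathcal{G}_X$, passing to the closure, and invoking the projection characterisation of conditional expectation---uses only the density hypothesis $V_x+\reals$ dense in $L^2(P_X)$ as stated, and is both cleaner and more honest about what is actually being assumed.
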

The density assumption means than any  random variable of $L^2(P_X)$   can be approximated as closely as desired by a function of $V_x$ plus a real. Adding the real is necessary since very often, constants do not belong to the RKHS under study (Remind that  $L^2(P_X)$ is  the space of square integrable functions with respect to $P_X$, or otherwise stated, the space of functions of  $X$ with finite expected squared norm $E[\|f(X)\|^2]<+\infty$.) The result of the theorem is an extension of what was recalled above, but stated in RKHS. The operator $\Sigma_{YY|X}$ measures the power of the error made in approximating a function of a random variable embedded in a RKHS by a function of another random variable embedded in its own RKHS. The second result generalizes the Gaussian case since under the assumption of density the operator evaluates a conditional variance. An informal proof is given, needing hypothesis not present in the statement. A full proof may be found in \cite[Prop. 2 and 3]{FukuBJ09}.
\begin{proof}
Let ${\cal E}_g(f) = E\Big[  \big( (g(Y)-E[g(Y)])  -  (f(X)-E[f(X)]\big)^2\Big]$. Then $f_0$ provides the infimum if 
${\cal E}_g(f_0 +f )-{\cal E}_g(f_0 ) \geq 0$ for all $f\in V_x$. But we have
\begin{eqnarray}
{\cal E}_g(f_0 +f )-{\cal E}_g(f_0 ) &=& \big< \Sigma_{XX}f , f \big>+2\big< \Sigma_{XX}f_0 -\Sigma_{XY} g ,  f\big>
\end{eqnarray}
Obviously, $\Sigma_{XX}f_0 -\Sigma_{XY} g =0$ satisfies the condition. It is also necessary. Indeed, suppose 
$\Sigma_{XX}f_0 -\Sigma_{XY} g \not = 0$. $\Sigma_{XX}$ is auto-ajoint and thus only has positive or null eigen values. Thus $\Sigma_{XX}f=-f$ has no solution and the null space of $\Sigma_{XX}+I$ is reduced to 0. Thus $\Sigma_{XX}+I$ is invertible. Therefore there is a non zero $f$ such that $\Sigma_{XX}f+f=-2(\Sigma_{XX}f_0 -\Sigma_{XY} g)$, and this $f$ satisfies
${\cal E}_g(f_0 +f )-{\cal E}_g(f_0 ) =-\big< f , f \big><0$, giving a contradiction. Thus, this gives the result. Note we use the fact that $\Sigma_{XX}$ is invertible, at least on its range. The fact that $\big< g ,  \Sigma_{YY|X} g \big> = E_X\left[\var [ g(Y) \big| X]    \right]$ is shown hereafter as a particular case of conditional crosscovariance  operator.
\end{proof}

Since the conditional operator is linked to optimal estimation (in the mean square sense) of a function $g(Y)$ from a transformation of $X$, 
 $\Sigma_{XX} E[ g(. )  \big| X] = \Sigma_{XY} g(.)$ should be a solution. However, this requires that the conditional expectation $ E[ g(. )  \big| X]  $ lies in $ V_x$, a fact that is absolutely not guaranteed. 
If it is supposed so, the statement and results are more direct. 
In that case,  for any $g\in V_y$,
\begin{eqnarray}
\Sigma_{XX} E[ g(. )\big| X] = \Sigma_{XY} g(.)
\end{eqnarray}
 this provides a means of calculating the conditional mean in a RKHS if the covariance is invertible. 

The following set of relations highlights the effect of the conditional covariance operators on function.
\begin{eqnarray}
\big< f , \Sigma_{XY|Z}g \big>&=& \cov[ f(X),g(Y)] -\big< f ,  \Sigma_{XZ} \Sigma_{ZZ}^{-1} \Sigma_{ZY} g\big> \nonumber\\
&=& \cov[ f(X),g(Y)] -\big< \Sigma_{ZX} f ,    \Sigma_{ZZ}^{-1} \Sigma_{ZY} g\big> \nonumber\\
&=& \cov[ f(X),g(Y)] -\big< \Sigma_{ZX} f ,    E[g(.) | Z] \big> \nonumber\\
&=&\cov[ f(X),g(Y)] - \big< \Sigma_{ZZ} E[f(.) | Z] ,    E[g(.) | Z] \big> \nonumber\\
&=&\cov[ f(X),g(Y)] - \cov_Z\big[ E[f(.) | Z] ,   E[g(.) | Z] \big] \nonumber\\
&=& E[ f(X) g(Y)]  - E_Z\big[ E[f(.) | Z]     E[g(.) | Z] \big]\nonumber\\
&=&E_Z \Big[  \cov[ f(X) , g(Y)| Z]  \Big]
\end{eqnarray}
The next question concerns whether conditional independence can be measured using the conditional covariance operator or not? The previous  result  and the first one in the following theorem show  that a zero conditional covariance operator  is not equivalent to conditional independence, but equivalent to a weaker form. The second result in the theorem below shows how to slightly modify the covariance operator to obtain the equivalence. This theorem is also from Fukumizu and his colleagues \cite{FukuBJ04}.
We suppose in the following that all the kernels used  are characteristic, and that the conditional mean involved belongs to the proper RKHS. 
  \begin{theorem}
  \label{condindep:th}
  Let $X,Y,Z$ be three random vectors embedded in corresponding RKHS.
  Then we have
  \begin{enumerate}
\item $\Sigma_{XY|Z}=0  \Longleftrightarrow  P_{XY} = E_Z[ P_{X|Z}\otimes P_{Y|Z} ]$
\item $\Sigma_{(XZ) Y | Z}=0  \Longleftrightarrow X \perp Y | Z$.
\end{enumerate}
\end{theorem}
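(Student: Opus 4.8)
The plan is to establish both equivalences by translating the vanishing of the relevant (conditional) cross-covariance operator into a statement about vanishing conditional covariances, and then invoking characteristicness of the kernels to upgrade from ``uncorrelated under the measure'' to ``independent''. For part~1, I would start from the chain of identities derived immediately before the theorem, namely $\big< f, \Sigma_{XY|Z} g \big> = E_Z\big[ \cov[f(X),g(Y) \mid Z] \big]$, valid for all $f \in V_x$, $g \in V_y$. Thus $\Sigma_{XY|Z}=0$ is equivalent to $E_Z\big[ E[f(X)g(Y)\mid Z] - E[f(X)\mid Z]E[g(Y)\mid Z]\big]=0$ for all such $f,g$, which rearranges to $E[f(X)g(Y)] = E_Z\big[E[f(X)\mid Z]\,E[g(Y)\mid Z]\big]$. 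The right-hand side is precisely $\iint f(x)g(y)\, d\big(E_Z[P_{X|Z}\otimes P_{Y|Z}]\big)(x,y)$, while the left-hand side is $\iint f(x)g(y)\, dP_{XY}(x,y)$. Since the kernels on $X$ and $Y$ are characteristic, the functions $f\otimes g$ (equivalently, the mean elements $K_x(\cdot,x)\otimes K_y(\cdot,y)$) are rich enough to separate measures on $\X\times\Y$, so equality of these integrals for all $f,g$ forces $P_{XY} = E_Z[P_{X|Z}\otimes P_{Y|Z}]$. The converse direction is immediate by reversing the computation.

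For part~2, the key observation is that $X \perp Y \mid Z$ is equivalent to $(X,Z) \perp Y \mid Z$: conditioning on $Z$ already fixes the $Z$-component, so adjoining $Z$ to $X$ adds no information once $Z$ is given. More precisely, $P_{(X,Z)Y|Z} = E_Z\big[P_{(X,Z)|Z}\otimes P_{Y|Z}\big]$ if and only if, disintegrating, $P_{XY|Z=z} = P_{X|Z=z}\otimes P_{Y|Z=z}$ for $P_Z$-almost every $z$, which is exactly the definition of $X\perp Y\mid Z$. (The point is that $P_{(X,Z)|Z=z}$ is $P_{X|Z=z}$ times a Dirac mass at $z$, and the Dirac factor matches trivially on both sides.) Then part~1, applied with $X$ replaced by the pair $(X,Z)$ embedded in the product RKHS $V_x\otimes V_z$ (whose kernel is the product kernel by the tensor-product theorem quoted earlier, and which is characteristic whenever $K_x$ and $K_z$ are), gives $\Sigma_{(XZ)Y|Z}=0 \iff P_{(X,Z)Y} = E_Z\big[P_{(X,Z)|Z}\otimes P_{Y|Z}\big]$, and combining with the equivalence just noted yields $\Sigma_{(XZ)Y|Z}=0 \iff X\perp Y\mid Z$.

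The main obstacle, and the step deserving the most care, is the passage from ``$E[f(X)g(Y)] = E_Z[E[f(X)\mid Z]E[g(Y)\mid Z]]$ for all $f\in V_x$, $g\in V_y$'' to the measure-theoretic identity $P_{XY}=E_Z[P_{X|Z}\otimes P_{Y|Z}]$. This requires that the linear span of product functions $\{f\otimes g : f\in V_x,\, g\in V_y\}$ be dense enough in, say, $C_b(\X\times\Y)$ (or that the product kernel be characteristic) so that agreement of the two measures against all test functions of product form implies the measures coincide; this is where the characteristic hypothesis is genuinely used and where one must be slightly careful about which class of measures is being separated. A secondary subtlety is the standing assumption — flagged in the text before the theorem — that the relevant conditional expectations $E[g(\cdot)\mid Z]$ actually lie in the appropriate RKHS, so that the operator identities $\Sigma_{ZZ}^{-1}\Sigma_{ZY}g = E[g(\cdot)\mid Z]$ used in the pre-theorem computation are legitimate; I would state this explicitly as a hypothesis (as the paper does) rather than attempt to prove it. Everything else is bookkeeping with the definitions of the operators and the reproducing property.
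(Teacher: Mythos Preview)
Your proposal is correct and follows essentially the same approach as the paper. For part~1 you reproduce the paper's argument exactly; for part~2 the paper also reduces to part~1 applied to $(X,Z)$ in place of $X$, but verifies the equivalence ``$P_{(XZ)Y}=E_Z[P_{(XZ)|Z}\otimes P_{Y|Z}] \iff X\perp Y\mid Z$'' by computing directly with indicator functions $\mathbf{1}_{A\times C}(X,Z)\mathbf{1}_B(Y)$ and varying $C$, which is just the concrete version of your disintegration/Dirac-mass observation.
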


  \begin{proof}
  
  {\bf First assertion.}
  We have seen that 
  \begin{eqnarray}
\big< f , \Sigma_{XY|Z}g \big> &=& E[ f(X) g(Y)]  - E_Z\big[ E[f(.) | Z]     E[g(.) | Z] \big]
\end{eqnarray}
which can be written as
\begin{eqnarray}
\lefteqn{\big< f , \Sigma_{XY|Z}g \big> =}\\
&& \int f(x)g(y) \Big(P_{XY} (dx,dy)-\int P_Z(dz)  P_{X|Z}(dx,z)P_{Y|Z}(dy,z)\Big)\nonumber
\end{eqnarray}
Thus obviously, if for all $A$ and $B$ in the adequate sigma algebra
\begin{eqnarray}
P_{XY} (A,B) = \int P_Z(dz)  P_{X|Z}(A,z)P_{Y|Z}(B,z)
\end{eqnarray}
we have $\big< f , \Sigma_{XY|Z}g \big> = 0$ for all $f$ and $g$ leading necessarily to $\Sigma_{XY|Z}=0$. 
Now if the covariance operator is zero then we have for all $f$ and $g$ $E_{P_{XY}}[f(X)g(Y)]=
E_Q\big[ f(X)g(Y)\big]$ where $Q= E_Z[ P_{X|Z}\otimes P_{Y|Z} ]$. Working in the tensorial product $V_x\otimes V_y$ where we have assumed $K_x K_y$ as a characteristic kernel allows to conclude that $Q=
  P_{XY}$. 
  
  {\bf Second assertion.} Let $A,B,C$ be elements of the sigma algebra related to  $X,Y$ and $Z$ respectively. Let 
 $ {\bf 1}_{A}$ the characteristic function of set $A$. 
Then we have
\begin{eqnarray}
\lefteqn{P_{XZY}(A,C,B) - E_Z[ P_{XZ|Z}(A,C) P_{Y|Z}(B)]}\nonumber\\
&=&E[ {\bf 1}_{A\times C}(X,Z) {\bf 1}_{B}(Y)]  - E_Z\big[ E[{\bf 1}_{A\times C}(X,Z)| Z]     E[{\bf 1}_{B}(Y) | Z] \big]\nonumber\\
&=& E_Z\Big[{\bf 1}_{C}(Z) \Big( E[ {\bf 1}_{A}(X) {\bf 1}_{B}(Y)| Z] - E[{\bf 1}_{A}(X)| Z]     E[{\bf 1}_{B}(Y) | Z]\Big) \Big]\nonumber\\
&=&\int_C P_Z(dz) \Big(  P_{X,Y|Z}(A,B,z) - P_{X|Z}(A,z)P_{X|Z}(B,z) \Big)
\end{eqnarray} 
If $\Sigma_{(XZ) Y | Z}=0$ then the first assertion implies
  $P_{XZY}=E_Z[ P_{XZ|Z}\otimes P_{Y|Z}]$ and the previous integral is equal to zero for any $C$, 
  which in turn implies that $P_{X,Y|Z}(A,B,z) - P_{X|Z}(A,z)P_{X|Z}(B,z)$ almost everywhere ($P_Z$) for any $A,B$. But this is precisely the defintion of conditional independence. The converse is evident.  
    \end{proof}
  
  The conclusion of this theorem is that the variables $X$ and $Y$ has to be extended using $Z$ {\it prior }
  conditioning.  Assessing conditional  independence relies on  the conditional covariance operators (extended as above.) However, as done for independence testing, a measure is needed. The Hilbert-Schmidt norm 
  $\|    \Sigma_{(XZ) (YZ) | Z}\|^2$ is used for that purpose. \\

\noindent {\bf Estimation.}  The estimators of the conditional measures have representations in terms of Gram matrices. In the following, the indication of the RKHS in the inner product  is suppressed for the sake of readability. For $N$ identically distributed observations $(x_i,y_i,z_i)$  the application of the empirical covariance estimator
to a function is given by 
\begin{eqnarray}
\widehat{\Sigma}_{XY} f = \frac{1}{N}\sum_j \tilde{K}_x(., x_j)\big<  \tilde{K}_y(., y_j),  f\big>
\end{eqnarray}
where the tildas mean  the kernel are centered. 
The effect of the regularized  inverse of this operator on a function $\sum_i \beta_i K_x(.,x_i)$ is to produce 
the function $\sum_i \alpha_i K_x(.,x_i)$ with $\valpha  = N  (\widetilde{\vK}_x +N\lambda I) ^{-1} \vbeta:=N\widetilde{\vK}_{r,x}^{-1}\vbeta$.
%
The inner product   $\big< f , \Sigma_{XY|Z} g \big>$ for $f(.)=\sum_i \alpha_i \tilde{K}_x(.,x_i)$ and $g(.)=\sum_i \beta_i  \tilde{K}_y(.,y_i)$ is evaluated for $N$ triple $x_i,y_i,z_i$ identically distributed. The result for the covariance operator is known from the first section
\begin{eqnarray}
\big<   f , \widehat{\Sigma}_{XY} g\big> =\frac{1}{N} \vbeta^\top \widetilde{\vK}_y \widetilde{\vK}_x \valpha
\end{eqnarray}
Then the remaining term  in $ \widehat{\Sigma}_{XY|Z} = \widehat{\Sigma}_{XY} -\widehat{\Sigma}_{XZ}\widehat{\Sigma}_{ZZ}^{-1}\widehat{\Sigma}_{ZY} $ leads to 
\begin{eqnarray}
\lefteqn{ \big<   f , \widehat{\Sigma}_{XZ}\widehat{\Sigma}_{ZZ}^{-1}\widehat{\Sigma}_{ZY} g\big> =\sum_{i,j} \alpha_i\beta_j\big<    \tilde{K}_x(.,x_i) , \widehat{\Sigma}_{XZ}\widehat{\Sigma}_{ZZ}^{-1}\widehat{\Sigma}_{ZY}  \tilde{K}_y(.,y_j)\big>}  \nonumber \\
&=&\frac{1}{N^2}\sum_{i,j,k,l} \alpha_i\beta_j
(\widetilde{\vK}_{y})_{kj} (\widetilde{\vK}_{x})_{li}
\big<    \tilde{K}_z(.,z_l) , \widehat{\Sigma}_{ZZ}^{-1}  \tilde{K}_z(.,z_k)\big> \nonumber\\
&=&\frac{1}{N}\vbeta^\top   \widetilde{\vK}_{y} \widetilde{\vK}_{r,z}^{-1} \widetilde{\vK}_{z}\widetilde{\vK}_{x} \valpha
\end{eqnarray}
Thus the final result is then 
\begin{eqnarray}
\big<   f , \widehat{\Sigma}_{XY|Z} g\big> =\frac{1}{N} \vbeta^\top\Big( \widetilde{\vK}_y \widetilde{\vK}_x 
- \widetilde{\vK}_{y} \widetilde{\vK}_{r,z}^{-1} \widetilde{\vK}_{z}\widetilde{\vK}_{x} \Big)\valpha
\end{eqnarray}

\noindent {\bf Hilbert-Schmidt norms.}
Practically, a measure using the cross-covariance is preferable. Like for independence testing, a nice measure is provided by measuring the norm of the operator. For the Hilbert-Schmidt norm, we have
\begin{eqnarray}
\lefteqn{\Big\|  \widehat{\Sigma}_{XY|Z}    \Big\|_{HS}^2 =  \sum_i \big< \widehat{\Sigma}_{XY|Z}\varphi_i ,  \widehat{\Sigma}_{XY|Z}\varphi_i \big>}   \\
&=& \Big\|  \widehat{\Sigma}_{XY}  \Big\|_{HS}^2  + \Big\|   \widehat{\Sigma_{XZ}}  \widehat{\Sigma_{ZZ}^{-1}}  \widehat{\Sigma_{ZY}}   \Big\|_{HS}^2 
-2\sum_i \big< \widehat{\Sigma}_{XY} \varphi_i,   \widehat{\Sigma_{XZ}}  \widehat{\Sigma_{ZZ}^{-1}}  \widehat{\Sigma_{ZY}}\varphi_i\big>
 \nonumber \end{eqnarray}
where we recall that $\{\varphi_i \}_{i\in \N}$ is an orthonormal basis of $ V_y$. The double product term is denoted as $P$. It reads
\begin{eqnarray}
P &:=&\sum_i \big< \widehat{\Sigma}_{XY} \varphi_i,   \widehat{\Sigma_{XZ}}  \widehat{\Sigma_{ZZ}^{-1}}  \widehat{\Sigma_{ZY}}\varphi_i\big> \nonumber\\
&=&\frac{1}{N}\sum_{i,k}\big< \tilde{K}_y(.,y_k) ,  \varphi_i\big>
\big< \tilde{K}_x(.,x_k) , \widehat{\Sigma_{XZ}}  \widehat{\Sigma_{ZZ}^{-1}}  \widehat{\Sigma_{ZY}}\varphi_i\big> \nonumber\\
&=& \frac{1}{N^2}\sum_{i,k,l}\big< \tilde{K}_y(.,y_k) ,  \varphi_i\big>\big< \tilde{K}_y(.,y_l) ,  \varphi_i\big>
\big< \tilde{K}_x(.,x_k)  , \widehat{\Sigma_{XZ}}  \widehat{\Sigma_{ZZ}^{-1}} \tilde{z}_y(.,z_l)\big> \nonumber \\
&=& \frac{1}{N^2}\sum_{k,l,m,n}(\widetilde{\vK}_{y})_{kl} (\widetilde{\vK}_{r,z}^{-1})_{lm}
(\widetilde{\vK}_{x})_{kn}(\widetilde{\vK}_{z})_{mn} \nonumber\\
&=& \frac{1}{N^2} \trace\Big(   \widetilde{\vK}_{y} \widetilde{\vK}_{r,z}^{-1} \widetilde{\vK}_{z}
\widetilde{\vK}_{x} \big)
\end{eqnarray}
Carrying the same calculation for the last term allows to obtain 
\begin{eqnarray}
\Big\|  \widehat{\Sigma}_{XY|Z}  \Big\|_{HS}^2 &=& \frac{1}{N^2}
\trace\Big( \widetilde{\vK}_{x}\widetilde{\vK}_{y}   -
2 \widetilde{\vK}_{y} \widetilde{\vK}_{r,z}^{-1} \widetilde{\vK}_{z }\widetilde{\vK}_{x} \\ & +& 
 \widetilde{\vK}_{y}\widetilde{\vK}_{z } \widetilde{\vK}_{r,z}^{-1}  \widetilde{\vK}_{x} \widetilde{\vK}_{r,z}^{-1} \widetilde{\vK}_{z }\Big)    
\end{eqnarray}

If  the normalized version $V_{XY|Z}=V_{XY}- V_{XZ}V_{ZY}$ is used,  the estimator of the Hilbert-Schmidt norm $\big\| V_{XY|Z} \big\|^2_{HS}$ evaluated using the empirical estimate $\widehat{V}_{XY|Z} =\widehat{\Sigma}_{r,XX}^{-1/2}\widehat{\Sigma}_{XY|Z}    \widehat{\Sigma}_{r,YY}^{-1/2}$ is given by 
\begin{eqnarray}
\Big\|    \widehat{V}_{XY|Z}  \Big\|_{HS}^2 \!\!\!&=& \!\!\!
\trace\Big(\vN_{x}\vN_{y}   -
2 \vN_{y} \vN_{z }\vN_{x} + 
 \vN_{y}\vN_{z } \vN_{x} \ \vN_{z }\Big)    
\end{eqnarray}
where $\vN_u$ is the normalized centered Gram matrix for variable $u=x,y$ or $z$, and reads 
$\vN_u=\widetilde{\vK}_{u}  \widetilde{\vK}_{r,u}^{-1} $.
The proof follows the same line as before. This estimator has been shown to converge to $\Big\|  {\Sigma}_{XY|Z}  \Big\|_{HS}^2 $ in probability in \cite{FukuGSS07}. To obtain the result, the regularization parameter $\lambda$ must of course depend on $N$ and goes to zero at an appropriate rate. Refer to \cite{FukuBJ04,FukuBJ09,FukuGSS07} for results concerning the consistency of all the estimates seen so far. 

Following theorem \ref{condindep:th}, the use of these measures to assess conditional independence is not enough.
The extension of the random variables including $Z$ must be considered. The measure to be tested is thus $\Big\|  \widehat{V}_{(X Z)Y|Z}  \Big\|_{HS}^2$.

\noindent {\bf A simple illustration.} The simplest toy example to illustrate conditional HSIC is to test that three random variables
$X,Y,Z$ constitute a Markov chain, {\it e.g. } $X$ and $Z$ are independent conditionally to $Y$. Consider the simple generative model
\begin{eqnarray*}
X&=&U_1\\
Y&=& a (X^2-1) +U_2\\
Z &=& Y + U_3
\end{eqnarray*}
where $U_1,U_2,U_3$ are three independent zero mean, unit variance  Gaussian random variables, and $a\in \R$ is a coupling parameter. $N=512$   independent and identically distributed samples $(x_i,y_i,z_i)$ were generated and used to estimate $\Big\|  \widehat{V}_{(X Y) Z|Y}  \Big\|_{HS}^2$ using the equations above.

 Practically with finite length data, 
the distribution of the measure is not known under both hypothesis  $H_0$ :  Markov and $H_1$: non Markov. To simulate  the null hypothesis $X-Y-Z$ is a Markov chain, equivalently, $X,Z$ are independent conditionally to $Y$ or $\Big\|  \widehat{V}_{(X Y) Z|Y}  \Big\|_{HS}^2=0$ , we use random permutations of the realizations of one variable, say $X$. This is usually done for independence test, since randomly permuting preserve empirical distributions. However here, some
care must be taken because the distributions that must be preserved under permutations are the conditional distributions. Thus, to create the permuted data,  the range of the conditioning variable $Y$ is partitioned into $L$ domains 
$Y_1,\ldots,Y_L$ such that each domain contains the same number of observations. 
The permuted observations $\tilde x_i$ are obtained {\it per } domain, {\it i.e. }, $\forall l=1,\ldots, L$, $\tilde x_j=x_{\sigma(j)}$ for those $j$s such that $y_j \in Y_l$, where $\sigma(.)$ is a random permutation of these $j$s.
For this toy problem, 100 permuted data sets were created. This allows to find the threshold corresponding to 
a $5\%$ false alarm probability (level of the test). 

Figure \ref{test-markov:fig} displays the result of the simulation for this simple example. We test the three possibilities of having the  Markov property among $X,Y,Z$. The coupling parameter $a$ is varied from 0 to 1. The plot displays the conditional HSIC measure, as well as the threshold that ensures at most  5$\%$ of false alarms (evaluated as mentioned above
with $L=8$ domains.) In the left plot,
the squared norm of $\widehat{V}_{(X Z) Y|Z} $ is plotted to test 
the Markov property  for $X-Z-Y$. In the right plot, the squared norm of $\widehat{V}_{(X Y) Z|Y} $ is displayed to test if $X-Y-Z$ is a Markov chain. Finally, in the middle plot, the squared norm of $\widehat{V}_{(Y X) Z|X} $ is plotted to test 
the Markov property  for $Y-X-Z$. In the left plot for $a=0$, $X$ and $Y$ are independent and $X-Z-Y$ is a particular  
Markov chain. However, as soon as $a>0$, $X-Z-Y$ is not a Markov chain, and this is correctly inferred by the measure for $a$ as small as 0.1. The fact that $Y-X-Z$ is not a Markov chain is clearly assessed also, as illustrated in the middle plot. Finally, the Markov case $X-Y-Z$ is also correctly assessed in the right plot, since the squared norm of 
$\widehat{V}_{(X Y) Z|Y} $ is always below the threshold insuring a $5\%$ level test.
\begin{figure}[t]
\centering
\includegraphics{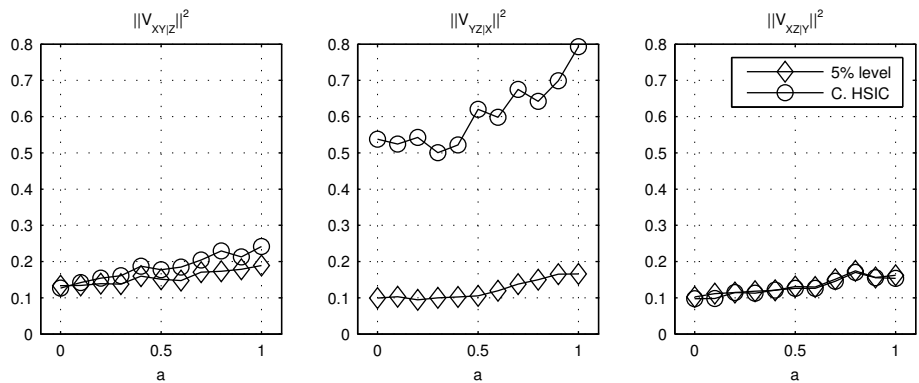}
\caption{Hilbert-Schmidt norms of conditional covariance operator to assess conditional independence. Diamond represent the threshold insuring a 5$\%$ level test.
Left plot:  Testing for the chain $X\rightarrow Z\rightarrow Y$. Middle plot:  Testing for the chain $Y\rightarrow X\rightarrow Z$.  Right plot: Testing for the chain $X\rightarrow Y\rightarrow Z$. For the two leftmost plots, the measures
is above the threshold meaning that the alternative $H_1$ is chosen by the test, except for $a=0$ in the case 
$X\rightarrow Z\rightarrow Y$. These results are perfectly consistent with the simple model used. In the right plot, the measure is always below the diamond curves, meaning that $H_1$ is rejected, and that $X-Y-$ is Markov, in agreement with the model.}
\label{test-markov:fig}
\end{figure}

\section{Kernel Bayesian filtering}

The idea developed mainly by Song, Fukumizu and Gretton is to transfer into reproducing kernel Hilbert spaces the manipulation of probability measures used in inference, namely the sum rule, the chain rule and their application in Bayesian inference \cite{FukuSG13,SongFG13}.  Since conditional distribution may be embedded into RKHS, it seems natural to seek for the generalization of Bayesian inference in RKHS.

The starting point is   theorem \ref{indepcondeqm:th} which states that the conditional covariance operator $\Sigma_{YY|X}:  V_y \rightarrow  V_y$
is deeply linked to optimal estimation of random variables of $ V_y$ from transformation of random variables in $ V_x$. Precisely,
 $\Sigma_{XX}  E[g(Y) | X=.] = \Sigma_{XY} g$ provided that the function $E[g(Y) | X=.]$ belongs to $ V_x$, a fact that is not guaranteed for any kernel, but a fact  assumed to be true in the sequel (see \cite{FukuBJ04} for a sufficient condition.) Recall that the conditional covariance operator is defined as 
\begin{eqnarray}
\Sigma_{YY|X} = \Sigma_{YY} - \Sigma_{YX} \Sigma_{XX}^{-1} \Sigma_{XY}
\end{eqnarray}
a formula which assumes the invertibility of the covariance operator in $ V_x$, and which exactly match the conditional covariance formula for Gaussian finite dimensional vectors.

The expression of the conditional mean allows to obtain an explicit form for the conditional kernel mean, that is, for the embedding of the conditional distribution. 
Let $\mu_X=E[K_x(.,X)]$ and $\mu_Y=E[K_y(.,Y)]$ be the embeddings of $X$ and $Y$ respectively in $ V_x$ and $ V_y$. The 
two embeddings are linked by
\begin{eqnarray}
\mu_Y= \Sigma_{YX}\Sigma_{XX}^{-1} \mu_X
\label{muydef:eq}
\end{eqnarray}
To show this relation,  the conditional expectation of $g(Y)$ given $X$ which satisfies
$
E[g(Y) | X=.]= (\Sigma_{XX}^{-1} \Sigma_{XY} g) (. ) 
$
is used in the following set of equations, valid for all $g\in  V_y$,
\begin{eqnarray}
\big<  \Sigma_{YX} \Sigma_{XX}^{-1} \mu_X ,   g \big>_{ V_y} 
&=& \big<  \mu_X ,  \Sigma_{XX}^{-1}  \Sigma_{XY} g \big>_{ V_x} \nonumber \\
&=&\big<  \mu_X ,  E[g(Y) | X] \big>_{ V_x} \nonumber \\
&=& E_X[E[g(Y) | X] ]  \nonumber \\
&=& E[g(Y) ] \nonumber  \\
&=& \big< \mu_Y ,  g \big>_{ V_y}
\end{eqnarray}

In particular, since $\mu_Y=E_Y[K_y(.,Y)]= E_XE_{Y|X}[K_y(.,Y)]=E_X[\mu_{Y|X}]$, setting  $P(dX)=\delta_{x}(dX)$ in (\ref{muydef:eq}) allows to obtain the conditional kernel mean as 
\begin{eqnarray}
\mu_{Y|x}(.)=E_{Y|X=x}[K_y(., Y)]& = & \Sigma_{YX}\Sigma_{XX}^{-1} K_x(.,x) \nonumber\\
&:=&\Sigma_{Y|X}  K_x(.,x)
\end{eqnarray}
Note that the embedding $\mu_{Y|x}(.)$ is a function which belongs to $V_y$.

In these formula, the couple $X,Y$ is distributed according to the joint probability $P(X,Y)$. The covariance operator $\Sigma_{YX}$ is defined as 
the expected value of the tensor product $K_y(.,Y)\otimes K_x(.,X)$ over the joint probability. Recall that it has three interpretations. The first  considers the covariance  as a tensor, it is to say 
a bilinear functional over the product $V_y \times V_x$ (precisely their duals). The second well-know fact  is based on the very definition of the tensor product $(K_y(.,Y)\otimes K_x(.,X))(g,f) = \left< g(.)  \left|\right. K_y(.,Y)  \right>\left< f(.)  \left|\right. K_x(.,X)  \right>$, which allows to write $\Sigma_{YX}f = E[\left< f(.)  \left|\right. K_x(.,X)  \right>K_y(.,Y)]$ and to consider
$\Sigma_{YX}$ as a linear operator from $V_x$ to $V_y$. The third interpretation considers $\Sigma_{YX}$ as the embedding of the joint probability into the tensor product space
$V_y\otimes V_x$. Since under this interpretation the covariance $\Sigma_{XX}$ can be seen as the embedding of $P(X)$ into the tensor product $V_x \otimes V_x$, this point of view allows to consider $\Sigma_{YX}\Sigma_{XX}^{-1} $ as  a representation of 
\begin{eqnarray}
P(Y|X)=\frac{P(X,Y)}{P(X)} =\frac{P(X|Y)  \int P(dX,Y) }{ \int P(X,dY)}
\end{eqnarray}
Writing the conditional kernel embedding as  $\mu_{Y|x}(.)= \Sigma_{YX}\Sigma_{XX}^{-1} K_x(.,x)$ is at the root of the embedding of Bayesian inference into RKHS. It can be seen as the embedding of Bayes law when the likelihood is $P(X|Y)$ and the {\it prior } probability is $P(Y)=\int P(dX,Y)$. However,
in Bayesian inference, if the likelihood is generally given, the {\it prior } is not given and in general not equal to the marginal of a given joint probability distribution. 

Thus for Bayesian inference,   the previous formula for the conditional embedding can be used, but for 
\begin{eqnarray}
P(Y|X) =\frac{Q(X,Y)  }{ \int Q(X,dY)} 
=  \frac{P(X|Y)\pi(Y) }{ \int Q(X,dY)}
\end{eqnarray}
The joint probability to be considered is no longer $P(X,Y)=P(X|Y)P(Y)$ but 
instead $Q(X,Y)=P(X|Y)\pi(Y)$. The embedding of the {\it a posteriori} probability is then given by 
\begin{eqnarray}
\mu^\pi_{Y|x}  = E^\pi[K_y(., Y)| x] =  \Sigma^\pi_{YX}\Sigma_{XX}^{\pi-1} K_x(.,x)
\end{eqnarray}
where  the superscript $\pi$ reminds that the {\it a priori} probability is $\pi$ instead of $\int P(dX,Y) $.

It is possible to relate $\Sigma^\pi_{YX}$  (or its adjoint $ \Sigma^\pi_{XY} $) and $\Sigma_{XX}^{\pi-1}$ to embeddings evaluated
on the joint probability $P$, using the following set of equations
\begin{eqnarray}
\Sigma^\pi_{XY} &=& E_Q[ K_x(.,X) \otimes K_y(.,Y) ]\nonumber \\
&=& E_\pi\left[  E[   K_x(.,X) \otimes K_y(.,Y) \big|  Y] \right] \nonumber\\
&=& E_\pi\left[E[   K_x(.,X) \big|Y] \otimes K_y(.,Y) \right]\nonumber \\
&=& E_\pi\left[\mu_{X|Y} \otimes K_y(.,Y) \right] \nonumber\\
&=&E_\pi\left[\Sigma_{X|Y} K_y(.,Y) \otimes K_y(.,Y) \right] \nonumber\\
&=& \Sigma_{X|Y} \Sigma^\pi_{YY}
\end{eqnarray}
The second line in this equation can also be interpreted as the average of the embedding of $P(Y,X |   Y)$: this interpretation offers an alternative expression as 
\begin{eqnarray}
\Sigma^\pi_{XY} &=& E_\pi\left[  E[   K_x(.,X) \otimes K_y(.,Y) \big|  Y] \right]\nonumber \\
&=& E_\pi\left[\mu_{XY|Y} \right]\nonumber \\
&=&E_\pi\left[\Sigma_{XY|Y} K_y(.,Y) \right] \nonumber\\
&=& \Sigma_{XY|Y} \mu^\pi_{Y}
\end{eqnarray}
Likewise, the covariance operator reads
\begin{eqnarray}
\Sigma_{XX}^\pi &= &\Sigma_{XX|Y} \mu_{Y}^\pi
\end{eqnarray}

\noindent {\bf Interpretations.} The operators $\Sigma^\pi_{XY}$ and $\Sigma^\pi_{XX}$ have simple interpretations when considered as embeddings.
$\Sigma^\pi_{XX}$ corresponds to the embedding of the law $Q(X)= \int P(X|Y) \pi (dY)$ into the tensorial product $ V_x\otimes  V_x$. 
$\Sigma^\pi_{XY}$ is the embedding into $ V_x\otimes  V_y$ of $Q(X,Y)=P(X|Y)\pi(Y)$.
Thus $\Sigma^\pi_{XX}$ can be seen as the embedding of the sum rule, and is thus called {\bf kernel sum rule}, whereas 
$\Sigma^\pi_{XY}$ is the embedding of the chain rule, and is thus called {\bf kernel chain rule}.
Obviously, Bayesian manipulation are a succession of applications of these rules. \\

To sum up, the  embedding of the {\it a posteriori} probability reads
\begin{eqnarray}
\mu_{Y|x} \quad =  \quad \Sigma^\pi_{YX}\Sigma_{XX}^{\pi-1} K_x(.,x)  
\quad= \quad(\Sigma^\pi_{XY})^\top \Sigma_{XX}^{\pi-1} K_x(.,x)  
\label{bayesrule:eq}
\end{eqnarray}
where
\begin{eqnarray}
\mbox{\bf Chain rule} &:&
\left\{
\begin{array}{ccccc}
\Sigma^\pi_{XY} &=&\!\!  \Sigma_{X|Y} \Sigma^\pi_{YY} &=&  \Sigma_{XY} \Sigma^{-1}_{YY} \Sigma^\pi_{YY}  \\
\mbox{or} &=& \Sigma_{XY|Y} \mu^\pi_{Y}  &=& \Sigma_{(XY)Y}\Sigma^{-1}_{YY}  \mu^\pi_{Y}
\end{array}\right.\label{chainrule:eq}  \\
\mbox{\bf Sum rule}&:&\!\! \Sigma_{XX}^\pi  = \Sigma_{XX|Y} \mu_{Y}^\pi\quad = \quad\Sigma_{(XX)Y}\Sigma^{-1}_{YY}   \mu_{Y}^\pi
\label{sumrule:eq}
\end{eqnarray}

\noindent {\bf Estimators.} Estimators  for 
\begin{eqnarray}
\mu_{Y|x}&=&  \Sigma^\pi_{YX}\Sigma_{XX}^{\pi-1} K_x(.,x)  \\
\Sigma^\pi_{XY} &=& \Sigma_{(XY)Y}\Sigma^{-1}_{YY}  \mu^\pi_{Y}  \\
\Sigma_{XX}^\pi &=& \Sigma_{(XX)Y}\Sigma^{-1}_{YY}   \mu_{Y}^\pi
\end{eqnarray}
are obtained using empirical estimators of the different covariance operators. 
The last two operators are seen as linear operator from $V_y$ into respectively $ V_x\otimes  V_y$
and $ V_x\otimes  V_x$. 
Let us find an estimator for $\Sigma^\pi_{XY}$. The other will be obtained immediately by replacing $\Sigma_{(XY)Y}$ with $\Sigma_{(XX)Y}$.

The estimators are based on the observation of $N$ i.i.d. samples $(x_i,y_i)$ of the couple $(X,Y)$. We denote by $\vK_x$ and $\vK_y$ the Gram matrices evaluated on this sample. 
Furthermore, since   information about the {\it prior } is needed,   a number $N_\pi$ of i.i.d. 
samples $Y^\pi_i$ from the {\it prior } $\pi$ are assumed to be observed. This seems a strange assumption, but in many situations, these samples are at hand. For example, in recursive nonlinear filtering, the {\it posterior } probability at a particular  time step serves as {\it prior } probability for the following time step. This will  be detailed   in the next section. 
The estimator for the function $ \mu^\pi_{Y}  $  is written as
\begin{eqnarray}
\mu^\pi_{Y}(.)= \sum_{i=1}^{N_\pi} \gamma_i K_y(., Y^\pi_i)
\end{eqnarray}
Let $\vmu^\pi_Y$ the vector containing the $ \mu^\pi_{Y}(y_k)$.
Using the results of \S\ref{matrixrep:ssec},  we know that
$\Sigma^{-1}_{YY}   \mu_{Y}^\pi(.)=\sum_i \beta_i K_y(.,y_i)$ where
\begin{eqnarray}
\vmu^\pi_Y = \frac{1}{N}(\vK_y +N\lambda I)\vK_y \vbeta
\end{eqnarray}
%
%
%
%
%
Applying $\Sigma_{(XY)Y}$ to $\Sigma^{-1}_{YY}  \mu^\pi_{Y} $ leads to
\begin{eqnarray}
\Sigma^\pi_{XY} &=&\frac{1}{N} \sum_{ij}    \beta_i \vK_{y,ij} K_x(., x_j)\otimes K_y(.,y_j) \nonumber  \\
        &=& \sum_j \mu_j  K_x(., x_j)\otimes K_y(.,y_j)  \mbox{ where }\\
    \vmu &=&   (\vK_y +\lambda I)^{-1}  \vmu^\pi_Y
\end{eqnarray}
Likewise
\begin{eqnarray}
\Sigma^\pi_{XX}         &=& \sum_j \mu_j  K_x(., x_j)\otimes K_y(.,x_j)  \mbox{ where }\\
    \vmu &=&   (\vK_y +\lambda I)^{-1}  \vmu^\pi_Y
\end{eqnarray}
 To get an estimate for $\mu_{Y|x}$ note that $\Sigma^\pi_{YX}=\Sigma_{XY}^{\pi,\top} =  \sum_j \mu_j  K(., y_j)\otimes K(.,x_j) $.
 Since $\Sigma_{XX}^\pi$ is not insured to be positive definite,   the regularization $(\Sigma^2+\varepsilon I)^{-1} \Sigma$ of  the inverse is used.
  Doing as above, searching for $\mu_{Y|x}(.)=\sum_j \zeta_j(x) K_y(.,y_j)$,   the vector
  \begin{eqnarray}
\vzeta (x)&= &\Lambda \left(  (\vK_x \Lambda )^2 + \varepsilon I\right)^{-1} \vK_x \Lambda \vk_X(x) \nonumber \\
&=& \Lambda\vK_x \left(  (\vK_x \Lambda )^2 + \varepsilon I\right)^{-1}  \Lambda \vk_X(x) 
\end{eqnarray}
is finally obtained, where  $\vk_X(x)=(K_x(x_1,x),\ldots, K_x(x_N,x))^\top$ and $\Lambda = \diag(\vmu)$ is a diagonal matrix, the diagonal  elements of which are the entries of $\vmu$. \\
Note that the matrix representation presented here has been shown to converge to the true embedding when the number of data goes to infinity, and when the regularization parameters goes to zero at correct speed (see \cite{FukuSG13}.)

Some application of kernel Bayes rule were presented in \cite{FukuSG13,SongFG13}, among which a kernel belief propagation for inference on graphs, Bayes inference problems with unknown likelihood (an alternative solution to Approximate Bayesian Calculation), and to filtering.
In the sequel,  kernel Bayes filtering is developed and applied to the prediction of time series.

%
%
%
%

\noindent {\bf Application in filtering.}
The problem of filtering is to estimate an hidden state $x_k$ from past observations $y_{1:k}:=(y_1,\ldots,y_k)$. Assuming the state is Markovian and the observation conditionally white, the solution
of the problem is given by the well-known recursion for the {\it a posteriori } probability
\begin{eqnarray}
p(x_k|y_{1:k} )  = \frac{p(y_k|x_k)p(x_k|y_{1:k-1}) }{\int p(y_k|x_k)p(x_k|y_{1:k-1}) dx_{k} }
\end{eqnarray}
which is nothing but  Bayes law where the {\it prior} probability  is $p(x_k|y_{1:k-1})$. Therefore,  kernel Bayes rules
can realize this recursion in a RKHS.
Let $m_{z,k|l}$ be the embedding of $p(z_k|y_{1:l} )$ in  $ V_z$ where $z$ is either $x$ or $y$, and $ V_z$ is the RKHS associated to kernel $K_z(., .)$.

Embedding the previous recursion in a RKHS amounts to applying kernel Bayes rule (\ref{bayesrule:eq})
with {\it prior} probability $p(x_k|y_{1:k-1})$ and likelihood $p(y_k|x_k)$, to obtain the embedding 
$m_{x,k|k}$ of the {\it posterior} probability.

Firstly,  the  link between the embedding  $m_{x,k|k-1}$ of the {\it prior} probability,  and  $m_{x,k-1|k-1}$ is  obtained by applying (\ref{muydef:eq}), or
\begin{eqnarray}
m_{x,k|k-1}  = \Sigma_{x_kx_{k-1}} \Sigma^{-1}_{x_{k-1}x_{k-1}}m_{x,k-1|k-1}
\label{maj:eq}
\end{eqnarray}

Then the kernel sum rule for $p(y_k|  y_{1:k-1}) = \int p(y_k|x_k)p(x_k|y_{1:k-1}) dx_{k} $ and the 
kernel chain rule for $ p(y_k|x_k)p(x_k|y_{1:k-1})$ have to be used. The application of (\ref{chainrule:eq}) and (\ref{sumrule:eq})  will respectively 
give the operators $c_{x,k|k} $ and $c_{yy,k|k-1}$ needed for kernel Bayes rule (\ref{bayesrule:eq}), 
\begin{eqnarray}
m_{x,k|k} = c_{x,k|k} c_{yy,k|k-1}^{-1} K_y(.,y_k)
\label{bayesfiltering:eq}
\end{eqnarray}
The operator  $c_{yy,k|k-1} $    satisfies, according to  the sum rule  (\ref{sumrule:eq}) 
\begin{eqnarray}
c_{yy,k|k-1}  = \Sigma_{(y_ky_k) x_k} \Sigma^{-1}_{x_kx_k} m_{x,k|k-1}
\label{opesum:eq}
\end{eqnarray}
whereas the operator $c_{x,k|k}$ is provided by the chain rule (\ref{chainrule:eq}), or 
\begin{eqnarray}
c_{x,k|k}=\Sigma_{y_kx_k}\Sigma^{-1}_{x_kx_k}m_{x,k|k-1}
\label{opechain:eq}
\end{eqnarray}
These last four equations provide the embedding of the optimal filtering solution into the RKHS $ V_x$.

To obtain a matrix representation for all these rules,  $N+1$ samples of the
couple $(x_k,y_k)$ are supposed to be observed. At time $k-1$  the kernel conditional mean is given by
\begin{eqnarray}
m_{x,k-1|k-1}(.)  = \sum_{i=1}^N \alpha^{k-1}_i K_x(., x_i)  =\vk_X(.) \valpha^{k-1} 
\end{eqnarray}
and therefore, the matrix representation of (\ref{maj:eq}) is given by 
\begin{eqnarray}
m_{x,k|k-1}(.)  &=& \vk_{X+}(.) (\vK_x +\lambda I)^{-1} \vK_x \valpha^{k-1}
\end{eqnarray}
where  $ \vk_{X+}(.) = (K_x(.,x_2), \ldots, K_x(.,x_{N+1}) )$ and $\vK_x$ is the Gram matrix built on $x_1,\ldots,x_N$. 

Then, the operator $c_{x,k|k}$ given by equation (\ref{opechain:eq}) has the representation
\begin{eqnarray}
c_{x,k|k}&=& \vk_Y(.) (\vK_x +\lambda I)^{-1}  \big(  m_{x,k|k-1} (x_1) , \ldots m_{x,k|k-1} (x_N)  \big)^\top \nonumber\\
&=&  \vk_Y(.) (\vK_x +\lambda I)^{-1}  \vK_{X X+} (\vK_x +\lambda I)^{-1} \vK_x \valpha^{k-1} \nonumber \\
&=&\sum_{i=1}^N \vmu^k_i K_y(., y_i) \quad \mbox{ where } \\
\vmu^k&=&(\vK_x +\lambda I)^{-1}  \vK_{X X+} (\vK_x +\lambda I)^{-1} \vK_x \valpha^{k-1}
\end{eqnarray}
Likewise the operator $c_{yy,k|k-1}$ in (\ref{opesum:eq}) has the representation
\begin{eqnarray}
c_{yy,k|k-1}  = \sum_i \vmu^k_i K_y(., y_i)\otimes K_y(., y_i)
\end{eqnarray}
Finally,   $m_{x,k|k}(.)  = \vk_X(.) \valpha^{k-1} $  where parameter $\valpha^k$ reads
 \begin{eqnarray}
\valpha^k&= & \Lambda^k \vK_y \left(  (\vK_y \Lambda^k )^2 + \varepsilon I\right)^{-1}  \Lambda^k \vk_Y(y_k) 
\end{eqnarray}
where   $\Lambda^k = \diag(\vmu^k)$.

To synthesize: the kernel conditional mean is given by
\begin{eqnarray}
m_{x,k|k}(.)  = \sum_{i=1}^N \alpha^{k}_i K_x(., x_i)  =\vk_X(.) \valpha^{k} 
\end{eqnarray}
where the vector $\valpha^k$ satisfies the recursion
\begin{eqnarray}
\vmu^k&=&(\vK_x +\lambda I)^{-1}  \vK_{X X+} (\vK_x +\lambda I)^{-1} \vK_x \valpha^{k-1}\\
\Lambda^k &=& \diag(\vmu^k)\\
\valpha^k&= & \Lambda^k \vK_y \left(  (\vK_y \Lambda^k )^2 + \varepsilon I\right)^{-1}  \Lambda^k \vk_Y(y_k) 
\end{eqnarray}
The matrix $(\vK_x +\lambda I)^{-1}  \vK_{X X+} (\vK_x +\lambda I)^{-1} \vK_x $ can obviously be pre-computed.
With the notation taken, the first useful time for a real estimation is $k=N+2$, since the first $N+1$ dates are used for learning.
To initialize, $\widehat{\pi}(x_{N+1})=E[K_x(.,x_{N+1})]=\Sigma_{xy}\Sigma^{-1}_{yy}K(.,y_{N+1})$  can be used, and thus $\valpha^{N+1}=(\vK_y+\lambda I)^{-1}\vk_Y(y_{N+1})$.

The outcome of the algorithm is an estimation of the embedding of the {\it a posteriori }  measure. If  an estimate
of $E[f(x_k)|y_1,\ldots, y_k]$ where $f\in  V_x$ is seeked for,    the definition $E[f(x_k)|y_1,\ldots, y_k]=\big< f ,  m_{k|k}\big>$ is applied.
However, if $f$ does not belong to the RKHS,  this can not be applied.
A possibility is to find the pre-image $x^k$ whose image $K_x(.,x^k)$ is the closest to the embedding of the {\it posterior} probability.
For radial kernel $K_x(.,x)=f(\|.-x\|^2)$ this can be solved efficiently if closeness is measured using the RKHS norm \cite{SchoS02}. Indeed,
searching for $\min_x \|  K_x(.,x) - \sum_i K_x(.,x_i)\alpha^t_i\|$ leads to the fixed point condition
$x=\sum_i x_i f'(\|x-x_i\|^2) \alpha^t_i / \sum_i f'(\|x-x_i\|^2) \alpha^t_i $. A solution can be obtained sequentially as
\begin{eqnarray}
x^t_n = \frac{\sum_i x_i f'(\|x^t_{n-1}-x_i\|^2) \alpha^t_i }{\sum_i f'(\|x^t_{n-1}-x_i\|^2) \alpha^t_i }
\end{eqnarray}
No convergence guarantees exist for this procedure, and {\it ad-hoc  } stategies are usually called for, such as running the algorithms with several different initial conditions, \ldots \\

\noindent{\bf Illustration in prediction.} An example of kernel Bayes filtering on a prediction problem is presented. In the example taken from  \cite{RichBH09}, a signal $z_n$ is generated using the following nonlinear autoregressive model
\begin{eqnarray}
z_n=\frac{(8-5e^{-z_{n-1}^2})z_{n-1}}{10} -\frac{(3+9e^{-z_{n-1}^2})z_{n-2} }{10}+\frac{\sin(\pi z_{n-1})}{10} + \varepsilon_n
\end{eqnarray}
where $\varepsilon_n$ is chosen from an i.i.d. sequence of zero mean Gaussian random variables of standard deviation  set to 0.1 in the illustration. A plot of the phase space $(z_{n-1},z_{n}) $ is depicted in the left plot in  figure \ref{kbrillustr:fig}. The learning set is composed of the first 512 samples of the signal $z_n$. The data set is composed  by the state $x_n=(z_n,z_{n+1})^\top$ and the observation is simply $y_n=z_n$. The state $x_n$ is estimated and 
$\widehat{z_{n+1}}$ is defined as the second coordinate of the estimated state. The parameters chosen for 
the simulation are not optimized at all. The kernels are Gaussian kernels with variance parameters set to 0.5. 
The regularisation parameters were set to $10^{-4}$.
\begin{figure}[htbp]
\centering
\includegraphics{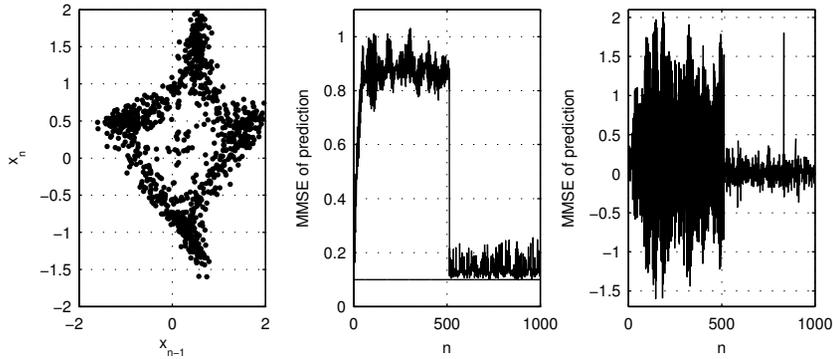}
\caption{Left plot: phase diagram of the time series used to illustrate kernel Bayes filtering. Middle plot: Square root of the mean square error of the predictor obtained bby averagin gover 200 snapshots. The first 512 sample represent the power of the signal to be estimated since this interval correspond to learning, and no estimationis performed: the estimator is set to zero during this period of time.  The horizontal line marks the amplitude 0.1 which corresponds to the standard deviation of the dynamical noise, and is therefore the optimal power to reach. Right plot: a particular snaphot. The outlier corresponds to a bad estimate by the pre-image algortihm. }
\label{kbrillustr:fig}
\end{figure}

As shown in the right plot of figure \ref{kbrillustr:fig}, the error $e_{n+1|n}=
z_{n+1}-\widehat{z_{n+1}}$ is almost equal to to the dynamical noise $\varepsilon_{n+1}$. 
The standard deviation of $e_{n+1|n}$ is estimated by averaging the above procedure over 100 snaphots; its value after convergence is  0.13, close to 0.1, the standard deviation of the dynamical noise. The square root of the mean square error of prediction is depicted in the middle plot. As recalled earlier learning is performed on the first 512 samples and estimation begins at time 514.  Therefore, the first 512 samples presented are respectively the signal $z_{n+1}$ in the right plot, and its power in the middle plot. 

Finally, the presence of an outlier at time around 850 in the right plot is interesting. It comes from a bad estimation of the pre-image. The algorithm to go back from the RKHS to the input space is very important to produce a good estimation. In the implementation used for this illustration, the convergence of the pre-image algorithm is controlled. If divergence occurs, or if no convergence (oscillations) occurs,  the algorithm is re-initialized and run again. The initial condition is randomly chosen. The outlier in the figure corresponds to a point which is just below the threshold used to decide of divergence. 

This example  illustrates the effectiveness of kernel Bayes rules here applied in prediction. Note that as developped,
kernel Bayes filtering is not  an on-line procedure {\it per se}, since learning is done on a fixed set of data. Thus the approach here should be compared with gaussian processes regression or kernel regression. All these methods will have the same level of quality. On-line learning with kernel Bayes rule remains to be developped, especially for cases in which the {\it a priori} and/or the observation are nonstationary. For these cases, on-line filtering approaches have been developed in a non Bayesian framework, as evoked in the following.

\section{On-line filtering using RKHS}

Filtering may be seen is an estimation problem. Before discussing on-line filtering in RKHS it is worth presenting some consideration of estimation in RKHS.

\noindent {\bf On estimation in  RKHS.} Consider for example the following problem which is at the root of filtering. Let $y$ be a real square integrable random variable,  and consider using this random variable to estimate  another random variable, say $X$ with values in $\X$. Minimizing the mean square error 
$E[(Y-h(X))^2]$ is  often used. With the only constraint that $h$ is measurable, the optimal function $h$ is given by $h(X)=E[Y|X]$, the conditional mean. 

Usually the conditional mean is very difficult to evaluate so that the function $h$ is searched for in some more constrained classes of functions. If $h$ is restricted to be a linear functional, the solution is Wiener filter. For example, if $\X=\R^n$,  the vector $\vh$  which minimizes $E[(Y-\vh^\top X)^2]$ is seeked for. The solution satisfies the well known equation $E[YX]=E[XX^\top] \vh$. Here, the problem is formalized and solved when functions $h$ is searched for in  a RKHS whose kernel is a kernel on $\X$. This corresponds to the important situation where the optimal filter $h$ is as a  linear functional of the  transformed observation (by the kernel.) 

Thus let $K$ be a kernel on $\X$ and $\cal H$ its associated reproducing kernel Hilbert space of functions from $\X$ to $\R$. 
The best estimator satisfies
\begin{eqnarray}
f_0= \arg \min_{f\in \cal H} E\left[ \left(Y - f(X) \right)^2  \right]
\end{eqnarray}
Let $R(f) =E\left[ \left( Y - f(X) \right)^2  \right]$. To find the minimun, a necessary condition is to set to zero 
the Gateaux derivative in every possible direction  \cite{DebnM05}, {\it i.e.} since $R$ is a functional, to solve
\begin{eqnarray}
\frac{dR(f+\varepsilon \varphi)}{d\varepsilon}\Big|_{\varepsilon=0} =0 , \quad \forall \varphi \in  V
\end{eqnarray}
This is equivalent to 
\begin{eqnarray}
 \Sigma_{YX}   \varphi &=&\big<  \varphi  ,   \Sigma_{XX} f\big>   , \quad \forall \varphi \in  V   
\end{eqnarray}
where 
\begin{eqnarray}
 \Sigma_{XX} :   V &\longrightarrow &  V \nonumber \\
                f  &\longmapsto& \Sigma_{XX}f:= E[ \big< f ,  K(., X) \big> K(., X) ]  \\
                \Sigma_{YX} :   V &\longrightarrow & {\R} \nonumber \\
                f  &\longmapsto& \Sigma_{YX}f:= E[ \big< f ,  K(., X) \big> Y ]   
\end{eqnarray}
are the covariance and the correlation operators.  The correlation operator is in this particular case a bounded linear functional and thanks to the Riesz representation theorem can be written as 
$\Sigma_{YX}f:=\big< f ,  E[YK(., X)] \big> $. Then the optimal function is found if we solve
\begin{eqnarray}
 \big<\varphi   , E[YK(., X)]  \big>   =\big<  \varphi  ,   \Sigma_{XX} f\big>  \quad \forall \varphi \in  V
\end{eqnarray}
Since this is valid for any $\varphi$ the solution is given by the function that solves 
$ \Sigma_{XX} f =E[YK(., X)]$.  Practically of course, the covariance and correlation operators have to be estimated from a finite set of data. Let $(y_i,x_i)$ these data. From the representer theorem we know that the functions are to be searched for in the subspace of $\cal H$ generated by the $K(.,x_i)$. Then the preceding equation has an empirical counterpart which reads 
\begin{eqnarray}
 \big<\sum_i \alpha_i K(., x_i) , \sum_j y_j K(., x_j)]  \big> &  =&\valpha^T K_X K_X \vbeta, \quad   \forall \valpha \\
 \valpha^T K_X \vy &  =&\valpha^T K_X K_X \beta  , \quad \forall \valpha
 \end{eqnarray}
where $K_X$ is the Gram matrix, and $f(.) := \sum_j \beta_j K(., x_j)$. This is  the same solution as the regression case in machine learning, as developed in the following paragraph.\\

\noindent {\bf Filtering in a RKHS.}
Filtering in a RKHS is a particular problem of regression. The data $(y_i,x_i)$ considered above can be seen as observations
that can be explained using a regression $Y=f(X)$ where $f$ is searched for in a RKHS. From a signal processing perspective, 
$y_i$ may represent the value of signal $y$ at time $i$, whereas vector $x_i$ contains past samples of another signal. $y=f(X)$ is thus considered as a black box model, and the aim of the optimal estimation is to identify $f(.)$.

Thus, let $y_n$ and $z_n$ be  two real valued random signals, and suppose we look for a  function   which minimizes the power of the error $y_n-f(x_n)$, where $x_n=(z_n, z_{n-1},\ldots,z_{n-d+1})^\top \in \R^d$. $\R^d$ is embedded into a RKHS using a kernel $K$ on $\R^d$, and the optimizing function is seeked for in the RKHS.  The optimization is realized on a set of $N$ observed data $(y_i,x_i)$.
Furthermore, since most of the interesting RKHS are of either high or infinite dimension, overfitting is likely to occur,
and some regularization must be included. The norm in the RKHS is used to constrain the function.
Thus, the optimization problem can be written as
\begin{eqnarray}
f_0(.)= \arg \min_{f\in  V} \sum_i \Big| y_i-f(x_i)\Big|^2 +\lambda \Big\|    f \Big\|^2_ V
\end{eqnarray}
The representer theorem \cite{KimeW70,SchoS02} states that the optimizing function $f_0$ belongs to the the subspace of
$ V$ generated by $\{K(., x_i)\}, i=1,\ldots,N$. If $f_0=\sum_i \alpha_{0,i} K(.,x_i)$, the  preceding program
is equivalent to 
\begin{eqnarray}
\valpha_0= \arg \min_{\valpha \in \R^N}  (\vy_i-\vK_x \valpha)^\top(\vy_i-\vK_x \valpha) +\lambda \valpha^\top \vK_x \valpha
\end{eqnarray}
the solution of which is given by 
\begin{eqnarray}
\valpha_0 = (\vK_x + \lambda \vI)^{-1}\vy
\end{eqnarray}
Then when a new datum $x_k$ is observed, $k>N$, the corresponding estimate $y_k$ writes
\begin{eqnarray}
y_k = \sum_{i=1}^N \alpha_{0,i} K(x_k, x_i) = \vK_x(x_k)^\top ( \vK_x + \lambda \vI)^{-1}\vy
\end{eqnarray}
where $ \vK_x(x_k)^\top = \big( K(x_k, x_1), \ldots,K(x_k, x_N) \big)$. This regularized solution is also known as the ridge regression. 
The result may appear strange since in the linear case it reads $y_k = x_k^\top  \vX ( \vX \vX^\top +\lambda \vI)^{-1} \vy$
where $\vX$ is the design matrix $(x_1,\ldots,x_N)^\top$. But the well known trick $ \vX ( \vX \vX^\top +\lambda \vI)^{-1}=
( \vX^\top \vX +\lambda \vI)^{-1}\vX^T$ allows to recover the usual Wiener filtering form
$y_k = x_k^\top  ( \vX^\top \vX +\lambda \vI)^{-1}  \vX^\top\vy$. \\

\noindent {\bf On-line filtering.}
As an application  the problem of on-line filtering is considered. The aim here is to use the previous filter in situations where data arrives in streams or on-line, and can not be processed in batch mode. Thus a recursive structure is needed to refresh the filter when a new datum is acquired, so as to produce a new estimate without the need of re-calculating. This problem has seen some important developments in the last decade, with the 
presentation of the kernel Recursive Least Square (kRLS) algorithm \cite{EngeMM04}, followed by many works such as those reviewed recently in  \cite{SlavBT14}.

The biggest difficulty for on-line kernel algorithms lies in the fact that they inherently manipulates Gram matrices whose sizes grow linearly with the size of the data. Thus, on-line kernel algorithms necessarily include a sparsification procedure which allow to approximate  the Gram matrices needed as well as possible. For a recent thorough review of on-line kernel algorithms for signal processing problems, we refer to \cite{SlavBT14}. Here,
the kRLS in which the sparsification is the Approximate Linear Dependence criterion is presented. The kLMS which uses the coherence criterion as sparisfication criterion is proposed as a simplified alternative. We already used the coherence criterion in \S\ref{testindep:ssec}.

The sparsification procedure is based on a recursive building of a dictionary using the ALD. Initializing the dictionary ${\cal D}_1={1}$ with the first datum acquired $x_1$, the rule to create the dictionary is 
\begin{eqnarray}
{\cal D}_n \! = \! \left\{
\begin{array}{ll}
 \!  \! {\cal D}_{n-1} \cup \{n\}  & \!  \!  \!\!\!\mbox{ if } K(.,x_n) \mbox{ is not ALD of }   \{ K(., x_\alpha) \}_{\alpha \in {\cal D}_{n-1}  }\\
 \!  \! {\cal D}_{n-1} &  \!  \!  \!\!\!\mbox{ otherwise}
\end{array}
\right.
\end{eqnarray}
Approximate linear dependence is measured according to the mean square error in the prediction of 
$K(.,x_n)$ from the members $ K(., x_\alpha)$ of the dictionary at time $n-1$. If this error is greater than a user specified threshold,
the ALD is rejected, and $K(.,x_n)$ carries enough new information to be incorporated into the dictionary. 

Thus at each time $n$, the approximation is $\hat{k}(.,x_n)= \sum_{i=1}^{d_{n-1}}a_{n,i} K(.,x_{\alpha_i})$, where $d_n=|{\cal D}_n|$, and
where the coefficients are obtained as in the regression framework presented above, that is 
\begin{eqnarray}
\va_n &=& \widehat{\vK}_{n-1}^{-1} \vk_{n-1}(x_n)
\end{eqnarray}
and the minimum mean square error reached is given by  
\begin{eqnarray}
e_n &=& K(x_n,x_n) -  \vk_{n-1}(x_n)^\top \va_n \nonumber \\
&=&K(x_n,x_n) -  \vk_{n-1}(x_n)^\top\widehat{\vK}_{n-1}^{-1} \vk_{n-1}(x_n)
\end{eqnarray}
In these equations, $(\widehat{\vK}_{n-1})_{ij}=K(x_{\alpha_i},x_{\alpha_j})$ is the Gram matrix evaluated on the dictionary at time $n-1$ and
 $\vk_{n-1}(x_n)^\top=\big(K(x_{n},x_{\alpha_1}), \ldots,K(x_{n},x_{\alpha_{d_{n-1}}})\big)$.
 The test for ALD consists in comparing $e_n$ to a given value $e_0$. If the ALD is accepted or $e_n<e_0$, the dictionary is not modified
 and ${\cal D}_n={\cal D}_{n-1}$.
 If the ALD is rejected,  ${\cal D}_n={\cal D}_{n-1}\cup \{n\}$ and obviously $\hat{k}(.,x_n)=K(.,x_n)$.

Interestingly, this allows to obtain an approximation of the Gram matrix calculated over all the observation measured up to time $n$.
Indeed, since for all $n$, $\hat{k}(.,x_n)= \sum_{i=1}^{d_{n-1}}a_{n,i} K(.,x_{\alpha_i})$, the $(l, m)$ entry of the full Gram matrix reads
approximately
\begin{eqnarray}
K(x_l,x_m) &\approx&\big<  \hat{k}(.,x_l),  \hat{k}(.,x_m)\big>\\
&=& \sum_{i,j=1}^{d_{m-1}}a_{l,i}a_{m,j} K(x_{\alpha_i},x_{\alpha_j})
\end{eqnarray}
This supposes without loss of generality that $l\leq m$, and implicitely set $a_{l,i}=0$ as soon as $i>d_{l-1}$.
This allows to store the coefficient $a_{l,i}$ into a matrix  $\vA_l$ of  appropriate dimension and to write
\begin{eqnarray}
\vK_{n} \approx \vA_n \widehat{\vK}_{n} \vA_n^\top
\end{eqnarray}
The matrix $\vA_n$ is updated as $(\vA_{n-1}^\top \va_n^\top)^\top$ if ALD is accepted, and as 
\begin{eqnarray}
\left(\begin{array}{c|c}\vA_{n-1}& 0 \\\hline 0 & 1\end{array}\right)
\label{upAnon:eq}
\end{eqnarray}
if  ALD is rejected.

For  the problem of on-line regression, the following problem has to be solved
\begin{eqnarray}
\valpha_n= \arg \min_{\valpha }  (\vy_n-\vK_n \valpha)^\top(\vy_n-\vK_n \valpha) 
\end{eqnarray}
a program replaced by 
\begin{eqnarray}
\valpha_n= \arg \min_{\valpha }  (\vy_n- \vA_n \widehat{\vK}_{n} \valpha)^\top(\vy_n- \vA_n \widehat{\vK}_{n} \valpha) 
\end{eqnarray}
where  the substitution $\valpha \leftrightarrow \vA_n^\top \valpha $ has been made.
The optimal parameter $\valpha_n$ at time $n$ is thus given by the pseudo-inverse of $\vA_n \widehat{\vK}_{n}$ applied to $\vy_n$, or
after elementary manipulations
\begin{eqnarray}
\valpha_n=  \widehat{\vK}_{n}^{-1} (\vA_n^\top \vA_n)^{-1} \vA_n^\top \vy_n
\end{eqnarray}

This form allows an easy transformation into a recursive form. Basically, at time $n$, if ALD is accepted and the dictionary does not change ${\cal D}_n={\cal D}_{n-1} $, then $ \widehat{\vK}_n= \widehat{\vK}_{n-1}$, 
$\vA_n$ is updated as $(\vA_{n-1}^\top \va_n^\top)^\top$ and $\vP_n=(\vA_n^\top \vA_n)^{-1} $ is updated as usual as
\begin{eqnarray}
\vP_n=\vP_{n-1}-\frac{\vP_{n-1} \va_n\va_n^\top\vP_{n-1}}{1+\va_n^\top\vP_{n-1}\va_n}
\label{upPoui:eq}
\end{eqnarray}

However, if at time $n$ ALD is rejected, the dictionary is increased  ${\cal D}_n={\cal D}_{n-1} \cup \{n\}$, 
$\vK_n^{-1}$ is updated as 
\begin{eqnarray}
 \widehat{\vK}_n^{-1} &= &\left(\begin{array}{c|c} \widehat{\vK}_{n-1} & \vk_{n-1}(x_n) \\\hline \vk_{n-1}(x_n)^\top & K(x_n,x_n)\end{array}\right)^{-1} \nonumber \\
 &=&\frac{1}{e_n}
\left(\begin{array}{c|c} e_n  \widehat{\vK}_{n-1}^{-1}+\va_n \va_n^\top  & -\va_n \\\hline - \va_n^\top& 1 \end{array}\right)
\label{upKnon:eq}
\end{eqnarray}
$\vA_{n-1}$ updated according to (\ref{upAnon:eq}), and  $\vP_n$ follows 
\begin{eqnarray}
\vP_n= \left(\begin{array}{c|c}\vP_{n-1}& 0 \\\hline 0 & 1\end{array}\right)
\label{upPnon:eq}
\end{eqnarray}

Using some algebra then leads to the following kRLS algorithm. Initialize 
${\cal D}_1=\{1\}, \valpha_1=y_1/K(x_1,x_1)$ and then for $n\geq2$
\begin{enumerate}
\item $\va_n=\widehat{\vK}_{n-1}^{-1} \vk_{n-1}(x_n) $ and $e_n = K(x_n,x_n) -  \vk_{n-1}(x_n)^\top \va_n $
\item {\bf if} $e_n\leq e_0$, ${\cal D}_n={\cal D}_{n-1}$, update $\vP_n$ according to (\ref{upPoui:eq}),  $ \widehat{\vK}_n^{-1}=\widehat{\vK}_{n-1}^{-1}$ , $\vA_n=(\vA_{n-1}^\top \va_n^\top)^\top$
and set 
\begin{eqnarray}
\valpha_n = \valpha_{n-1}+ \frac{\widehat{\vK}_{n-1}^{-1}\vP_{n-1}\va_n}{1+\va_n^\top\vP_{n-1}\va_n}\big(y_n- \vk_{n-1}(x_n)^\top\valpha_{n-1}\big) 
\end{eqnarray}

{\bf else} ${\cal D}_n={\cal D}_{n-1}\cup \{n\}$, update $\vP_n$ according to (\ref{upPnon:eq}), $ \widehat{\vK}_n^{-1}$ according to (\ref{upKnon:eq}),  $\vA_{n}$  according to (\ref{upAnon:eq}) and set
\begin{eqnarray}
\valpha_n = \left\{
\begin{array}{l}
\valpha_{n-1}- \frac{\va_n}{e_n}\big(y_n- \vk_{n-1}(x_n)^\top\valpha_{n-1}\big) \\
\frac{1}{e_n}\big(y_n- \vk_{n-1}(x_n)^\top\valpha_{n-1}\big) 
\end{array}
\right.
\end{eqnarray}

\end{enumerate}

The complexity of the kRLS is dominated by inverting the matrices. If a lower complexity is required, simpler approaches may be used, such as the kernel Least Mean Square (kLMS) algorithm. Its expression is just written down, referring to \cite{LiuPP08,SlavBT14} for more details and variants. It uses a dictionary as well. However, since the ALD requires the propagation of the inverse of $\widehat{\vK_n}$, the criterion can not be used here because a motivation for the kLMS is to reduce complexity. Thus another criterion must be used. The simpler one up to today is the coherence criterion proposed in this context in \cite{RichBH09}, and that we detailed previously. If the new datum is not coherent enough with the dictionary it is included. Coherence is measured in the RKHS and therefore simply uses the kernel evaluated at the new datum and at the members of the dictionary. 
 The normalized kLMS  recursively and adaptively computes the coefficient vector according to the following steps:
 
 Initialize 
${\cal D}_1=\{1\}, \valpha_1=y_1/K(x_1,x_1)$ and then for $n\geq2$
\begin{enumerate}
\item $e_n =\max_{\alpha\in {\cal D}_{n-1} }    \big| K(x_n, x_\alpha)    \big| $

\item {\bf if} $e_n\geq e_0$, ${\cal D}_n={\cal D}_{n-1}$,  set 
\begin{eqnarray}
\valpha_n = \valpha_{n-1}+ \frac{\lambda \big(y_n- \vk_{n-1}(x_n)^\top\valpha_{n-1}\big) 
}{\varepsilon + \vk_{n-1}(x_n)^\top \vk_{n-1}(x_n) }  \vk_{n-1}(x_n)
\end{eqnarray}

{\bf else} ${\cal D}_n={\cal D}_{n-1}\cup \{n\}$, set
\begin{eqnarray}
\valpha_n = \left(
\begin{array}{l}
\!\! \! \valpha_{n-1}\!\!\!   \\\!\!\!  0\!\! \! 
\end{array}
\right)
+ \frac{\lambda \big(y_n- \vk_{n-1}(x_n)^\top\valpha_{n-1}\big) 
}{\varepsilon + \vk_{n}(x_n)^\top    \vk_{n}(x_n)} \left(
\begin{array}{l}
\!\!\! \vk_{n-1}(x_n) \!\! \! \\  \!\! \! K(x_n,x_n)\!\! \! 
\end{array}
\right)
\end{eqnarray}
where recall that $\vk_{n}(x_n)^\top=(K(x_{n},x_{\alpha_1}), \ldots,K(x_{n},x_{\alpha_{d_{n}}})$
\end{enumerate}
$\lambda $ is the step size parameter that controls as usual the variance-bias trade-off. A study of convergence of this algorithm as been proposed recently in the Gaussian case for Gaussian kernels \cite{ParrBRT12}.\\

\noindent {\bf An illustration.} The two algorithms are illustrated on the example developped for illustrating kernel Bayes filtering. In figure \ref{kRLSkLMSillustr:fig}  the left plot depicts the square root of the mean square error as a function of time. The kernel used is the Gaussian 
$\exp(-\| x-y\|^2/\sigma^2)$.
Parameters chosen are $\sigma^2=1/3.73$, $\lambda=0.09$, $\varepsilon=0.03$. These values were used in \cite{RichBH09} and are taken here for the sake of simplicity. 
 For the left plot, $e_0$ has been set to 0.55 for 
the kRLS and the kLMS as well. This choice was made since it ensures the dictionaries are of the same size on average for the two algorithms (27 for this value of $e_0$). The convergence curves were obtained by averaging 500 snapshots.
\begin{figure}[htbp]
\centering
\includegraphics{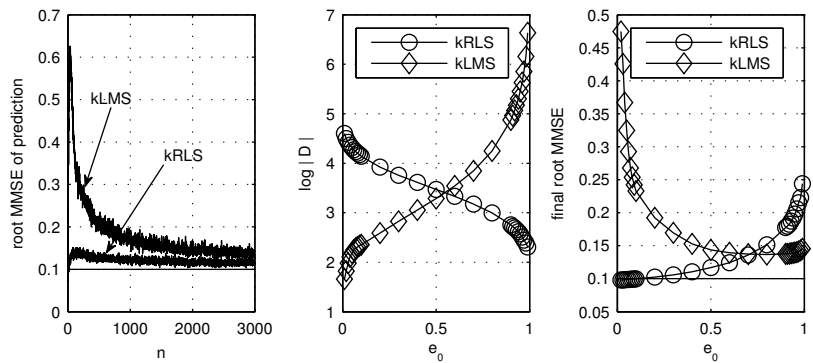}
\caption{Left plot:  Square root of the mean square error obtained by the kLMS and kRLS for the prediction problem studied in the text. The horizontal line marks the amplitude 0.1 which corresponds to the standard deviation of the dynamical noise, and is therefore the optimal power to reach. Middle plot:  Size of the dictionary at convergence as a function of parameter $e_0$.  Right plot: Square root of the mean square error obtained by the kLMS and kRLS at convergence as a function of parameter $e_0$.  }
\label{kRLSkLMSillustr:fig}
\end{figure}
 The middle plot displays the logarithm of the size of the dictionary after convergence as a function of $e_0$. The same parameter is used here but has a different meaning for the two algorithms. For the kLMS, the larger the parameter, the easier the dictionary is increased. For the kRLS, this goes the other way around. $e_0$ is varied from 0.01 to 1 in a non uniform way. The right plot shows the asymptotic mean square error (evaluated by averaging the last 500 samples of the convergence curves, these curves being obtained by averaging 500 snapshots). Interestingly with these two plots is the fact that the loss in MMSE is small for a high compression. For example, in the kRLS, the optimal performance of 0.1 is nearly obtained as soon as $e_0=0.1$ , for which the size of the dictionary is only 55!
 Likewise,  the best performance of the kLMS (about 0.13) is obtained for $e_0=0.7$ for which the size of the dictionary is 47.  This simple example show the effectiveness of the algorithms coupled with simple yet powerful sparsification techniques. These sparsification procedures open  up the use of kernel methods to very large data sets as well as on-line efficient algorithms.

\backmatter

\bibliographystyle{abbrv}
\bibliography{rkhs,rkhs2}

\end{document}